\title{The second rational homology group of the moduli space of curves with level structures}
\author{Andrew Putman}
\theoremstyle{plain}
\newtheorem{theorem}{Theorem}[section]
\newtheorem{proposition}[theorem]{Proposition}
\newtheorem{lemma}[theorem]{Lemma}
\newtheorem{corollary}[theorem]{Corollary}
\newtheorem{conjecture}[theorem]{Conjecture}
\newtheorem{question}[theorem]{Question}
\theoremstyle{definition}
\newtheorem{definition}[theorem]{Definition}
\theoremstyle{remark}
\newtheorem*{remark}{Remark}
\newtheorem*{example}{Example}
\newtheorem*{warning}{Warning}
\DeclareMathOperator{\Hom}{Hom}
\DeclareMathOperator{\Homeo}{Homeo}
\DeclareMathOperator{\Ker}{ker}
\DeclareMathOperator{\Mod}{Mod}
\newcommand\Torelli{\text{${\mathcal I}$}}
\DeclareMathOperator{\Sp}{Sp}
\DeclareMathOperator{\SL}{SL}
\newcommand\Moduli{\text{${\mathcal M}$}}
\newcommand\Z{\text{$\mathbb{Z}$}}
\newcommand\Q{\text{$\mathbb{Q}$}}
\DeclareMathOperator{\HH}{H}
\DeclareMathOperator{\Max}{max}
\DeclareMathOperator{\Aut}{Aut}
\newcommand\Span[1]{\text{$\langle #1 \rangle$}}
\newcommand\CaptionSpace{\hspace{0.2in}}
\DeclareMathOperator{\Dim}{dim}
\DeclareMathOperator{\Image}{Image}
\newcommand\Figure[3]{
\begin{figure}[t]
\centering
\centerline{\psfig{file=#2,scale=60}}
\caption{#3}
\label{#1}
\end{figure}}
\DeclareMathOperator{\Star}{star}
\DeclareMathOperator{\Link}{link}
\DeclareMathOperator{\Rank}{rk}
\newcommand\CNosep{\mathcal{C}^{\text{ns}}}
\newcommand\Bases{\mathcal{L}}
\DeclareMathOperator{\Picard}{Pic}
\DeclareMathOperator{\One}{\mathbb{I}}
\newcommand\MatTwoTwo[4]{\ensuremath
\left(\begin{smallmatrix}
#1 & #2 \\ #3 & #4
\end{smallmatrix}\right)}
\begin{document}

\maketitle

\begin{abstract}
Let $\Gamma$ be a finite-index subgroup of the mapping
class group of a closed genus $g$ surface that contains the Torelli group.  For
instance, $\Gamma$ can be the level $L$ subgroup or the spin mapping
class group.  We show that $\HH_2(\Gamma;\Q) \cong \Q$ for $g \geq 5$.  A
corollary of this is that the rational Picard groups of the associated
finite covers of the moduli space of curves are equal to $\Q$.  We also prove
analogous results for surface with punctures and boundary components.
\end{abstract} 

\section{Introduction}
\label{section:introduction}

Let $\Sigma_g$ be a closed oriented genus $g$ surface and let $\Mod_g$ be its mapping class group, that is,
the group of isotopy classes of orientation preserving homeomorphisms of $\Sigma_g$ (see 
\cite{FarbMargalitSurvey, IvanovSurvey} for surveys about $\Mod_g$).  Tremendous progress
has been made over the last 40 years in understanding the homology of $\Mod_g$, culminating in
the groundbreaking work of Madsen--Weiss \cite{MadsenWeiss}, who identified $\HH_{\ast}(\Mod_g;\Q)$ in a stable
range.  However, little is known about the homology of finite-index subgroups of $\Mod_g$, or equivalently
about the homology of finite covers of the moduli space of curves.

Denote by $\Torelli_g$ the {\em Torelli group}, that is, the kernel of the representation
$\Mod_g \rightarrow \Sp_{2g}(\Z)$ arising from the action of $\Mod_g$ on $\HH_1(\Sigma_g;\Z)$.
Our main theorem is as follows.  It answers in the affirmative a question of Hain \cite{HainTorelli} which
has since appeared on problem lists of Farb \cite[Conjecture 5.24]{FarbProblems} and
Penner \cite[Problem 11]{PennerProblems}.  

\begin{theorem}[{Rational $\HH_2$ of finite-index subgroups, closed case}]
\label{theorem:mainh2closed}
For $g \geq 5$, let $\Gamma$ be a finite index subgroup of $\Mod_{g}$ such that $\Torelli_{g} < \Gamma$.
Then $\HH_2(\Gamma;\Q) \cong \Q$.
\end{theorem}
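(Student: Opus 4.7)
My plan has two parts: a transfer argument for the lower bound, and a Shapiro-lemma reformulation combined with a Hochschild--Serre analysis for the upper bound.

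\textbf{Lower bound.} Since $[\Mod_g:\Gamma] < \infty$, the transfer and the restriction induce maps $\HH_2(\Mod_g;\Q) \to \HH_2(\Gamma;\Q) \to \HH_2(\Mod_g;\Q)$ whose composition is multiplication by the index. Harer's theorem $\HH_2(\Mod_g;\Q) \cong \Q$ therefore exhibits a canonical $\Q$-summand of $\HH_2(\Gamma;\Q)$, so the content of the theorem is the reverse inequality $\dim_{\Q} \HH_2(\Gamma;\Q) \leq 1$.

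\textbf{Upper bound.} By Shapiro's lemma,
$$\HH_2(\Gamma;\Q) \;=\; \HH_2(\Mod_g;\Q[\Mod_g/\Gamma]) \;=\; \HH_2(\Mod_g;\Q) \oplus \HH_2(\Mod_g;V_0),$$
where $V_0 := \ker(\Q[\Mod_g/\Gamma] \to \Q)$. The first summand accounts for the $\Q$ produced by transfer, so the theorem reduces to the vanishing statement $\HH_2(\Mod_g;V_0) = 0$. The coefficient system $V_0$ is a finite-dimensional $\Q$-representation of $\Mod_g$ whose action factors through the finite quotient $F := \Mod_g/\Gamma$; since $\Torelli_g < \Gamma$, the group $F$ is itself a quotient of $\Sp_{2g}(\Z) = \Mod_g/\Torelli_g$. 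A key elementary fact is that $\HH_{\ast}(F;V_0) = 0$ in all degrees, because $\Q[F] = \Q \oplus V_0$ is a projective $\Q[F]$-module.

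To compute $\HH_2(\Mod_g;V_0)$ I would use the Hochschild--Serre spectral sequence associated to $1 \to \Torelli_g \to \Mod_g \to \Sp_{2g}(\Z) \to 1$,
$$E^2_{p,q} \;=\; \HH_p\bigl(\Sp_{2g}(\Z);\, \HH_q(\Torelli_g;\Q) \otimes V_0\bigr) \;\Longrightarrow\; \HH_{p+q}(\Mod_g;V_0),$$
and show that each of the three terms contributing to $\HH_2$ vanishes. For the $(2,0)$-entry, I would further filter by $1 \to \Lambda \to \Sp_{2g}(\Z) \to F \to 1$ with $\Lambda := \Gamma/\Torelli_g$, and combine Borel's stable computation of $\HH^{\ast}(\Sp_{2g}(\Z);\Q)$ with the vanishing $\HH_{\ast}(F;V_0) = 0$ to kill the resulting $E'^2$-page. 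For the $(1,1)$-entry, Johnson's theorem identifies $\HH_1(\Torelli_g;\Q)$ with the nontrivial irreducible algebraic $\Sp_{2g}$-representation $(\wedge^3 H)/H$ with $H := \HH_1(\Sigma_g;\Q)$, and Borel's vanishing theorem for $\HH^1$ of arithmetic groups with algebraic coefficients forces this term to vanish in the stable range $g \geq 5$.

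\textbf{Main obstacle.} The remaining $(0,2)$-entry is $(\HH_2(\Torelli_g;\Q) \otimes V_0)_{\Sp_{2g}(\Z)}$, and this is the genuine difficulty of the theorem: $\HH_2(\Torelli_g;\Q)$ is not known to be finite-dimensional, its $\Sp$-module structure is subtle, and $\Torelli_g$ is not even known to be finitely presented for $g \geq 3$. My plan here would be to invoke Hain's theorem on the quadratic presentation of the graded Malcev Lie algebra $\mathfrak{t}_g$ of $\Torelli_g$, which controls the $\Sp$-isotypic decomposition of the image of $\HH_2(\Torelli_g;\Q)$ in $\HH_2(\mathfrak{t}_g;\Q)$, and to combine this with the vanishing $V_0^F = 0$ to rule out invariant classes; alternatively, one could proceed geometrically, letting $\Gamma$ act on a suitable simplicial complex (for instance the complex of separating curves or Bestvina--Bux--Margalit's complex of cycles) and reducing to subsurface contributions by induction on genus. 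This final step is the principal obstacle.
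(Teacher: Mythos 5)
Your transfer argument and the Shapiro-lemma reformulation are both correct, and your vanishing arguments for the $(2,0)$ and $(1,1)$ terms of the Hochschild--Serre spectral sequence are essentially sound (modulo checking that Borel's twisted stable range is large enough at $g=5$ for $\HH^1$ with coefficients in $\wedge^3 H / H$, and a normality issue if $\Gamma$ is not normal, which is easy to repair by passing to the normal core). But you have correctly identified, and not filled, the genuine gap: the $(0,2)$-term $(\HH_2(\Torelli_g;\Q)\otimes V_0)_{\Sp_{2g}(\Z)}$. This is not a step that can be finessed with known results. It is not known that $\HH_2(\Torelli_g;\Q)$ is finite-dimensional for any $g\geq 3$, nor that $\Torelli_g$ is finitely presented, nor that the $\Sp_{2g}(\Z)$-action on $\HH_2(\Torelli_g;\Q)$ extends to an algebraic representation (this is precisely the open Question~\ref{question:torelli} in the paper, specialized to $k'=2$). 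Hain's quadratic presentation of the Malcev Lie algebra $\mathfrak{t}_g$ gives information only about the image of $\HH_2(\Torelli_g;\Q)$ under the comparison map to $\HH_2(\mathfrak{t}_g;\Q)$; you have no handle on the kernel, which could carry invariant classes. Your geometric alternative (acting on the separating-curve complex or on the complex of cycles and inducting on genus) also faces the difficulty, acknowledged in the paper, that there is no base case for such an induction.

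The paper avoids the $(0,2)$-term entirely by a different mechanism, the \emph{stability trick} (Lemma~\ref{lemma:stabilitytrick}). After reducing to $\Gamma = \Mod_g(L)$ normal, it suffices by the transfer formula $\HH_*(\Mod_g;\Q) = (\HH_*(\Gamma;\Q))_{\Mod_g}$ to show the conjugation action of $\Mod_g$ on $\HH_2(\Gamma;\Q)$ is trivial, i.e., that each Dehn twist $T_\gamma$ about a nonseparating curve acts trivially. Since $T_\gamma$ is central in the stabilizer $\Gamma_\gamma$, it acts trivially on $\HH_2(\Gamma_\gamma;\Q)$; so it is enough to prove the map $\HH_2(\Gamma_\gamma;\Q)\to\HH_2(\Gamma;\Q)$ is \emph{onto}. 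That surjectivity is what the bulk of the paper establishes, by running an equivariant-homology spectral sequence for the $\Mod_g(L)$-action on the nonseparating curve complex $\CNosep_g$ (Propositions~\ref{proposition:decompthm} and \ref{proposition:cnosepmodlcon}), and then a group-cohomology computation with the Birman exact sequence and a key twisted first-homology vanishing from \cite{PutmanVanishing} to show all curve-stabilizer summands have the same image (Proposition~\ref{proposition:weakstability}). None of this requires any information about $\HH_2(\Torelli_g;\Q)$. Your $\HH_2(\Mod_g;V_0)=0$ reformulation is a legitimate and equivalent target, but the route through the Torelli Hochschild--Serre spectral sequence leads straight into the open problem; the stability trick is the missing idea that routes around it.
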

\noindent
We also have an analogous result for surfaces with punctures and boundary components; see Theorem \ref{theorem:mainh2} below.

\paragraph{Examples.}
The subgroups of $\Mod_g$ to which Theorem \ref{theorem:mainh2closed} applies are exactly the pullback
to $\Mod_g$ of finite-index subgroups of $\Sp_{2g}(\Z)$.  Two key examples are as follows.

\begin{example}[{Level $L$ subgroup}]
For an integer $L \geq 2$, the {\em level $L$ subgroup} $\Mod_g(L)$ of $\Mod_g$ is the group of mapping
classes that act trivially on $\HH_1(\Sigma_g;\Z/L)$.  The image of $\Mod_g(L)$ in $\Sp_{2g}(\Z)$ is
the kernel of the natural map $\Sp_{2g}(\Z) \rightarrow \Sp_{2g}(\Z/L)$.  This group of matrices, denoted
$\Sp_{2g}(\Z,L)$, is known as the {\em level $L$ subgroup} of $\Sp_{2g}(\Z)$.
\end{example}

\begin{example}[{Spin subgroup}]
Letting $U\Sigma_g$ be the unit tangent bundle of $\Sigma_g$, a {\em spin structure} on $\Sigma_g$
is an element $\omega \in \HH^1(U\Sigma_g;\Z/2)$ such that $\omega(\ell) = 1$, where 
$\ell \in \HH_1(U\Sigma_g;\Z/2)$ is the loop around the fiber.  If $\Sigma_g$ is given
the structure of a Riemann surface, then spin structures on $\Sigma_g$ can be identified
with {\em theta characteristics}, i.e.\ square roots of the canonical bundle \cite[Proposition 3.2]{AtiyahSpin}.
More topologically, Johnson \cite{JohnsonSpin} showed that spin structures on $\Sigma_g$ can
be identified with $\Z/2$-valued quadratic forms $\hat{\omega}$ on $\HH_1(\Sigma_g;\Z/2)$, i.e.\ functions 
$\hat{\omega} : \HH_1(\Sigma_g;\Z/2) \rightarrow \Z/2$ that satisfy
$$\hat{\omega}(x+y) = \hat{\omega}(x) + \hat{\omega}(y) + i(x,y) \quad \text{for all $x,y \in \HH_1(\Sigma_g;\Z/2)$.}$$
Here $i(\cdot,\cdot)$ is the algebraic intersection pairing.  Such quadratic forms are classified up
to isomorphism by their $\Z/2$-valued Arf invariant.  The group $\Mod_g$ acts on the set
of spin structures on $\Sigma_g$, and this action is transitive on the set of spin structures
of a fixed Arf invariant.  If $\omega$ is a spin structure on $\Sigma_g$, then
the stabilizer subgroup $\Mod_g(\omega)$ of $\omega$ in $\Mod_g$ is known as a 
{\em spin mapping class group} (see, e.g., \cite{HarerStableSpin, HarerH2Spin}).
\end{example}

\begin{remark}
The reader should be warned that some papers (e.g. \cite{GalatiusSpin}) use 
the term ``spin mapping class group'' to refer to a certain degree $2$ extension
of $\Mod_g(\omega)$.
\end{remark}

\begin{remark}
If $\omega$ and $\omega'$ are spin structures on $\Sigma_g$ of Arf invariant $0$ and $1$, respectively, then
$\Mod_g(\omega)$ is not isomorphic to $\Mod_g(\omega')$.  Here is a quick proof.  The desired result is trivial
for $g=1$, so we will restrict to the case $g \geq 2$.  It is well-known that there 
are $2^{g-1}(2^g+1)$ (resp.\ $2^{g-1}(2^g-1)$) spin structures on $\Sigma_g$ of Arf invariant $0$ (resp.\ $1$), so
$$[\Mod_g:\Mod_g(\omega)] = 2^{g-1}(2^g+1) \quad \quad \text{and} \quad \quad [\Mod_g:\Mod_g(\omega')] = 2^{g-1}(2^g-1).$$
Ivanov \cite{IvanovComm} proved that if $G,G' < \Mod_g$ are isomorphic 
finite-index subgroups and $g \geq 2$, then there exists some $f \in \Aut(\Mod_g)$ such that
$f(G) = G'$ (Ivanov also proved that for $g \geq 3$, all elements of $\Aut(\Mod_g)$ are
induced by conjugation by elements of the {\em extended mapping class group}, which is
the group of mapping classes that are allowed to reverse orientation; there are ``exotic'' automorphisms
in the case $g=2$).  In particular, if $G,G' < \Mod_g$ are isomorphic finite-index subgroups,
then $[\Mod_g:G] = [\Mod_g:G']$, so $\Mod_g(\omega)$ and $\Mod_g(\omega')$ cannot be isomorphic.
\end{remark}

\paragraph{Moduli space of curves.}
Theorem \ref{theorem:mainh2closed} has consequences for the moduli space $\Moduli_g$ of genus
$g$ Riemann surfaces.  Indeed, $\Mod_g$ is the orbifold fundamental group of $\Moduli_g$, and finite
index subgroups of $\Mod_g$ correspond to finite covers of $\Moduli_g$.  For example, $\Mod_g(L)$ is
the orbifold fundamental group of the moduli space of genus $g$ Riemann surfaces $S$ equipped
with level $L$ structures (i.e.\ distinguished symplectic bases for $\HH_1(S;\Z/L)$).  
Similarly, if $\omega$ is a spin structure of Arf invariant $k$, then $\Mod_g(\omega)$
is the orbifold fundamental group of the moduli space of genus $g$ Riemann surfaces 
equipped with distinguished theta characteristics of parity $k$.

Hain observed in \cite{HainTorelli} that Theorem \ref{theorem:mainh2closed} together
with his computation of the first rational homology group
of these subgroups has the following consequence.

\begin{corollary}[{Picard number one conjecture for moduli spaces of curves with level structures}]
\label{corollary:picard}
For $g \geq 5$, let $\widetilde{\Moduli}_g$ be a finite cover of $\Moduli_g$ whose orbifold fundamental group is
$\Gamma < \Mod_g$.  Assume that $\Torelli_g < \Gamma$.  Then $\Picard(\widetilde{\Moduli}_g) \otimes \Q \cong \Q$.
\end{corollary}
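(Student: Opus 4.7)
The plan is to follow Hain's observation that Theorem \ref{theorem:mainh2closed} immediately upgrades to the Picard statement once one runs it through the dictionary between the orbifold cohomology of $\widetilde{\Moduli}_g$ and the group cohomology of $\Gamma$. Since Teichm\"uller space $\Teich_g$ is contractible and $\widetilde{\Moduli}_g = \Teich_g/\Gamma$ in the orbifold sense, I have $\HH^k(\widetilde{\Moduli}_g;\Q) \cong \HH^k(\Gamma;\Q)$ for all $k$. Combined with Theorem \ref{theorem:mainh2closed} and Hain's computation of $\HH_1$ (which shows $\HH_1(\Gamma;\Q) = 0$ for $\Gamma$ between $\Torelli_g$ and $\Mod_g$ with $g \geq 3$, since any such $\Gamma$ surjects onto a finite-index subgroup of $\Sp_{2g}(\Z)$, which has no rational $\HH_1$), the universal coefficients theorem gives $\HH^2(\widetilde{\Moduli}_g;\Q) \cong \HH^2(\Gamma;\Q) \cong \Q$.

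Next I want to identify $\Picard(\widetilde{\Moduli}_g) \otimes \Q$ with $\HH^2(\widetilde{\Moduli}_g;\Q)$ via the first Chern class map. The relevant tool is the exponential exact sequence $0 \to \Z \to \mathcal{O} \to \mathcal{O}^{\ast} \to 0$ on the smooth Deligne--Mumford stack $\widetilde{\Moduli}_g$, whose long exact sequence in cohomology produces
\[
\HH^1(\widetilde{\Moduli}_g;\mathcal{O}) \longrightarrow \Picard(\widetilde{\Moduli}_g) \xrightarrow{c_1} \HH^2(\widetilde{\Moduli}_g;\Z) \longrightarrow \HH^2(\widetilde{\Moduli}_g;\mathcal{O}).
\]
To use this, I would invoke mixed Hodge theory for the smooth (open) variety $\widetilde{\Moduli}_g$: the Hodge filtration on $\HH^1(\widetilde{\Moduli}_g;\C)$ yields $\HH^1(\widetilde{\Moduli}_g;\mathcal{O}) = \HH^1(\widetilde{\Moduli}_g;\C)/F^1$, and from $\HH^1(\Gamma;\Q) = 0$ I conclude $\HH^1(\widetilde{\Moduli}_g;\mathcal{O}) = 0$. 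Therefore $c_1$ becomes injective after tensoring with $\Q$, embedding $\Picard(\widetilde{\Moduli}_g) \otimes \Q$ into $\HH^2(\widetilde{\Moduli}_g;\Q) \cong \Q$.

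It remains to show the embedding is not zero, and this is where I would bring in the Hodge bundle: the pullback to $\widetilde{\Moduli}_g$ of the Hodge bundle $\lambda$ on $\Moduli_g$ is a line bundle whose rational first Chern class is nonzero (for instance, because Mumford computed $\kappa_1 = 12\lambda$ and one can integrate against a suitable test class, or because the restriction to any complete curve in the moduli space has positive degree). This produces a nonzero element of $\Picard(\widetilde{\Moduli}_g) \otimes \Q$, so $c_1$ is surjective as well, and we conclude $\Picard(\widetilde{\Moduli}_g) \otimes \Q \cong \Q$.

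The main obstacle is the Hodge-theoretic vanishing $\HH^1(\widetilde{\Moduli}_g;\mathcal{O}) = 0$ needed to bound the kernel of $c_1$; all the rest is formal or standard. Once one trusts that the mixed Hodge structure on $\HH^1$ of the smooth open variety $\widetilde{\Moduli}_g$ is pure of weight $1$ in Hodge type whenever its rational cohomology vanishes, the rest of the argument is essentially bookkeeping between $\Gamma$ and $\widetilde{\Moduli}_g$.
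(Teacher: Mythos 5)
The paper itself offers no argument for this corollary: it simply records that Hain observed the statement follows from the $\HH_1$ and $\HH_2$ computations, and defers to \cite{HainTorelli}. So what you are really doing is reconstructing Hain's observation, and your outline has the right shape --- an exponential-sequence / Hodge-theoretic argument in which the vanishing of $\HH^1(\Gamma;\Q)$ kills the kernel of $c_1$ and the Hodge class $\lambda$ (or $\kappa_1$) forces surjectivity, with transfer guaranteeing the pullback to the finite cover stays nonzero.

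However, the one step you yourself flag as the main obstacle is indeed where the argument goes wrong. The identity $\HH^1(\widetilde{\Moduli}_g;\mathcal{O}_{\widetilde{\Moduli}_g}) = \HH^1(\widetilde{\Moduli}_g;\C)/F^1$ is not correct for a smooth quasi-projective variety. Deligne's mixed Hodge theory computes $\HH^1(U;\C)$ as the hypercohomology of the log de Rham complex on a smooth compactification $(X,D)$, and the graded piece $\mathrm{Gr}^0_F\HH^1(U;\C)$ is $\HH^1(X;\mathcal{O}_X)$ --- cohomology of the structure sheaf on the \emph{compactification}, not on $U$. The group $\HH^1(U;\mathcal{O}_U)$ appearing in the exponential sequence on $U$ is a genuinely different object (for instance it vanishes identically when $U$ is affine, independently of $\HH^1(U;\Q)$). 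So the deduction ``$\HH^1(\Gamma;\Q)=0 \Rightarrow \HH^1(\widetilde{\Moduli}_g;\mathcal{O})=0$'' is not established by your reasoning, and the injectivity of $c_1$ after tensoring with $\Q$ is left open. The last sentence of your writeup also doesn't parse: if $\HH^1(U;\Q)=0$ there is no Hodge structure on $\HH^1$ to be pure.

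The correct route, and the one Hain actually uses, avoids running the exponential sequence on the open variety. For a smooth quasi-projective $U$ one has a natural exact sequence identifying $\Picard(U)$ up to an extension by a generalized intermediate Jacobian built from $\HH^1(U)$, with quotient the group of Hodge classes $\Hom_{\mathrm{MHS}}(\Z(-1),\HH^2(U;\Z))$. The vanishing $\HH^1(\Gamma;\Q)=0$ kills the Jacobian term rationally, giving $\Picard(U)\otimes\Q \cong \Hom_{\mathrm{MHS}}(\Q(-1),\HH^2(U;\Q))$; since $\HH^2(\widetilde{\Moduli}_g;\Q)\cong\Q$ by Theorem \ref{theorem:mainh2closed} and contains the algebraic class pulled back from $\lambda$, the whole line is of type $(1,1)$ and weight $2$, and the corollary follows. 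Replacing your exponential-sequence step with this lemma of Hain's would close the gap; the rest of your argument (the passage from group cohomology to orbifold cohomology, the use of transfer for nonvanishing of the pullback of $\lambda$) is fine.
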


\noindent
For moduli spaces of curves with punctures and boundary components, a similar result follows from
Theorem \ref{theorem:mainh2} below.

\begin{remark}
In a sequel to this paper \cite{PutmanPicardGroup}, the author has computed the {\em integral} Picard groups
and second cohomology groups of $\Mod_g(L)$ for many $L$.  Theorem \ref{theorem:mainh2closed} and
Corollary \ref{corollary:picard} are key ingredients in this calculation.
\end{remark}

\paragraph{Motivation and history.}
What would one expect the second rational homology group of a finite-index subgroup of $\Mod_g$ to be?  

Harer \cite{HarerH2} proved that $\HH_2(\Mod_g;\Q) \cong \Q$ for $g \geq 3$, and via the transfer homomorphism
of group homology one can show that if $A$ is a finite-index subgroup of $B$, then the map 
$\HH_k(A;\Q) \rightarrow \HH_k(B;\Q)$ is
a surjection for all $k$.  Hence the rank of the second rational homology group of our subgroup
is at least $1$, but there is no a priori reason that it cannot be quite large.

However, one of the major themes in the study of $\Mod_g$ is that it shares many properties with
lattices in Lie groups.  Borel (\cite{BorelStability1, BorelStability2}; see also \cite{CharneyCongruence}) 
has proven a very general result about the rational cohomology groups of arithmetic subgroups
of semisimple algebraic groups that implies, for instance,
that for all $k$ there exists some $N$ such that if $\Gamma$ is a finite-index subgroup
of $\SL_n(\Z)$ and $n \geq N$, then $\HH_k(\Gamma;\Q) \cong \HH_k(\SL_n(\Z);\Q)$.

Perhaps inspired by this result, Harer \cite{HarerH2Spin} proved
that the first and second rational homology groups of the spin mapping
class group are the same as those of the whole mapping class group for $g$ sufficiently large.  
Later, Hain (\cite{HainTorelli}; see also \cite{McCarthyCofinite})
proved that if $g \geq 3$ and $\Gamma$ is any finite-index subgroup of $\Mod_g$ that contains $\Torelli_g$, then
$\HH_1(\Gamma;\Q) \cong \HH_1(\Mod_g;\Q) = 0$.  

\begin{remark}
In \cite{Foisy}, Foisy claims to prove Theorem \ref{theorem:mainh2} for the level $2$ subgroup
of $\Mod_g$.  However, Foisy has indicated to us that the proof of Lemma 5.1 in \cite{Foisy} contains
a mistake, so his proof is incomplete.
\end{remark}

\paragraph{Proof sketch.}
We now discuss the proof of Theorem \ref{theorem:mainh2closed}, focusing on the
case of $\Mod_g(L)$.  A first approach to proving Theorem \ref{theorem:mainh2closed} is to attempt to show that
some of the structure of the homology groups of $\Mod_g$ persists in $\Mod_g(L)$.  In particular,
Harer \cite{HarerStability} proved that $\HH_k(\Mod_g;\Z)$ is independent of $g$ for $g$ large.  
Let $\Mod_{g,b}$ denote the mapping
class group of an oriented genus $g$ surface with $b$ boundary components $\Sigma_{g,b}$ (see
\S \ref{section:puncturesboundary} for our conventions on surfaces with boundary).  Any subsurface
inclusion $\Sigma_{g-1,1} \hookrightarrow \Sigma_g$ induces a map $\Mod_{g-1,1} \hookrightarrow \Mod_g$ (``extend
by the identity'').  A more precise statement of part of Harer's theorem is that the induced 
map $\HH_k(\Mod_{g-1,1};\Z) \rightarrow \HH_k(\Mod_g;\Z)$ is an isomorphism for $g$ large.  

We will not need the full strength of Harer's result.  Let $\gamma$ be a nonseparating simple closed
curve on $\Sigma_g$ and let $\Sigma_{g-1,1} \hookrightarrow \Sigma_g$ be a subsurface with 
$\gamma \subset \Sigma_{g} \setminus \Sigma_{g-1,1}$.  Denoting the stabilizer in $\Mod_g$ of the isotopy class of
$\gamma$ by $(\Mod_g)_{\gamma}$, Harer's result implies that for $g$ large, the composition
$$\HH_k(\Mod_{g-1,1};\Z) \longrightarrow \HH_k((\Mod_g)_{\gamma};\Z) \longrightarrow \HH_k(\Mod_g;\Z)$$
is an isomorphism, and hence that the map $\HH_k((\Mod_g)_{\gamma};\Z) \rightarrow \HH_k(\Mod_g;\Z)$ is a surjection.

The key observation underlying the philosophy of our proof is contained in Lemma \ref{lemma:stabilitytrick}
below, which says that to prove Theorem \ref{theorem:mainh2closed}, it is enough to prove
a similar stability result for $\HH_2(\Mod_g(L);\Q)$.  Namely, it is enough to prove the following
theorem.

\begin{theorem}
\label{theorem:stability}
Fix $g \geq 5$ and $L \geq 2$.  Let $\gamma$ be a simple closed nonseparating curve on $\Sigma_g$.  Then
the natural map $\HH_2((\Mod_g(L))_{\gamma};\Q) \rightarrow \HH_2(\Mod_g(L);\Q)$ is a surjection.
\end{theorem}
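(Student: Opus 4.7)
The plan is to exploit the action of $\Mod_g(L)$ on a highly connected complex of curves via an equivariant spectral sequence. Take $\CNosep$ to be the complex whose $k$-simplices are collections of $k+1$ disjoint isotopy classes of nonseparating simple closed curves on $\Sigma_g$ with connected complement; by work of Harer this complex is $N$-connected with $N$ growing linearly in $g$, in particular $2$-connected for $g \geq 5$. The associated first-quadrant equivariant spectral sequence
$$E^1_{p,q} = \bigoplus_{\sigma \in \CNosep_p / \Mod_g(L)} \HH_q\bigl(\mathrm{Stab}_{\Mod_g(L)}(\sigma);\Q\bigr) \Longrightarrow \HH_{p+q}(\Mod_g(L);\Q)$$
yields a filtration $F_0 \subset F_1 \subset F_2 = \HH_2(\Mod_g(L);\Q)$ with $F_p/F_{p-1} \cong E^\infty_{p,2-p}$. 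All vertex stabilizers $(\Mod_g(L))_{\gamma'}$ for $\gamma'$ nonseparating are conjugate in $\Mod_g$ and hence abstractly isomorphic to $(\Mod_g(L))_{\gamma}$.

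The subtlety distinguishing the level case from Harer's stability argument for $\Mod_g$ is that $\Mod_g(L)$ does \emph{not} act transitively on the vertices of $\CNosep$: the orbits are parametrized by primitive elements of $\HH_1(\Sigma_g;\Z/L)$. To deduce the surjectivity from a single fixed orbit, I would proceed in two tasks. Task (i): show that $E^\infty_{1,1} = E^\infty_{2,0} = 0$, so that $F_0 = \HH_2(\Mod_g(L);\Q)$ and the whole $\HH_2$ is built from vertex stabilizers. Task (ii): show that the images of distinct vertex orbits in $F_0$ coincide, by exhibiting, for each pair of vertex orbits, an edge of $\CNosep$ with one endpoint in each orbit, so that the $d^1$-differential from that edge equates the two orbits' contributions.

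Task (ii) reduces to a surface-topology transitivity statement in the spirit of Putman's earlier work on generators of level subgroups: any two nonseparating curves of arbitrary specified mod-$L$ homology classes can be made disjoint with connected complement. For task (i), the vanishing of $E^\infty_{2,0}$ similarly reduces to connectivity of the quotient $\CNosep / \Mod_g(L)$. The hard part is the vanishing of $E^\infty_{1,1}$, which requires controlling $\HH_1$ of edge stabilizers $(\Mod_g(L))_{\{\gamma_1,\gamma_2\}}$. Since cutting $\Sigma_g$ along $\gamma_1 \cup \gamma_2$ yields a surface of genus $g-2$ with two boundary components, these edge stabilizers are closely related to finite-index subgroups of $\Mod_{g-2,2}$ containing its Torelli group, and one would like their rational $\HH_1$ to vanish in the spirit of Hain's theorem.

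The main obstacle I expect is precisely this analysis of $E^\infty_{1,1}$: it couples the closed case to the corresponding statement for bordered and punctured surfaces (Theorem \ref{theorem:mainh2}), forcing a simultaneous induction on $g$ in which the bordered cases feed the closed one. I anticipate the bulk of the technical work to live in this coupled induction, together with the verification of the surface-topology transitivity statements needed to describe the $\Mod_g(L)$-orbit structure on $\CNosep$ in low dimensions.
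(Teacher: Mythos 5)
Your overall strategy---use the equivariant spectral sequence for the action of $\Mod_g(L)$ on $\CNosep_g$, kill $E^\infty_{2,0}$ and $E^\infty_{1,1}$, then argue that all vertex orbits have the same image---is indeed the architecture of the paper (Proposition \ref{proposition:decompthmweak} and the concluding argument of \S\ref{section:stabilitytrick}). Your task~(i) corresponds to verifying the hypotheses of Theorem \ref{theorem:equivarianthomologythm}: the quotient $\CNosep_g/\Mod_g(L)$ is $(g-2)$-connected (Proposition \ref{proposition:cnosepmodlcon}, proven by a linear-algebraic argument about lax isotropic bases in $\HH_1(\Sigma_g;\Z/L)$), and the $\HH_1$ of edge stabilizers vanishes (Lemma \ref{lemma:h1modlstab}). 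The latter is a computation, \emph{not} an invocation of the bordered version of the main theorem, so the coupled induction you anticipate is not needed; the paper has no induction on $g$ anywhere in this proof.

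The genuine gap is in your task~(ii). You propose to equate the contributions of two vertex orbits $O_1,O_2$ by exhibiting an edge $\{v_1,v_2\}$ with $v_i \in O_i$ and using the $d^1$-differential $E^1_{1,2} \to E^1_{0,2}$. But $d^1$ restricted to the $\HH_2$ of that edge stabilizer only identifies, in the cokernel, the two images of $\HH_2\bigl((\Mod_g(L))_{\{v_1,v_2\}};\Q\bigr)$; it does \emph{not} identify all of $\HH_2((\Mod_g(L))_{v_1};\Q)$ with all of $\HH_2((\Mod_g(L))_{v_2};\Q)$ unless you also know that $\HH_2\bigl((\Mod_g(L))_{\{v_1,v_2\}};\Q\bigr) \to \HH_2((\Mod_g(L))_{v_i};\Q)$ is surjective for each $i$. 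That surjectivity statement is itself a homological stability assertion for level subgroups, and establishing it is exactly as hard as the problem you started with. The paper avoids this circularity by replacing the disjoint pair $\{v_1,v_2\}$ with a pair of curves $\gamma_1,\gamma_2$ intersecting \emph{once}: then there is a common genus-$(g-1)$ subsurface $S$ disjoint from both, and the weak stability theorem (Proposition \ref{proposition:weakstability}) shows $\HH_2(\Mod(S,L);\Q) \to \HH_2(\Mod_{g,\gamma_i}(L);\Q)$ is an isomorphism for $i=1,2$. That isomorphism, not a $d^1$-argument, is what forces the two orbits' images to agree.

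What your sketch is missing is the ingredient that makes Proposition \ref{proposition:weakstability} true: the Birman-type exact sequence for $\Mod_{g,\gamma}(L)$ (Theorem \ref{theorem:birmanexactsequencel2}) together with the twisted first-homology vanishing $\HH_1(\Mod_{g-1,1}(L); C_{g-1,1}) = 0$ from \cite{PutmanVanishing}, where $C_{g-1,1}$ is the ``non-$\HH_1(\Sigma)$'' part of $\HH_1(K_{g-1,1};\Q)$ and $K_{g-1,1}$ is the congruence kernel of $\pi_1(\Sigma_{g-1,1})$. This is the input that controls the twisted $\HH_1$ terms in the Hochschild--Serre spectral sequence for the Birman sequence (Lemma \ref{lemma:weakstabilityvanish}), and it is where the real work lies---not, as you suggest, in $E^\infty_{1,1}$. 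Without some substitute for this vanishing result, task~(ii) cannot be closed.
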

 
\begin{remark}
For technical reasons, it is {\em not} true that $\HH_2((\Mod_g)_{\gamma};\Q) \cong \HH_2(\Mod_g;\Q)$ for
$g$ large, and similarly for $\Mod_{g}(L)$.  See \S \ref{section:decompthmstrong}.
\end{remark}

\begin{remark}
A naive approach to proving Theorem \ref{theorem:mainh2closed} would be to perform some sort of induction
on $g$.  However, we suspect that Theorem \ref{theorem:mainh2closed} is false for small $g$, so it would be
difficult to establish a base case for the induction.  It is perhaps amusing to observe that our
proof is essentially the inductive step for such an induction, but with no need for a base case!
\end{remark}

We now discuss the proof of Theorem \ref{theorem:stability}.  Analogous homological stability
theorems are known for many sequences of groups (the literature is too large to summarize briefly --
\cite{VanDerKallen} is an important representative of these sorts of results, and \cite[Chapter 2]{KnudsonBook} is
a textbook reference), 
and there has evolved an essentially standard method for proving them.  The basic
idea underlying these proofs goes back to unpublished work of Quillen on the homology groups
of general linear groups over fields.
Given a sequence of groups $\{G_i\}_{i \in \Z}$, one constructs a sequence 
$\{X_i\}_{i \in \Z}$ of highly connected simplicial complexes such that $G_i$ acts on $X_i$.
One then applies a spectral sequence arising from the Borel construction
to decompose $\HH_{\ast}(G_i;\Z)$ in terms of the homology groups of the stabilizer subgroups of simplices in $X_i$.
The $X_i$ are constructed so that these stabilizer subgroups are equal to (or at least
similar to) earlier groups in the sequence of groups.

Harer proved his stability theorem for mapping class groups of closed surfaces by applying an argument of this
type to the {\em nonseparating complex of curves}, denoted $\CNosep_g$.  This is the simplicial
complex whose $(n-1)$-simplices are sets $\{\gamma_1,\ldots,\gamma_n\}$ of isotopy class
of nonseparating simple closed curves on $\Sigma_g$ which can be realized disjointly with
$\Sigma_g \setminus (\gamma_1 \cup \cdots \cup \gamma_n)$ connected.
We attempt to imitate this.  

Alas, it does not quite work.  The problem is that
for $H_2$, the output of the homological stability machinery is that the natural map
\begin{equation}
\label{eqn:stability}
\bigoplus_{\gamma \in (\CNosep_{g} / \Mod_{g}(L))^{(0)}} \HH_2((\Mod_{g}(L))_{\tilde{\gamma}};\Q) \longrightarrow \HH_2(\Mod_{g}(L);\Q)
\end{equation}
is surjective for $g \geq 5$.  Here $\tilde{\gamma}$ is an arbitrary lift of $\gamma$ to $\CNosep_g$ and
$(\Mod_{g}(L))_{\tilde{\gamma}}$ is the stabilizer subgroup of $\tilde{\gamma}$ in $\Mod_{g}(L)$.  Two
different lifts give conjugate stabilizer subgroups and hence the same homology groups.  
This is not enough because unlike $\Mod_g$, the group $\Mod_{g}(L)$ does not act transitively on the set of nonseparating
simple closed curves.  To solve this problem, we must 
perform a group cohomological computation to show that the stabilizers of any
two simple closed nonseparating curves give the same ``chunk'' of homology, and thus that 
the map $\HH_2((\Mod_g(L))_{\gamma};\Q) \rightarrow \HH_2(\Mod_g(L);\Q)$ is surjective.  The key
to this computation is a certain vanishing result of the author (\cite{PutmanVanishing}; see
Lemma \ref{lemma:vanishing} below) for the twisted first homology groups
of $\Mod_{g}(L)$ with coefficients in the homology groups of abelian covers of the surface.

\begin{remark}
One reason why the standard homological stability machinery must break when applied to $\Mod_g(L)$ is that
if it worked, then it would yield an {\em integral} homology stability result.  However, it is known
that even $\HH_1(\Mod_g(L);\Z)$ does not stabilize (see \cite{PutmanPicardGroup}).
\end{remark}

\paragraph{Conjectures.}
Given Theorem \ref{theorem:mainh2closed} together with Hain's theorem about the first
rational homology groups of $\Mod_g(L)$, we conjecture that a similar result must
hold for the higher homology groups.  More specifically, we make the following conjecture.

\begin{conjecture}[{Isomorphism conjecture}]
\label{conjecture:isomorphism}
For fixed $L \geq 2$ and $k \geq 1$, there exists some $N$ such that if $g \geq N$, then
the map $\HH_k(\Mod_g(L);\Q) \rightarrow \HH_k(\Mod_g;\Q)$ is an isomorphism.
\end{conjecture}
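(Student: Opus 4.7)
}
The plan is to combine the spectral sequence techniques that prove Theorem \ref{theorem:mainh2closed} with an induction on $k$, guided by the following reduction. Since $\Mod_g(L)$ is normal in $\Mod_g$ with finite quotient $\Sp_{2g}(\Z/L)$, the Hochschild--Serre spectral sequence with rational coefficients degenerates and yields
\begin{equation*}
\HH_k(\Mod_g;\Q) \cong \HH_k(\Mod_g(L);\Q)^{\Sp_{2g}(\Z/L)}.
\end{equation*}
Hence the conjecture is equivalent to showing that the residual action of $\Sp_{2g}(\Z/L)$ on $\HH_k(\Mod_g(L);\Q)$ is trivial for $g$ large.

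The induction would have base cases $k=1$ (Hain) and $k=2$ (Theorem \ref{theorem:mainh2closed}). For the inductive step, I would first establish a higher-degree analog of Lemma \ref{lemma:stabilitytrick}: that if the natural map $\HH_k((\Mod_g(L))_\gamma;\Q) \to \HH_k(\Mod_g(L);\Q)$ is surjective for some nonseparating $\gamma$ and all $g$ sufficiently large, then the isomorphism holds in degree $k$, modulo low-degree inputs supplied by the induction hypothesis. To prove this surjectivity, I would apply the Borel construction spectral sequence for the action of $\Mod_g(L)$ on $\CNosep_g$, using Harer's high-connectivity theorem to control the $E^2$-page. Just as in the $k=2$ case, one initially obtains a decomposition of $\HH_k(\Mod_g(L);\Q)$ as a direct sum over $\Mod_g(L)$-orbits of nonseparating curves, and the content is to show that the images from different orbits coincide, which reduces to a twisted group homology calculation.

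The hard part, and the reason this is still a conjecture, is the required generalization of the twisted first homology vanishing result of \cite{PutmanVanishing}. For $k=2$ one needs vanishing of $\HH_1(\Mod_g(L); V)$ for specific coefficient systems $V$ built from the homology of abelian covers of the surface. For general $k$ one would need analogous vanishing of $\HH_{k-1}(\Mod_g(L); V)$, and even for the full mapping class group such twisted homology groups are largely unknown in middle degrees. A related difficulty is that the $E^1$ contributions of $p$-simplex stabilizers in $\CNosep_g$ involve homology of lower-genus mapping class groups in degree $k-p$ with nontrivial symplectic coefficient systems, and tracking the $\Sp_{2g}(\Z/L)$-action on these terms throughout the induction appears delicate.

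A parallel approach I would pursue is to analyze the Hochschild--Serre spectral sequence of $1 \to \Torelli_g \to \Mod_g(L) \to \Sp_{2g}(\Z, L) \to 1$ and compare it with the corresponding one for $\Mod_g$. Borel's theorem gives $\HH_p(\Sp_{2g}(\Z,L);\Q) \cong \HH_p(\Sp_{2g}(\Z);\Q)$ in a stable range, and an extension of this isomorphism with twisted coefficients in $\HH_q(\Torelli_g;\Q)$—if one could prove it—would yield the conjecture directly. This reroutes the difficulty into understanding $\HH_q(\Torelli_g;\Q)$ as an $\Sp_{2g}(\Z)$-representation, itself a major open problem, though representation-stability techniques offer a promising framework for supplying such inputs.
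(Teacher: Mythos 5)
Your proposal mirrors the paper's own discussion of this conjecture, which the paper does not prove: it reduces the conjecture to Conjecture \ref{conjecture:weakstability} via Lemma \ref{lemma:stabilitytrick} (which is already stated for general $k$, so no new analog is actually needed), notes that the equivariant-homology decomposition over curve stabilizers generalizes but that the calculation identifying the images of the different $\Mod_g(L)$-orbits is the hard open step, and sketches the alternate route through Question \ref{question:torelli} by comparing the Hochschild--Serre spectral sequences of the Torelli extensions of $\Mod_g(L)$ and $\Mod_g$ using Borel stability with twisted coefficients and Zeeman's comparison theorem. You correctly identify both routes and the genuine obstructions---the needed generalization of the twisted vanishing result of \cite{PutmanVanishing} and control of $\HH_q(\Torelli_g;\Q)$ as an $\Sp_{2g}$-representation---so your proposal is consistent with the paper's treatment.
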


\noindent
Just as in the proof of Theorem \ref{theorem:mainh2closed}, to prove Conjecture
\ref{conjecture:isomorphism}, it is enough to prove Conjecture \ref{conjecture:weakstability}
below.

\begin{conjecture}[{Stability conjecture}]
\label{conjecture:weakstability}
For fixed $L \geq 2$ and $k \geq 1$, there exists some $N$ such that if $g \geq N$ and
$\gamma$ is a simple closed nonseparating curve on $\Sigma_g$, then
the map $\HH_k((\Mod_{g}(L))_{\gamma};\Q) \rightarrow \HH_k(\Mod_{g}(L);\Q)$
is a surjection.
\end{conjecture}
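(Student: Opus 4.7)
The plan is to mimic the proof of Theorem \ref{theorem:stability} in higher degree $k$. The two ingredients that made the $k=2$ case tractable were (a) high connectivity of the nonseparating curve complex $\CNosep_g$, which feeds the standard Quillen-type machinery and decomposes $\HH_k(\Mod_g(L);\Q)$ in terms of vertex stabilizers, and (b) a vanishing theorem for the twisted \emph{first} homology of $\Mod_g(L)$ that identifies the contributions coming from different $\Mod_g(L)$-orbits of nonseparating curves. Both ingredients need to be promoted to degree $k$.

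First I would run the equivariant spectral sequence for the action of $\Mod_g(L)$ on $\CNosep_g$. For $g \geq N(k)$ large enough, Harer's connectivity estimate for $\CNosep_g$ places enough of the $E^1$-page in the stable range that the direct analogue of \eqref{eqn:stability} holds: the natural map
$$\bigoplus_{[\gamma] \in (\CNosep_g/\Mod_g(L))^{(0)}} \HH_k((\Mod_g(L))_{\tilde{\gamma}};\Q) \longrightarrow \HH_k(\Mod_g(L);\Q)$$
is surjective. This step is essentially formal, a routine extension of the $k=2$ argument.

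The substantive part of the proof is to show that every summand above has the same image, so that a single summand already surjects. Fixing a basepoint curve $\gamma_0$, for any other nonseparating curve $\gamma'$ I would use connectivity of $\CNosep_g$ to pick an edge-path $\gamma_0, \gamma_1, \ldots, \gamma_m = \gamma'$ and reduce to comparing images coming from adjacent vertices. For an edge $\{\gamma, \gamma'\}$ of $\CNosep_g$, both one-curve stabilizers receive a map from the joint stabilizer $(\Mod_g(L))_{\{\gamma,\gamma'\}}$, and it would suffice to show that
$$\HH_k((\Mod_g(L))_{\{\gamma,\gamma'\}};\Q) \longrightarrow \HH_k((\Mod_g(L))_{\gamma};\Q)$$
becomes surjective after composing with the map to $\HH_k(\Mod_g(L);\Q)$. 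The cokernel is controlled, via the Hochschild--Serre spectral sequence for the short exact sequence relating $(\Mod_g(L))_{\gamma}$ to the mapping class group of $\Sigma_g$ cut along $\gamma$, by twisted homology groups of $\Mod_g(L)$ with coefficients in the $\Q$-homology of finite abelian covers of $\Sigma_g$.

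The hard part, and the reason the conjecture is presently open for $k \geq 3$, would be proving the required vanishing: one needs $\HH_j(\Mod_g(L); V) = 0$ for $0 < j \leq k-1$ and a suitable range of natural modules $V$ arising as $\Q$-homology of finite abelian covers of $\Sigma_g$. For $j=1$ this is exactly Lemma \ref{lemma:vanishing}, taken from \cite{PutmanVanishing}, and is precisely what makes Theorem \ref{theorem:stability} accessible; but for $j \geq 2$ no such result is currently available, and obtaining one appears to require genuinely new input, perhaps via a refined curve-complex argument exploiting the interplay among $\Mod_g(L)$, $\Torelli_g$, and $\Sp_{2g}(\Z,L)$, or via an induction on $j$ using a carefully chosen family of coefficient systems. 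By Lemma \ref{lemma:stabilitytrick}, such a vanishing theorem would immediately yield Conjecture \ref{conjecture:isomorphism}.
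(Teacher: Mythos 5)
This statement is labeled a \emph{conjecture} in the paper, not a theorem; the paper does not prove it, and in fact explicitly remarks (immediately after stating it) that while the analogue of \eqref{eqn:stability} follows by the same argument, ``it seems difficult to perform the necessary calculation to go from there to Conjecture \ref{conjecture:weakstability}.'' Your proposal is therefore not missing something that the paper supplies --- you have correctly diagnosed the problem as open, and your identification of the obstruction (absence of twisted vanishing results $\HH_j(\Mod_g(L);V)=0$ for $j\geq 2$, with $V$ the $\Q$-homology of abelian covers, analogous to the $j=1$ case from \cite{PutmanVanishing}) matches the paper's own view of where the difficulty lies. Your closing observation that such a vanishing theorem, combined with Lemma \ref{lemma:stabilitytrick}, would yield Conjecture \ref{conjecture:isomorphism} is also correct.

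One point worth flagging: in the $k=2$ case the paper's actual comparison mechanism is different from the one you sketch. You propose to compare the contributions of two vertices along an \emph{edge} of $\CNosep_g$ (i.e.\ two disjoint curves) via the joint stabilizer $(\Mod_g(L))_{\{\gamma,\gamma'\}}$. The paper instead compares the contributions of two curves $\gamma_1,\gamma_2$ that \emph{intersect once}: it chooses a subsurface $S\cong\Sigma_{g-1,1}$ disjoint from both (Figure \ref{figure:weakstability}.b) and uses the weak stability isomorphism $\HH_2(\Mod(S,L);\Q)\cong\HH_2(\Mod_{g,\gamma_i}(L);\Q)$ (Proposition \ref{proposition:weakstability}) to see that both contributions equal the image of $\HH_2(\Mod(S,L);\Q)$, then chains any two nonseparating curves together through curves intersecting pairwise once. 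This route avoids analyzing joint stabilizers of disjoint curves directly; your route would additionally have to contend with the cokernel of $\HH_k((\Mod_g(L))_{\{\gamma,\gamma'\}};\Q)\to\HH_k((\Mod_g(L))_\gamma;\Q)$, which is a separate calculation. Both strategies, however, bottom out at the same missing ingredient for $k\geq 3$: stability/vanishing for higher twisted homology of $\Mod_g(L)$ with coefficients in homology of abelian covers.
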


\noindent
Moreover, a similar argument establishes an appropriate analogue of \eqref{eqn:stability}.  However,
it seems difficult to perform the necessary calculation to go from there to Conjecture
\ref{conjecture:weakstability}.

Conjecture \ref{conjecture:isomorphism} is also related to the homology of $\Torelli_g$.  In particular,
we have the following folk conjecture/question.

\begin{question}[{Homology of Torelli}]
\label{question:torelli}
For a fixed $k'$, does there exists some $N$ such that if $g \geq N$, then $\HH_{k'}(\Torelli_g;\Q)$ is
a finite-dimensional vector space and the action of $\Sp_{2g}(\Z)$ on $\HH_{k'}(\Torelli_g;\Q)$
arising from the conjugation action of $\Mod_g$ on $\Torelli_g$ extends to a rational representation
of the algebraic group $\Sp_{2g}$?
\end{question}

Johnson \cite{JohnsonAbel} proved that the answer to Question \ref{question:torelli} is yes for $k'=1$.  
We claim that if Question \ref{question:torelli} had an affirmative answer for all $k' \leq k$, then 
it would provide a quick proof of Conjecture \ref{conjecture:isomorphism} for $k$.  
Thus Theorem \ref{theorem:mainh2} provides some additional evidence for an affirmative answer to
Question \ref{question:torelli}.

A sketch of the proof of this claim is as follows.  
We have a commutative diagram of short exact sequences
$$\begin{CD}
1 @>>> \Torelli_g @>>> \Mod_g(L) @>>> \Sp_{2g}(\Z,L) @>>> 1\\
@.     @VVV            @VVV           @VVV                @.\\
1 @>>> \Torelli_g @>>> \Mod_g    @>>> \Sp_{2g}(\Z)   @>>> 1
\end{CD}$$
This induces a map of the associated homology Hochschild-Serre spectral sequences.  
The Borel stability theorem mentioned above \cite{BorelStability1, BorelStability2} 
applies to the twisted
coefficient systems provided by Question \ref{question:torelli}.
Moreover, the range of stability is independent of the coefficient system.  
We conclude that our map of Hochschild-Serre spectral sequences is an isomorphism
on all terms in the $(k+1) \times k$ lower left hand corners of
the $E_2$ pages of these spectral sequences.  Zeeman's spectral sequence comparison theorem \cite{ZeemanSpec} then gives
the desired result.

\begin{remark}
The Borel stability theorem for twisted coefficient systems
is usually stated cohomologically, but since we are working over $\Q$
it has a dual homological version; see Theorem \ref{theorem:duality} below.
\end{remark}

\paragraph{Outline.}
We begin in \S \ref{section:preliminaries} with various preliminaries.  These include
a discussion of surfaces with punctures and boundary components,
some results about group homology with twisted coefficient systems, some facts about simplicial
complexes and combinatorial manifolds, and the Birman exact sequence.  We also
state Theorem \ref{theorem:mainh2}, which is a generalization of Theorem \ref{theorem:mainh2closed}
to surfaces with punctures and boundary components.  Next, in \S \ref{section:firsthomology}
we prove several theorems about the first homology groups of $\Mod_g(L)$ with various coefficient systems.
In \S \ref{section:closedlevel} we begin the proof of Theorem \ref{theorem:mainh2} by reducing the theorem
to the special case of the level $L$ subgroup of $\Mod_g$.  The next step is in \S \ref{section:stabilitytrick},
where we prove the equivalence of Theorems \ref{theorem:mainh2} and \ref{theorem:stability} (and more generally,
Conjectures \ref{conjecture:isomorphism} and \ref{conjecture:weakstability}).
We then give our proof that $\HH_2(\Mod_g(L);\Q)$ stabilizes in an appropriate sense, establishing
our main theorem.  This proof depends on two lemmas, which we prove in \S \ref{section:decompthm} --
\S \ref{section:weakstability}.

\paragraph{Notation and conventions.}
Throughout this paper, we will systematically confuse simple closed curves with their homotopy
classes.  Hence (based/unbased) curves are said to be simple closed curves if they are (based/unbased)
homotopic to simple closed curves, etc.  For $x,y \in \HH_1(\Sigma_{g,b};\Z/L)$, we will
denote by $i(x,y) \in \Z/L$ the algebraic intersection number of $x$ and $y$.  Also, for a
simple closed curve $\gamma$ we will denote by $T_{\gamma}$ the right Dehn twist about $\gamma$.

\paragraph{Acknowledgments.}
I wish to thank Mladen Bestvina, Ruth Charney, Pokman Cheung, Benson Farb, Joel Foisy, Richard Hain, Dan Margalit,
Robert Penner, and Karen Vogtmann for useful conversations and comments.

\section{Preliminaries}
\label{section:preliminaries}

\subsection{Surfaces with punctures and boundary components}
\label{section:puncturesboundary}

Let $\Sigma_{g,b}^n$ denote an oriented genus $g$ surface with $b$ boundary components and $n$ punctures.  Observe
that a homeomorphism of $\Sigma_{g,b}^n$ extends over the punctures in a natural way.
Define
\begin{align*}
\Mod_{g,b}^n = \pi_0(\{\text{$f \in \Homeo^{+}(\Sigma_{g,b}^n)$ $|$ }&\text{$f$ restricts to the identity on the boundary}\\
                                                                     &\text{and does not permute the punctures}\}).
\end{align*}
If $S$ is a surface such that $S \cong \Sigma_{g,b}^n$, then we will sometimes
write $\Mod(S)$ instead of $\Mod_{g,b}^n$.

Filling in the punctures and gluing discs to the boundary components induces an embedding
$\Sigma_{g,b}^n \hookrightarrow \Sigma_g$, and by extending elements of $\Mod_{g,b}^n$ by
the identity we get an induced map $\Mod_{g,b}^n \rightarrow \Mod_g$.  This
yields a canonical action of $\Mod_{g,b}^n$ on $\HH_1(\Sigma_g;R)$ for any ring $R$.  Define
\begin{align*}
\Torelli_{g,b}^n = &\{\text{$f \in \Mod_{g,b}^n$ $|$ $f$ acts trivially on $\HH_1(\Sigma_g;\Z)$}\},\\
\Mod_{g,b}^n(L) = &\{\text{$f \in \Mod_{g,b}^n$ $|$ $f$ acts trivially on $\HH_1(\Sigma_g;\Z/L)$}\}.
\end{align*}
If $S$ is a surface such that $S \cong \Sigma_{g,b}^n$, then we will sometimes
write $\Torelli(S)$ and $\Mod(S,L)$ to denote the subgroups $\Torelli_{g,b}^n$ and $\Mod_{g,b}^n(L)$
of $\Mod(S) \cong \Mod_{g,b}^n$.

\begin{remark}
We will frequently omit the $b$ or the $n$ from our notation if they vanish.
\end{remark}

We can now state a more general version of our main theorem.

\begin{theorem}[{Rational $\HH_2$ of finite-index subgroups, general case}]
\label{theorem:mainh2}
For $g \geq 5$ and $b,n \geq 0$, let $\Gamma$ be a finite index subgroup of $\Mod_{g,b}^n$ such that $\Torelli_{g,b}^n < \Gamma$.
Then $\HH_2(\Gamma;\Q) \cong \HH_2(\Mod_{g,b}^n;\Q) \cong \Q^{n+1}$.
\end{theorem}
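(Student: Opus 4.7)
The plan is to follow the strategy sketched in the introduction for the closed case, adapting each step to surfaces with punctures and boundary components.

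First I would reduce to the case $\Gamma = \Mod_{g,b}^n(L)$ for some $L \geq 2$. Since $\Gamma$ contains $\Torelli_{g,b}^n$ and has finite index in $\Mod_{g,b}^n$, its image in $\Sp_{2g}(\Z)$ is a finite-index subgroup and therefore contains $\Sp_{2g}(\Z,L)$ for some $L \geq 2$; hence $\Mod_{g,b}^n(L) \subseteq \Gamma \subseteq \Mod_{g,b}^n$ with all inclusions of finite index. The transfer homomorphism in rational group homology then yields surjections
\begin{equation*}
\HH_2(\Mod_{g,b}^n(L);\Q) \twoheadrightarrow \HH_2(\Gamma;\Q) \twoheadrightarrow \HH_2(\Mod_{g,b}^n;\Q) \cong \Q^{n+1},
\end{equation*}
the last identification being Harer's computation together with an unwinding of the Birman exact sequence to add the $n$ punctures. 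It therefore suffices to prove the upper bound $\Dim \HH_2(\Mod_{g,b}^n(L);\Q) \leq n+1$.

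I would next apply Lemma \ref{lemma:stabilitytrick} to reduce this bound to an appropriate generalization of Theorem \ref{theorem:stability} to $\Sigma_{g,b}^n$: namely, that for every nonseparating simple closed curve $\gamma \subset \Sigma_{g,b}^n$, the natural map
\begin{equation*}
\HH_2((\Mod_{g,b}^n(L))_{\gamma};\Q) \longrightarrow \HH_2(\Mod_{g,b}^n(L);\Q)
\end{equation*}
is surjective. The stabilizer on the left is accessible by cutting along $\gamma$, which presents it as an extension of a level-type subgroup on a genus $g-1$ surface with two extra boundary components by the central Dehn twist about $\gamma$; this is the framework in which the stability trick of Lemma \ref{lemma:stabilitytrick} converts a surjection into an isomorphism at the level of rational $\HH_2$.

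To prove this surjectivity I would mimic the Quillen-style argument used in the closed case. Running the Borel construction for the action of $\Mod_{g,b}^n(L)$ on the nonseparating curve complex $\CNosep$ (of $\Sigma_{g,b}^n$), and using the high connectivity of $\CNosep$ from Harer's theorem extended to the bordered and punctured setting, the associated spectral sequence shows that
\begin{equation*}
\bigoplus_{\gamma \in (\CNosep / \Mod_{g,b}^n(L))^{(0)}} \HH_2((\Mod_{g,b}^n(L))_{\tilde{\gamma}};\Q) \longrightarrow \HH_2(\Mod_{g,b}^n(L);\Q)
\end{equation*}
is surjective. The central (and hardest) remaining step is that $\Mod_{g,b}^n(L)$ does not act transitively on the vertex set of $\CNosep$, so one must show that all orbit summands on the left have the same image in the target. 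This I would address using Lemma \ref{lemma:vanishing}: any two nonseparating simple closed curves are conjugate by some $\phi \in \Mod_{g,b}^n$, and the obstruction to promoting such a $\phi$ to a class in $\Mod_{g,b}^n(L)$ compatible with $\HH_2$ lives in an $\HH_1$ of $\Mod_{g,b}^n(L)$ with coefficients in the first homology of an abelian cover of $\Sigma_{g,b}^n$, which vanishes. The main obstacle is precisely this orbit-comparison: the connectivity of $\CNosep$ and the spectral sequence machinery are essentially standard once the extensions to surfaces with punctures and boundary are set up, but the twisted-coefficient vanishing step is delicate and is what ultimately forces the passage through the abelian-cover homology.
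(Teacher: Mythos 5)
Your reduction to $\Gamma = \Mod_{g,b}^n(L)$ via the congruence subgroup property and the transfer matches the paper. After that, however, your route diverges substantially and contains genuine gaps. The paper does not run the stability machinery directly on the bordered/punctured surface; instead, it first reduces Theorem \ref{theorem:mainh2} to the \emph{closed} level-$L$ case (Theorem \ref{theorem:mainh2level}). Punctures are eliminated one at a time by Lemma \ref{lemma:eliminatepuncture} (a Gysin-sequence consequence of the central extension $1 \to \Z \to \Mod_{g,b+1}^n(L) \to \Mod_{g,b}^{n+1}(L) \to 1$; this is precisely where each of the $n$ extra copies of $\Q$ enters), and boundary components are eliminated via the Birman exact sequence and a Hochschild--Serre spectral sequence computation using Lemmas \ref{lemma:huhcoinv}, \ref{lemma:killoffuh}, and \ref{lemma:h1modluh}. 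Only after reaching the closed case does the paper invoke the stability trick, Proposition \ref{proposition:decompthm}, and Proposition \ref{proposition:weakstability}.

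Your plan of running the equivariant-homology argument directly on $\CNosep(\Sigma_{g,b}^n)$ has two concrete problems. First, Theorem \ref{theorem:equivarianthomologythm} needs the $2$-connectivity of the \emph{quotient} $\CNosep/\Mod_{g,b}^n(L)$, not merely of $\CNosep$ itself; you invoke only Harer's connectivity of $\CNosep$ and omit the quotient. The paper's Proposition \ref{proposition:cnosepmodlcon} proves this for the closed case via a nontrivial linear-algebraic argument that identifies $\CNosep_g/\Mod_g(L)$ with the complex $\Bases(g,L)$ of lax isotropic bases, and this identification (which rests on \cite[Lemma 6.2]{PutmanCutPaste}) is established there only for closed surfaces; you would need a bordered/punctured analogue and your sketch does not supply one. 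Second, the ``orbit-comparison'' step you describe does not match how the paper closes the gap: the paper proves the weak stability theorem (Proposition \ref{proposition:weakstability}), which shows that every summand $\HH_2(\Mod_{g,\gamma}(L);\Q)$ maps isomorphically from $\HH_2(\Mod(S,L);\Q)$ for a common subsurface $S$; the vanishing result Lemma \ref{lemma:vanishing} enters inside that proof, via the split Birman exact sequence of Theorem \ref{theorem:birmanexactsequencel2} and a Gysin-sequence diagram chase, rather than as an obstruction class as you describe. Finally, Lemma \ref{lemma:h1modlstab} (needed to verify the $\HH_1$-vanishing hypothesis of Theorem \ref{theorem:equivarianthomologythm} for edge stabilizers) is proved only for surfaces with boundary and no punctures; your approach would require a punctured analogue. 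In short, while the direct approach could perhaps be made to work, it would require re-proving several technical inputs in the bordered/punctured setting, and as written it leaves those re-proofs as unaddressed gaps; the paper's reduction to the closed case sidesteps all of them.
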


\subsection{Notation and basic facts about group homology}

We will make extensive use of group homology with twisted coefficient systems.  We now remind
the reader of the twisted analogues of several standard tool in untwisted group homology.
A basic reference is \cite{BrownCohomology}.

\paragraph{Coinvariants.}
Let $G$ be a group and $M$ be a $G$-module.  The module of coinvariants of this action, denoted
$M_G$, is the quotient of $M$ by the submodule generated by the set $\{\text{$m - g(m)$ $|$ $g \in G$, $m \in M$}\}$.
We have $\HH_0(G;M) \cong M_G$.

\paragraph{Long exact sequence.}
Let $G$ be a group and
$$0 \longrightarrow M_1 \longrightarrow M_2 \longrightarrow M_3 \longrightarrow 0$$
be a short exact sequence of $G$-modules.  There is then a long exact sequence
$$\cdots \longrightarrow \HH_{k+1}(G;M_3) \longrightarrow \HH_k(G;M_1) \longrightarrow \HH_k(G;M_2) \longrightarrow \HH_k(G;M_3) \longrightarrow \cdots.$$

\paragraph{The Hochschild-Serre spectral sequence.}
Let 
$$1 \longrightarrow K \longrightarrow G \longrightarrow Q \longrightarrow 1$$
be a short exact sequence of groups and let $M$ be a $G$-module.  The homology Hochschild-Serre spectral
sequence is a first quadrant spectral sequence converging to $\HH_{\ast}(G;M)$ whose $E^2$ page is of the
form
$$E^2_{p,q} = \HH_p(Q;\HH_q(K;M)).$$
If our short exact sequence of groups is split and $M$ is a trivial $G$-module, then all the differentials coming out of
the bottom row of this spectral sequence vanish.  Also, 
this spectral sequence is natural in the sense that a morphism of short exact sequences
induces a morphism of spectral sequences.  The edge groups have the following interpretations.
\begin{itemize}
\item $E^{\infty}_{p,0}$, a subgroup of $E^2_{p,0} \cong \HH_p(Q;\HH_0(K;M)) \cong \HH_p(Q;M_K)$, is
equal to 
$$\Image(\HH_p(G;M) \rightarrow \HH_p(Q;M_K)).$$
\item $E^{\infty}_{0,q}$, a quotient of $E^2_{0,q} \cong \HH_0(Q;\HH_q(K;M)) \cong (\HH_q(K;M))_Q$, is
isomorphic to 
$$\Image(\HH_q(K;M) \rightarrow \HH_q(G;M)).$$
\end{itemize}
A standard consequence of this spectral sequence is the natural 5-term exact sequence
$$\HH_2(G;M) \longrightarrow \HH_2(Q;M_K) \longrightarrow (\HH_1(K;M))_Q \longrightarrow \HH_1(G;M) \longrightarrow \HH_1(Q;M_K) \longrightarrow 0.$$
A similar spectral sequence exists in group cohomology.

\paragraph{The Gysin sequence.}
Let
\begin{equation}
\label{eqn:centralz}
1 \longrightarrow \Z \longrightarrow \Gamma \longrightarrow G \longrightarrow 1
\end{equation}
be a central extension of groups and let $R$ be a ring.  A standard consequence of the Hochschild-Serre spectral sequence
for \eqref{eqn:centralz} is the {\em Gysin sequence}, which is a natural exact sequence of the form
$$\cdots \longrightarrow \HH_{n-1}(G;R) \longrightarrow \HH_n(\Gamma;R) \longrightarrow \HH_n(G;R) \longrightarrow \HH_{n-2}(G;R) \longrightarrow \HH_{n-1}(\Gamma;R) \longrightarrow \cdots$$

\paragraph{Duality.}
We have the following duality between homology and cohomology.

\begin{theorem}
\label{theorem:duality}
Let $G$ be a group and let $M$ be a $G$-vector space over $\Q$.  Define $M' = \Hom(M,\Q)$.  Then for every
$k \geq 0$, there is a natural isomorphism $\HH^k(G;M') \cong \Hom(\HH_k(G;M),\Q)$.
\end{theorem}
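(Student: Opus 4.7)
The plan is to give the standard projective-resolution proof of this duality, exploiting the fact that the coefficients live in $\Q$-vector spaces.

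Let $P_\bullet \to \Z$ be a projective resolution of $\Z$ over $\Z G$. Then by definition
\[
\HH_k(G;M) = H_k(P_\bullet \otimes_{\Z G} M) \qquad \text{and} \qquad \HH^k(G;M') = H^k(\Hom_{\Z G}(P_\bullet,M')).
\]
The first step is to invoke the standard tensor-hom adjunction to produce, for each $n$, a natural isomorphism of $\Q$-vector spaces
\[
\Hom_{\Z G}(P_n, \Hom_{\Z}(M,\Q)) \;\cong\; \Hom_{\Z}(P_n \otimes_{\Z G} M, \Q),
\]
where a $G$-equivariant map $f \colon P_n \to \Hom(M,\Q)$ corresponds to the map $p \otimes m \mapsto f(p)(m)$. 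One checks directly from the formula that these isomorphisms assemble into an isomorphism of cochain complexes between $\Hom_{\Z G}(P_\bullet, M')$ and $\Hom_{\Z}(P_\bullet \otimes_{\Z G} M, \Q)$.

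The second step is to commute $H^k$ past $\Hom(-,\Q)$. Since $M$ is a $\Q$-vector space, the chain complex $C_\bullet := P_\bullet \otimes_{\Z G} M$ is a chain complex of $\Q$-vector spaces. The functor $\Hom_{\Z}(-,\Q) = \Hom_{\Q}(-,\Q)$ is exact on the category of $\Q$-vector spaces (equivalently, the Ext groups appearing in the universal coefficient theorem vanish), so it takes a short exact sequence of chain complexes to a short exact sequence and commutes with taking homology. Therefore there is a natural isomorphism
\[
H^k\bigl(\Hom_{\Z}(C_\bullet,\Q)\bigr) \;\cong\; \Hom_{\Z}\bigl(H_k(C_\bullet),\Q\bigr).
\]
Composing the isomorphisms from the two steps yields the desired natural isomorphism $\HH^k(G;M') \cong \Hom(\HH_k(G;M),\Q)$.

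There is no real obstacle here; the only thing to be slightly careful about is naturality, both in $G$ (i.e.\ the isomorphism is compatible with group homomorphisms and module maps inducing maps on (co)homology) and in short exact sequences of coefficients (so that it intertwines the long exact sequences). Both are routine to verify from the explicit formulas above since the tensor-hom adjunction and the homology-commutes-with-exact-functors isomorphism are themselves natural.
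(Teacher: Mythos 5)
Your proof is correct and is precisely the argument the paper is pointing to: the paper gives no proof of its own but refers the reader to \cite[Proposition~7.1]{BrownCohomology}, which establishes the Pontryagin-dual analogue (with $\Q/\Z$ in place of $\Q$) via exactly the two steps you use — the tensor-hom adjunction $\Hom_{\Z G}(P_\bullet, \Hom_\Z(M,\Q)) \cong \Hom_\Z(P_\bullet \otimes_{\Z G} M, \Q)$, followed by commuting $\Hom(-,\Q)$ past homology using that $\Q$ is injective (here, that $\Q$-vector spaces form a semisimple category). So this is the same proof, just written out rather than cited.
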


\begin{remark}
We do not know of a reference for this result, but the proof is essentially
identical to the proof of \cite[Proposition 7.1]{BrownCohomology}.
\end{remark}

\subsection{Simplicial complexes and combinatorial manifolds}
\label{section:simpcpx}

Our basic reference for simplicial complexes is \cite[Chapter 3]{Spanier}.  Let us recall the definition
of a simplicial complex given there.

\begin{definition}
A {\em simplicial complex} $X$ is a set of nonempty finite sets (called {\em simplices}) such that
if $\Delta \in X$ and $\emptyset \neq \Delta' \subset \Delta$, then $\Delta' \in X$.
The {\em dimension} of a simplex $\Delta \in X$ is $|\Delta|-1$ and is denoted $\Dim(\Delta)$.
For $k \geq 0$, the subcomplex of $X$ consisting of all simplices
of dimension at most $k$ (known as the {\em $k$-skeleton of $X$}) will be denoted $X^{(k)}$.
If $X$ and $Y$ are simplicial complexes, then a {\em simplicial map} from $X$ to $Y$ is a function
$f : X^{(0)} \rightarrow Y^{(0)}$ such that if $\Delta \in X$, then $f(\Delta) \in Y$.
\end{definition}

If $X$ is a simplicial complex, then we will define
the geometric realization $|X|$ of $X$ in the standard way (see \cite[Chapter 3]{Spanier}).  When
we say that $X$ has some topological property (e.g.\ simple-connectivity), we will mean that $|X|$ possesses
that property.

Next, we will need the following definitions.

\begin{definition}
Consider a simplex $\Delta$ of a simplicial complex $X$.
\begin{itemize}
\item The {\em star} of $\Delta$ (denoted $\Star_X(\Delta)$) is the subcomplex of $X$ consisting
of all $\Delta' \in X$ such that there is some $\Delta'' \in X$ with $\Delta,\Delta' \subset \Delta''$.
By convention, we will also define $\Star_X(\emptyset) = X$.
\item The {\em link} of $\Delta$ (denoted $\Link_X(\Delta)$) is the subcomplex of $\Star_X(\Delta)$
consisting of all simplices that do not intersect $\Delta$.  By convention, we will also define
$\Link_X(\emptyset) = X$.
\end{itemize}
\end{definition}

For $n \leq -1$, we will say that the empty set is both an $n$-sphere and a closed $n$-ball.  Also, if $X$
is a space then we will say that $\pi_{-1}(X)=0$ if $X$ is nonempty and that $\pi_{k}(X)=0$ for all $k \leq -2$.
With these conventions, it is true for all $n \in \Z$ that
a space $X$ satisfies $\pi_n(X)=0$ if and only if every map of an $n$-sphere into $X$ can be extended to a map
of a closed $(n+1)$-ball into $X$.

Finally, we will need the following definition.  A basic reference is \cite{RourkeSanderson}.

\begin{definition}
For $n \geq 0$, a {\em combinatorial $n$-manifold} $M$ is a nonempty simplicial complex that
satisfies the following inductive property.  If $\Delta \in M$, then $\Dim(\Delta) \leq n$.
Additionally, if $n-\Dim(\Delta)-1 \geq 0$, then $\Link_M(\Delta)$ is a combinatorial $(n-\Dim(\Delta)-1)$-manifold
homeomorphic to either an $(n-\Dim(\Delta)-1)$-sphere or a closed $(n-\Dim(\Delta)-1)$-ball.  We will denote by
$\partial M$ the subcomplex of $M$ consisting of all simplices $\Delta$ such that $\Dim(\Delta) < n$ and such that
$\Link_M(\Delta)$ is homeomorphic to a closed $(n-\Dim(\Delta)-1)$-ball.
If $\partial M = \emptyset$ then $M$  is said to be {\em closed}.  A
combinatorial $n$-manifold homeomorphic to an $n$-sphere (resp. a closed $n$-ball) will be called
a {\em combinatorial $n$-sphere} (resp. a {\em combinatorial $n$-ball}).
\end{definition}

It is well-known that if $\partial M \neq \emptyset$, then $\partial M$ is a closed combinatorial $(n-1)$-manifold
and that if $B$ is a combinatorial $n$-ball, then $\partial B$ is a combinatorial $(n-1)$-sphere.

\begin{warning}
There exist simplicial complexes that are homeomorphic to manifolds but are {\em not} combinatorial
manifolds.
\end{warning}

The following is an immediate consequence of the Zeeman's extension \cite{ZeemanSimp} of
the simplicial approximation theorem.

\begin{lemma}
\label{lemma:simpapprox}
Let $X$ be a simplicial complex and $n \geq 0$.  The following hold.
\begin{enumerate}
\item Every element of $\pi_n(X)$ is represented by a simplicial map $S \rightarrow X$,
where $S$ is a combinatorial $n$-sphere.
\item If $S$ is a combinatorial $n$-sphere and $f : S \rightarrow X$ is a nullhomotopic
simplicial map, then there is a combinatorial
$(n+1)$-ball $B$ with $\partial B = S$ and a simplicial map $g : B \rightarrow X$ such that $g|_{S} = f$.
\end{enumerate}
\end{lemma}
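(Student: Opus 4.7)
The plan is to derive both parts directly from Zeeman's relative simplicial approximation theorem \cite{ZeemanSimp}. Recall that this theorem says that if $K$ is a simplicial complex with subcomplex $L \subset K$, $Y$ is a simplicial complex, and $\varphi : |K| \to |Y|$ is a continuous map whose restriction to $|L|$ is simplicial with respect to the given triangulations, then there exist a subdivision $K'$ of $K$ that does not subdivide $L$ (so $L$ remains a subcomplex of $K'$) and a simplicial map $K' \to Y$ that agrees with $\varphi$ on $|L|$ and is homotopic to $\varphi$ rel $|L|$. We combine this with the standard piecewise-linear fact \cite{RourkeSanderson} that a subdivision of a combinatorial $n$-manifold is again a combinatorial $n$-manifold; in particular, any subdivision of a combinatorial $n$-sphere (resp.\ $n$-ball) is again a combinatorial $n$-sphere (resp.\ $n$-ball), and if the subdivision does not alter the simplicial structure of the boundary, then the boundary is unchanged.

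For part (1), the cases $n \leq -1$ follow from our conventions, so assume $n \geq 0$. Represent an element of $\pi_n(X)$ by a continuous map $\sigma$ from the topological $n$-sphere to $|X|$, and give the $n$-sphere any combinatorial $n$-sphere structure $S_0$, for example $S_0 = \partial \Delta^{n+1}$. Applying Zeeman's theorem with $L = \emptyset$ produces a subdivision $S$ of $S_0$ together with a simplicial map $S \to X$ homotopic to $\sigma$; by the piecewise-linear remark above, $S$ is itself a combinatorial $n$-sphere, and the output map represents the original element of $\pi_n(X)$.

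For part (2), let $f : S \to X$ be a simplicial map from a combinatorial $n$-sphere and choose a nullhomotopy to obtain a continuous extension $\bar{f} : |B_0| \to |X|$, where $B_0$ is the simplicial cone on $S$. The cone on a combinatorial $n$-sphere is a combinatorial $(n+1)$-ball with boundary $S$, so $\partial B_0 = S$. Apply Zeeman's theorem to $\bar{f}$ with $L = S$, on which $\bar{f}$ is already simplicial. The output is a subdivision $B$ of $B_0$ that leaves $S$ untouched, together with a simplicial map $g : B \to X$ such that $g|_S = f$; by the piecewise-linear remark, $B$ is a combinatorial $(n+1)$-ball with $\partial B = S$, completing the proof. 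The only nonroutine step is the bookkeeping that subdivisions preserve the combinatorial manifold structure and leave the boundary intact when carried out rel $\partial$, and this is handled uniformly by the standard PL facts cited.
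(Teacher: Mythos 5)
Your proposal is correct and follows the same approach as the paper, which simply cites Zeeman's relative simplicial approximation theorem \cite{ZeemanSimp} and gives no further detail. You have filled in exactly the routine bookkeeping the paper leaves implicit: applying Zeeman to $\partial\Delta^{n+1}$ with $L=\emptyset$ in part (1), applying it to the simplicial cone $B_0 = v*S$ rel $L=S$ in part (2), and noting that subdivisions of combinatorial manifolds remain combinatorial manifolds (and that the cone on a combinatorial $n$-sphere is a combinatorial $(n+1)$-ball with that sphere as boundary).
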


\subsection{The Birman exact sequence and stabilizers of simple closed curves}
\label{section:birmanexactsequence}

We will make heavy use of two analogues for $\Mod_{g,b}^n(L)$ of the classical
Birman exact sequence.  We start by recalling the statement of one version of the
classical case.

\begin{theorem}[{Johnson, \cite{JohnsonFinite}}]
\label{theorem:birmanexactsequence}
Fix $g \geq 2$ and $b,n \geq 0$.  Gluing a disc to a boundary
component $\beta$ of $\Sigma_{g,b+1}^n$ induces an exact sequence
$$1 \longrightarrow \pi_1(U\Sigma_{g,b}^n) \longrightarrow \Mod_{g,b+1}^n \longrightarrow \Mod_{g,b}^n \longrightarrow 1,$$
where $U\Sigma_{g,b}^n$ is the unit tangent bundle of $\Sigma_{g,b}^n$.  If $b \geq 1$, then this sequence
splits via a map $\Mod_{g,b}^n \rightarrow \Mod_{g,b+1}^n$ induced by an embedding 
$\Sigma_{g,b}^n \hookrightarrow \Sigma_{g,b+1}^n$ such that 
$\overline{\Sigma_{g,b+1}^n \setminus \Sigma_{g,b}^n}$ is homeomorphic to $\Sigma_{0,3}$ and contains $\beta$
(see Figure \ref{figure:birman}.a).  
Finally, in all cases we have an inclusion $\pi_1(U\Sigma_{g,b}^n) < \Torelli_{g,b+1}^n$.
\end{theorem}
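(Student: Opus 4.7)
The plan is to derive the exact sequence from a fiber bundle of diffeomorphism groups, following the standard Birman--Earle--Eells approach. Let $E$ denote the space of orientation-preserving smooth embeddings of a small closed disk $D^2$ into the interior of $\Sigma_{g,b}^n$, with image disjoint from the punctures. Sending an embedding to its center together with the direction of its derivative exhibits $E$ as homotopy equivalent to $U\Sigma_{g,b}^n$. The group $\Diff^+(\Sigma_{g,b}^n,\partial)$ of orientation-preserving diffeomorphisms that fix the boundary pointwise and the punctures setwise acts transitively on $E$, and the stabilizer of a chosen embedding $e_0$ is naturally identified with $\Diff^+(\Sigma_{g,b+1}^n,\partial)$ by declaring $\beta$ to coincide with the image of $\partial D^2$ under $e_0$. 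This produces a fiber bundle
\[
\Diff^+(\Sigma_{g,b+1}^n,\partial) \longrightarrow \Diff^+(\Sigma_{g,b}^n,\partial) \longrightarrow E \simeq U\Sigma_{g,b}^n.
\]
By the theorems of Earle--Eells and Earle--Schatz, for $g \geq 2$ the identity components of these diffeomorphism groups are contractible; in particular their $\pi_0$ recovers the relevant mapping class group and their $\pi_1$ vanishes. Extracting the appropriate segment of the long exact sequence of homotopy groups of the fibration then yields the desired short exact sequence.

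For the splitting when $b \geq 1$, I would argue directly from the picture in Figure \ref{figure:birman}.a. Fix an embedding $\iota : \Sigma_{g,b}^n \hookrightarrow \Sigma_{g,b+1}^n$ whose closed complement $P$ is a pair of pants whose three boundary components are $\beta$, a boundary component of $\iota(\Sigma_{g,b}^n)$, and a boundary component of $\Sigma_{g,b+1}^n$ distinct from $\beta$; the hypothesis $b \geq 1$ supplies this third boundary component. Extending each mapping class in $\Mod_{g,b}^n$ by the identity on $P$ defines a homomorphism $\Mod_{g,b}^n \to \Mod_{g,b+1}^n$, and composing with the disc-gluing quotient is manifestly the identity, so this is the required section.

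To verify the Torelli containment, I would identify the image of $\pi_1(U\Sigma_{g,b}^n)$ inside $\Mod_{g,b+1}^n$ in terms of standard generators. The fiber $S^1 \hookrightarrow U\Sigma_{g,b}^n$ maps to the Dehn twist $T_\beta$, while a loop $\gamma \in \pi_1(\Sigma_{g,b}^n)$ in the base lifts to the ``point-push'' mapping class that drags $\beta$ around $\gamma$. For a simple $\gamma$, the point-push equals $T_a T_b^{-1}$, where $a$ and $b$ are the two boundary components of a regular neighborhood of $\gamma \cup \beta$. Once the punctures are filled and the remaining boundary components of $\Sigma_{g,b+1}^n$ are capped off to form $\Sigma_g$, the curves $a$ and $b$ cobound a subsurface whose only additional boundary component is $\beta$, which is nullhomologous since it bounds a disk; hence $[a]=[b]$ in $\HH_1(\Sigma_g;\Z)$ and $T_a T_b^{-1}$ acts trivially on $\HH_1(\Sigma_g;\Z)$. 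The twist $T_\beta$ is likewise a twist about a nullhomologous curve in $\Sigma_g$. An arbitrary element of $\pi_1(\Sigma_{g,b}^n)$ is a product of loops represented by simple closed curves, so its point-push is a corresponding product of such generators, giving $\pi_1(U\Sigma_{g,b}^n) < \Torelli_{g,b+1}^n$.

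The main obstacle is the identification of the fiber bundle of diffeomorphism groups together with the appeal to the Earle--Eells/Earle--Schatz contractibility theorems, which concentrate all of the analytic content of the statement; once these are in hand, the homotopy-theoretic extraction of the sequence, the pair-of-pants splitting, and the curve-theoretic verification of Torelli triviality are all routine.
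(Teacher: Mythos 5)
The paper does not supply a proof of this theorem: it cites the result to Johnson \cite{JohnsonFinite} and moves on. Your proposal is therefore necessarily a reconstruction rather than a comparison, and the fibration argument you give is the standard Birman--Earle--Eells proof, which is indeed how Johnson establishes the statement. The chain of reductions --- embedding space of disks $E \simeq U\Sigma_{g,b}^n$, fibration $\Diff^+(\Sigma_{g,b+1}^n,\partial) \to \Diff^+(\Sigma_{g,b}^n,\partial) \to E$, contractibility of identity components via Earle--Eells (closed case) and Earle--Schatz (boundary case), long exact sequence of homotopy groups --- is correct. One cosmetic point: the paper defines $\Mod_{g,b}^n$ using $\Homeo^+$ while you work with $\Diff^+$; this is harmless since the two have the same $\pi_0$ for surfaces, but it is worth a sentence. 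The pair-of-pants splitting for $b \geq 1$ is exactly right.

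Your verification of $\pi_1(U\Sigma_{g,b}^n) < \Torelli_{g,b+1}^n$ is correct but much longer than it needs to be. By the paper's conventions, $\Torelli_{g,b+1}^n$ is the kernel of the composite $\Mod_{g,b+1}^n \to \Mod_g \to \Aut(\HH_1(\Sigma_g;\Z))$ obtained by gluing discs to \emph{all} boundary components and filling \emph{all} punctures. This composite visibly factors through the disc-gluing map $\Mod_{g,b+1}^n \to \Mod_{g,b}^n$ of the theorem, since one may cap $\beta$ first. Therefore the entire kernel $\pi_1(U\Sigma_{g,b}^n)$ of that map automatically lands in $\Torelli_{g,b+1}^n$, with no need to compute the images of point-pushes or the fiber class as bounding pair maps and nullhomologous twists. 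That said, your generator-level analysis is accurate and would in fact be needed for the finer statements elsewhere in the paper (for instance, to understand the Johnson homomorphism restricted to the kernel), so it is not wasted effort --- it is simply not required for this particular containment.
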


\begin{remark}
The mapping class associated to an element of $\pi_1(U\Sigma_{g,b}^n)$ ``drags'' the boundary
component along a path while allowing it to rotate.
\end{remark}

Since in Theorem \ref{theorem:birmanexactsequence} we have $\pi_1(U\Sigma_{g,b}^n) < \Torelli_{g,b+1}^n < \Mod_{g,b+1}^n(L)$,
the following is an immediate corollary.

\begin{corollary}
\label{corollary:birmanexactsequencel1}
Fix $g \geq 2$, $L \geq 2$, and $b,n \geq 0$.  Gluing a disc to a boundary
component of $\Sigma_{g,b+1}^n$ induces an exact sequence
$$1 \longrightarrow \pi_1(U\Sigma_{g,b}^n) \longrightarrow \Mod_{g,b+1}^n(L) \longrightarrow \Mod_{g,b}^n(L) \longrightarrow 1,$$
where $U\Sigma_{g,b}^n$ is the unit tangent bundle of $\Sigma_{g,b}^n$.  If $b \geq 1$, then this sequence
splits.
\end{corollary}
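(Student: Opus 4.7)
The corollary is essentially a diagram chase using Theorem \ref{theorem:birmanexactsequence}. My plan is to start with the Birman exact sequence
$$1 \longrightarrow \pi_1(U\Sigma_{g,b}^n) \longrightarrow \Mod_{g,b+1}^n \longrightarrow \Mod_{g,b}^n \longrightarrow 1$$
and restrict each term to its intersection with $\Mod^n_{\bullet}(L)$. The key structural observation is that capping off the boundary component $\beta$ with a disc and then further capping punctures / boundaries to obtain $\Sigma_g$ gives the same surface as capping $\Sigma_{g,b+1}^n$ directly to $\Sigma_g$. Hence the action of a mapping class in $\Mod_{g,b+1}^n$ on $\HH_1(\Sigma_g;\Z/L)$ agrees with the action of its image in $\Mod_{g,b}^n$ on the same group, so the Birman quotient map sends $\Mod_{g,b+1}^n(L)$ into $\Mod_{g,b}^n(L)$.

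With the map constructed, I would verify exactness. For the kernel, Theorem \ref{theorem:birmanexactsequence} tells us $\pi_1(U\Sigma_{g,b}^n) < \Torelli_{g,b+1}^n$, and since $\Torelli_{g,b+1}^n < \Mod_{g,b+1}^n(L)$ by definition, the inclusion $\pi_1(U\Sigma_{g,b}^n) \hookrightarrow \Mod_{g,b+1}^n(L)$ makes sense, and any element of the kernel of $\Mod_{g,b+1}^n(L)\to\Mod_{g,b}^n(L)$ is a fortiori in the kernel of the larger Birman map, hence lies in $\pi_1(U\Sigma_{g,b}^n)$. For surjectivity, given $f\in\Mod_{g,b}^n(L)$, lift it to some $\tilde f\in\Mod_{g,b+1}^n$ using Theorem \ref{theorem:birmanexactsequence}; by the structural observation above, $\tilde f$ and $f$ act identically on $\HH_1(\Sigma_g;\Z/L)$, so $\tilde f\in\Mod_{g,b+1}^n(L)$.

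For the splitting when $b\ge 1$, the section $s:\Mod_{g,b}^n\to\Mod_{g,b+1}^n$ provided by Theorem \ref{theorem:birmanexactsequence} is induced by a subsurface embedding $\Sigma_{g,b}^n\hookrightarrow\Sigma_{g,b+1}^n$ via extension by the identity. Such an extension does not change the action on $\HH_1(\Sigma_g;\Z/L)$, so $s$ sends $\Mod_{g,b}^n(L)$ into $\Mod_{g,b+1}^n(L)$, providing the required section.

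There is no real obstacle here: every step is a routine compatibility check, and once one notices that the map ``cap $\beta$ then fill everything'' equals ``fill everything,'' the rest is automatic.
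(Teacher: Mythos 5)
Your proof is correct and takes essentially the same approach as the paper, which simply observes that $\pi_1(U\Sigma_{g,b}^n) < \Torelli_{g,b+1}^n < \Mod_{g,b+1}^n(L)$ and declares the result an immediate corollary of Theorem \ref{theorem:birmanexactsequence}; you have merely spelled out the routine compatibility checks (the quotient map lands in $\Mod_{g,b}^n(L)$, surjectivity of the restricted map, and the section respecting the level structure) that the paper leaves to the reader.
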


\Figure{figure:birman}{Birman}{a. The Birman exact sequence for $\Mod_{3,2}$ splits via the map
$\Mod_{3,1} \rightarrow \Mod_{3,2}$ induced by the indicated inclusion map 
$\Sigma_{3,1} \hookrightarrow \Sigma_{3,2}$.
\CaptionSpace b. $\Sigma_{2,1}$ embedded in $\Sigma_{2,2}^3$ as in the proof
of Lemma \ref{lemma:huhcoinv}}

The other version of the Birman exact sequence we need will help us describe the stabilizer in $\Mod_g(L)$
of simple closed nonseparating curves (there are analogous results for surfaces with boundary, but we
will only need the closed case so we will not state them).  First, some definitions.  For later use,
we will make them a bit more general than needed for our theorem.

\begin{definition}
\label{definition:curvestab}
Let $\gamma_1,\ldots,\gamma_k$ be disjoint simple closed curves on $\Sigma_{g,b}$ such that
$\Sigma_{g,b} \setminus (\gamma_1 \cup \cdots \cup \gamma_k)$ is connected and let 
$N$ be an open regular neighborhood of
$\gamma_1 \cup \cdots \cup \gamma_k$.
Define $\Sigma_{g,b,\gamma_1,\ldots,\gamma_k} = \Sigma_g \setminus N$ and
$\Mod_{g,b,\gamma_1,\ldots,\gamma_k} = \Mod(\Sigma_{g,b,\gamma_1,\ldots,\gamma_k})$.  
Observe that $\Sigma_{g,b,\gamma_1,\ldots,\gamma_k} \cong \Sigma_{g-k,2k+b}$ and 
$\Mod_{g,b,\gamma_1,\ldots,\gamma_k} \cong \Mod_{g-k,2k+b}$.  Next,
let $i : \Mod_{g,b,\gamma_1,\ldots,\gamma_k} \rightarrow \Mod_{g,b}$ be the map induced by the inclusion 
$\Sigma_{g,b,\gamma_1,\ldots,\gamma_k} \hookrightarrow \Sigma_{g,b}$.  We then define
\begin{align*}
\Torelli_{g,b,\gamma_1,\ldots,\gamma_k} &:= i^{-1}(\Torelli_{g,b}),\\
\Mod_{g,b,\gamma_1,\ldots,\gamma_k}(L) &:= i^{-1}(\Mod_{g,b}(L)) \ \ \ \ \ (L \geq 2).
\end{align*}
We will frequently omit the $b$ when it equals $0$.
\end{definition}

\begin{remark}
Observe that $\Torelli_{g,b,\gamma_1,\ldots,\gamma_k} \ncong \Torelli_{g-k,2k+b}$ and 
$\Mod_{g,b,\gamma_1,\ldots,\gamma_k}(L) \ncong \Mod_{g-k,2k+b}(L)$ for $L \geq 2$.  
\end{remark}

A theorem like Corollary \ref{corollary:birmanexactsequencel1} cannot be true as stated for
$\Torelli_{g,\gamma}$ or $\Mod_{g,\gamma}(L)$ since 
the twist about a boundary component is not in $\Torelli_{g,\gamma}$ or
$\Mod_{g,\gamma}(L)$.  In \cite[Theorem 4.1]{PutmanCutPaste}, however, the author proved
an analogue of Corollary \ref{corollary:birmanexactsequencel1} for $\Torelli_{g,\gamma}$.  We will
need a version of this for $\Mod_{g,\gamma}(L)$.  

Let $\gamma$ be a nonseparating simple closed curve on $\Sigma_g$.  Since
$\Mod_{g,\gamma} \cong \Mod_{g-1,2}$, Theorem \ref{theorem:birmanexactsequence} gives a split 
short exact sequence
\begin{equation}
\label{eqn:modgammaexactseq}
1 \longrightarrow \pi_1(U\Sigma_{g-1,1}) \longrightarrow \Mod_{g,\gamma} \longrightarrow \Mod_{g-1,1} \longrightarrow 1.
\end{equation}
It is easy to see that the map $\Mod_{g,\gamma} \rightarrow \Mod_{g-1,1}$ restricts to a surjection 
$\Mod_{g,\gamma}(L) \rightarrow \Mod_{g-1,1}(L)$.  We then have the following theorem.  Recall
that if $x$ is a simple closed curve on $\Sigma_g$, then $T_x \in \Mod_g$ denotes the right Dehn twist about $x$.
Observe that $T_x^L \in \Mod_g(L)$.

\begin{theorem}
\label{theorem:birmanexactsequencel2}
For $g \geq 3$ and $L \geq 2$, let $\gamma$ be a nonseparating simple closed curve on $\Sigma_{g}$.  Then gluing a disc
to a boundary component $\beta$ of $\Sigma_{g,\gamma}$ induces
a split exact sequence
$$1 \longrightarrow \overline{K}_{g-1,1} \longrightarrow \Mod_{g,\gamma}(L) \longrightarrow \Mod_{g-1,1}(L) \longrightarrow 1.$$
Here $\overline{K}_{g-1,1}$ is a subgroup of $\pi_1(U\Sigma_{g-1,1})$ that fits into a split exact sequence
$$1 \longrightarrow \Z \longrightarrow \overline{K}_{g-1,1} \longrightarrow K_{g-1,1} \longrightarrow 1,$$
where $\Z$ is generated by $T_{\beta}^L$ and 
$K_{g-1,1}$ is the kernel of the map $\pi_1(\Sigma_{g-1,1}) \rightarrow \HH_1(\Sigma_{g-1,1};\Z/L)$.
\end{theorem}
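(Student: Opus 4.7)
The plan is to intersect the Birman sequence \eqref{eqn:modgammaexactseq} with $\Mod_{g,\gamma}(L)$, and then identify the resulting kernel by computing directly how $\pi_1(U\Sigma_{g-1,1})$ acts on $\HH_1(\Sigma_g;\Z/L)$. The splitting $s:\Mod_{g-1,1}\hookrightarrow\Mod_{g,\gamma}$ of Theorem~\ref{theorem:birmanexactsequence} is realised by a subsurface inclusion $\Sigma_{g-1,1}\hookrightarrow\Sigma_{g,\gamma}$ whose pair-of-pants complement contains both $\beta$ and $\beta'$; any mapping class in its image is the identity near $\gamma$ in $\Sigma_g$, and so fixes $[\gamma]$ and a dual class $[\delta]\in\HH_1(\Sigma_g;\Z)$ (represented by a core arc of the pair of pants) pointwise, acting on the remaining homology only through its image in $\Mod_{g-1,1}$. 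Hence $s$ carries $\Mod_{g-1,1}(L)$ into $\Mod_{g,\gamma}(L)$, which simultaneously yields the surjectivity (already noted above the theorem) and the splitting of the outer sequence. Defining $\overline{K}_{g-1,1}$ to be the kernel of this restricted map gives $\overline{K}_{g-1,1}=\pi_1(U\Sigma_{g-1,1})\cap\Mod_{g,\gamma}(L)$ and the outer sequence in its entirety.

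The heart of the argument is then to locate $\overline{K}_{g-1,1}$ inside $\pi_1(U\Sigma_{g-1,1})$ using the standard split central extension
\begin{equation*}
1\longrightarrow\Z\longrightarrow\pi_1(U\Sigma_{g-1,1})\longrightarrow\pi_1(\Sigma_{g-1,1})\longrightarrow 1,
\end{equation*}
which splits because $\Sigma_{g-1,1}$ has boundary and hence trivialisable unit tangent bundle. Under the Birman embedding, the fibre generator maps to $T_\beta$ and each $\bar\alpha\in\pi_1(\Sigma_{g-1,1})$ lifts (via the chosen splitting) to the point-pushing map $\phi_{\bar\alpha}$ dragging $\beta$ along $\bar\alpha$. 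Fixing a symplectic basis $\{a_i,b_i\}_{i=1}^{g-1}$ for $V':=\HH_1(\Sigma_{g-1,1};\Z)$ arising from the subsurface and extending to a symplectic basis of $\HH_1(\Sigma_g;\Z)$ by adjoining $[\gamma]$ and a dual class $[\delta]$ with $i([\delta],[\gamma])=1$, I realise $\phi_{\bar\alpha}$ as $T_{c_+}T_{c_-}^{-1}$, where $c_\pm$ are the two non-$\beta$ boundary components of a regular neighbourhood of $\bar\alpha\cup\beta$ in $\Sigma_{g,\gamma}$. Combined with the usual twist formula, this yields
\begin{align*}
T_\beta:\quad & \eta\longmapsto\eta+i(\eta,[\gamma])[\gamma],\\
\phi_{\bar\alpha}:\quad & \eta\longmapsto\eta+i(\eta,[\bar\alpha])[\gamma]\text{ for }\eta\in V',\quad [\delta]\longmapsto[\delta]+[\bar\alpha],\quad [\gamma]\longmapsto[\gamma].
\end{align*}
A quick check confirms that these matrices are symplectic, providing a useful sanity test.

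Given these formulas, the identification of $\overline{K}_{g-1,1}$ reduces to elementary linear algebra: writing a general element of $\pi_1(U\Sigma_{g-1,1})$ as $T_\beta^n\cdot\phi_{\bar\alpha}$, triviality on $\HH_1(\Sigma_g;\Z/L)$ becomes (i) $i(\eta,[\bar\alpha])\equiv 0\pmod L$ for every $\eta\in V'$, which by nondegeneracy of the intersection pairing on $V'$ is equivalent to $[\bar\alpha]\equiv 0$ in $\HH_1(\Sigma_{g-1,1};\Z/L)$, i.e.\ $\bar\alpha\in K_{g-1,1}$; and (ii) once (i) holds, the $[\gamma]$-coefficient of the image of $[\delta]$ forces $n\equiv 0\pmod L$. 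Thus $\overline{K}_{g-1,1}$ is exactly the preimage of $K_{g-1,1}$ under $\pi_1(U\Sigma_{g-1,1})\to\pi_1(\Sigma_{g-1,1})$ intersected with $L\Z$ in the fibre, giving the inner split exact sequence with kernel $\langle T_\beta^L\rangle$; its splitting is inherited from the splitting of the central extension above. The principal obstacle I expect is the geometric verification of the action formula for $\phi_{\bar\alpha}$ on $[\delta]$: since $[\delta]$ does not exist intrinsically inside the cut surface $\Sigma_{g,\gamma}$, one must pick a concrete arc from $\beta$ to $\beta'$, track its endpoint on $\beta$ carefully as $\beta$ is dragged around $\bar\alpha$, and verify that no parasitic multiples of $[\gamma]$ appear (any such contribution would be absorbed by readjusting the chosen splitting of the fibre sequence).
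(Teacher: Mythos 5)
Your proof takes essentially the same route as the paper's: both intersect the Birman sequence \eqref{eqn:modgammaexactseq} with $\Mod_{g,\gamma}(L)$, verify that the image in $\Mod_{g-1,1}$ is $\Mod_{g-1,1}(L)$, and then locate $\overline{K}_{g-1,1}$ inside the central extension $1 \to \Z \to \pi_1(U\Sigma_{g-1,1}) \to \pi_1(\Sigma_{g-1,1}) \to 1$. The difference is that the paper refers the key identification $K_{g-1,1} = \Ker(\pi_1(\Sigma_{g-1,1}) \to \HH_1(\Sigma_{g-1,1};\Z/L))$ to the analogous computation in \cite[Theorem 4.1]{PutmanCutPaste}, whereas you carry it out directly by computing the action of $T_\beta^n\phi_{\bar\alpha}$ on $\HH_1(\Sigma_g;\Z/L)$. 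Your explicit observation that the Birman splitting $s : \Mod_{g-1,1} \hookrightarrow \Mod_{g,\gamma}$ carries $\Mod_{g-1,1}(L)$ into $\Mod_{g,\gamma}(L)$ (because $s(f)$ fixes $[\gamma]$ and $[\delta]$ and acts on the symplectic complement through $f$) is a genuine detail the paper glosses over; it is what makes the outer sequence split rather than merely exact, and is also what the paper implicitly uses when it asserts surjectivity onto $\Mod_{g-1,1}(L)$.

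Two small corrections. First, ``its splitting is inherited from the splitting of the central extension above'' is imprecise: the fixed splitting $s' : \pi_1(\Sigma_{g-1,1}) \to \pi_1(U\Sigma_{g-1,1})$ need not carry $K_{g-1,1}$ into $\overline{K}_{g-1,1}$, since, as you yourself note, $\phi_{\bar\alpha} = s'(\bar\alpha)$ may need a correction $T_\beta^{n(\bar\alpha)}$ before lying in $\Mod_{g,\gamma}(L)$, and there is no reason this correction is a homomorphism in $\bar\alpha$. The correct (and easier) reason the inner sequence splits is that $K_{g-1,1}$ is a subgroup of the free group $\pi_1(\Sigma_{g-1,1})$, hence itself free, so every central $\Z$-extension of it splits; this is exactly the freeness argument the paper invokes for the ambient central extension. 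Second, the formula $\phi_{\bar\alpha}([\delta]) = [\delta] + [\bar\alpha]$ is, as you anticipate, only valid modulo $\Z[\gamma]$ for an arbitrary choice of $\delta$ and of splitting, but this does not disturb the linear-algebraic deduction: once $[\bar\alpha]\equiv 0 \pmod{L}$ the extra $[\gamma]$-coefficient is absorbed by the free choice of $n$, which is all you use.
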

\begin{proof}
Restricting exact sequence \eqref{eqn:modgammaexactseq} to $\Mod_{g,\gamma}(L) < \Mod_{g,\gamma}$,
we get a split exact sequence
$$1 \longrightarrow \overline{K}_{g-1,1} \longrightarrow \Mod_{g,\gamma}(L) \longrightarrow Q \longrightarrow 1,$$
where $Q = \Image(\Mod_{g,\gamma}(L) \rightarrow \Mod_{g-1,1}) \cong \Mod_{g-1,1}(L)$ and 
$\overline{K}_{g-1,1} = \pi_1(U\Sigma_{g-1,2}) \cap \Mod_{g,\gamma}(L)$.  We must prove the 
indicated characterization of $\overline{K}_{g-1,1} < \pi_1(U\Sigma_{g-1,1})$.

We have another split exact sequence
$$1 \longrightarrow \Z \longrightarrow \pi_1(U\Sigma_{g-1,1}) \longrightarrow \pi_1(\Sigma_{g-1,1}) \longrightarrow 1,$$
where $\Z$ is generated by $T_{\beta}$ (we remark that the splitting of this exact sequence is not natural -- from
a group theoretic point of view, its existence simply follows from the fact that $\pi_1(\Sigma_{g-1,1})$ is free).  
Since $\overline{K}_{g-1,1} \cap \Span{T_\beta} = \Span{T_{\beta}^L}$,
we can restrict this exact sequence to $\overline{K}_{g-1,1}$ and get a split
exact sequence
$$1 \longrightarrow \Z \longrightarrow \overline{K}_{g-1,1} \longrightarrow K_{g-1,1} \longrightarrow 1,$$
where $\Z$ is generated by $T_{\beta}^L$ and $K_{g-1,1} = \Image(\overline{K}_{g-1,1} \rightarrow \pi_1(\Sigma_{g-1,1}))$.
Our goal is to show that $K_{g-1,1} = \Ker(\pi_1(\Sigma_{g-1,1}) \rightarrow \HH_1(\Sigma_{g-1,1};\Z/L))$.  The proof
of this is similar to the proof of \cite[Theorem 4.1]{PutmanCutPaste}, and is thus omitted.
\end{proof}

\section{The first homology groups}
\label{section:firsthomology}
In this section, we generalize work of Hain \cite{HainTorelli} to calculate the first homology
groups of $\Mod_{g,b}^n(L)$ and related groups with various systems of coefficients.

\subsection{Hain's results}

We will need the following two special cases of a theorem of Hain concerning the first homology groups of $\Mod_{g,b}^n(L)$.
To simplify our notation, we will denote $\Mod_{g,b}^n$ by $\Mod_{g,b}^n(1)$. 

\begin{theorem}[{Hain, \cite[Prop.\ 5.2]{HainTorelli}}]
\label{theorem:hainh1}
For $L \geq 1$, $g \geq 3$ and $b,n \geq 0$, we have $\HH_1(\Mod_{g,b}^n(L);\Q) = 0$.
\end{theorem}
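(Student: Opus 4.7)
The plan is to apply the five-term exact sequence in rational homology to the short exact sequence
$$1 \longrightarrow \Torelli_{g,b}^n \longrightarrow \Mod_{g,b}^n(L) \longrightarrow \Sp_{2g}(\Z,L) \longrightarrow 1,$$
adopting the convention that $\Sp_{2g}(\Z,1) := \Sp_{2g}(\Z)$. The relevant tail reads
$$\bigl(\HH_1(\Torelli_{g,b}^n;\Q)\bigr)_{\Sp_{2g}(\Z,L)} \longrightarrow \HH_1(\Mod_{g,b}^n(L);\Q) \longrightarrow \HH_1(\Sp_{2g}(\Z,L);\Q) \longrightarrow 0,$$
so it suffices to show that both outer terms vanish.

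The right-hand term is soft. Since $g \geq 3$, the Lie group $\Sp_{2g}(\R)$ has real rank $g \geq 2$, so by Kazhdan's theorem every lattice in $\Sp_{2g}(\R)$ has property~(T); in particular $\Sp_{2g}(\Z,L)$ has finite abelianization and therefore trivial first rational homology. An equivalent route is to quote Borel's vanishing theorem directly.

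For the left-hand term, the strategy is to show that, as a $\Sp_{2g}(\Q)$-module, $\HH_1(\Torelli_{g,b}^n;\Q)$ is a direct sum of nontrivial irreducible algebraic representations. Once this is known, the arithmetic lattice $\Sp_{2g}(\Z,L)$ is Zariski-dense in $\Sp_{2g}$ by Borel density, and the coinvariants of any nontrivial irreducible algebraic $\Sp_{2g}$-representation under a Zariski-dense subgroup vanish. The closed case is Johnson's theorem: $\HH_1(\Torelli_g;\Q) \cong \wedge^3 H_\Q / H_\Q$ with $H_\Q = \HH_1(\Sigma_g;\Q)$, which is a single nontrivial irreducible. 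The general case follows by induction on $b+n$ using the Birman exact sequences of \S\ref{section:birmanexactsequence} (and their Torelli-level restrictions) to peel off one puncture or boundary component at a time; rational semisimplicity of algebraic $\Sp_{2g}(\Q)$-representations lets one split off the newly contributed summands, which come from point-pushing classes in $\HH_1(\pi_1(\Sigma_{g,b}^n);\Q)$ and are built from tensor powers of $H_\Q$.

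The main technical obstacle is ensuring that no trivial $\Sp_{2g}(\Q)$-summand ever slips into $\HH_1(\Torelli_{g,b}^n;\Q)$ along this inductive reduction. The danger is the central Dehn twist $T_\beta$ about a capped-off boundary component: it maps into the Torelli abelianization and is fixed by the full symplectic action, so a priori it spans a trivial $\Sp$-line. However, comparison with Johnson's base-case computation $\HH_1(\Torelli_{g,1};\Q) \cong \wedge^3 H_\Q$, which contains no trivial summand, forces the rational class of any such twist to be zero, and propagating this observation through the inductive Birman tower is essentially the only serious verification required.
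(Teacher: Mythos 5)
The paper does not prove this statement; it simply cites Hain \cite[Prop.\ 5.2]{HainTorelli} as a black box, so there is no internal argument to compare against. That said, your scaffolding --- the five-term sequence for $1 \to \Torelli_{g,b}^n \to \Mod_{g,b}^n(L) \to \Sp_{2g}(\Z,L) \to 1$, property~(T) to kill $\HH_1(\Sp_{2g}(\Z,L);\Q)$, Johnson's computation of the Torelli abelianization, and Borel density to kill the coinvariants --- is the framework of Hain's proof, and the closed and once-bordered cases go through exactly as you say.

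The gap is in the inductive reduction to those base cases. Passing from $\Sigma_{g,b}^n$ to $\Sigma_{g,b+1}^n$ via the Birman sequence $1 \to \pi_1(U\Sigma_{g,b}^n) \to \Torelli_{g,b+1}^n \to \Torelli_{g,b}^n \to 1$, the new contribution to $\HH_1(\Torelli_{g,b+1}^n;\Q)$ is a quotient of $\HH_1(U\Sigma_{g,b}^n;\Q) \cong \Q \oplus \HH_1(\Sigma_{g,b}^n;\Q)$. You single out the fiber class (the boundary twist $T_\beta$) as the sole danger and assert that the remaining new summands are ``built from tensor powers of $H_\Q$,'' but this is not so: for $b+n\geq 1$ the module $\HH_1(\Sigma_{g,b}^n;\Q)$ contains a $(b+n-1)$-dimensional subspace spanned by the loops around the other boundary components and punctures, on which $\Sp_{2g}$ acts trivially. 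The point-pushes along those loops are products of separating twists about curves bounding low-genus pieces, and one must separately verify that these classes are rationally null-homologous in the Torelli group --- true for $g\geq 3$, but a genuine verification of the same character as the $T_\beta$ one, not a consequence of semisimplicity. Relatedly, the statement that the coinvariants $(\HH_1(U\Sigma_{g,b}^n;\Q))_{\Torelli_{g,b}^n}$ carry an algebraic $\Sp_{2g}(\Q)$-action, which you need before Borel density applies at the inductive step, is something to be established rather than assumed.
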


\begin{theorem}[{Hain, \cite[Prop.\ 5.2]{HainTorelli}}]
\label{theorem:hainh1h}
For $L \geq 1$, $g \geq 3$ and $b,n \geq 0$, we have 
$$\HH_1(\Mod_{g,b}^n(L);\HH_1(\Sigma_g;\Q)) \cong \Q^{b+n}.$$
\end{theorem}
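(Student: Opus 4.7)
I would proceed by induction on $b + n$, with base case $b = n = 0$. The base case is the closed-surface assertion $\HH_1(\Mod_g(L); H) = 0$, where $H := \HH_1(\Sigma_g; \Q)$. I would obtain this by applying the Hochschild--Serre five-term exact sequence to
$$1 \longrightarrow \Torelli_g \longrightarrow \Mod_g(L) \longrightarrow \Sp_{2g}(\Z, L) \longrightarrow 1$$
with coefficients in $H$. Johnson's theorem gives $\HH_1(\Torelli_g; \Q) \cong \wedge^3 H / H$ for $g \geq 3$, so $\HH_1(\Torelli_g; H) \cong (\wedge^3 H / H) \otimes H$ as an $\Sp_{2g}$-module. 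This tensor product contains no trivial summand, since $H$ is not a summand of $\wedge^3 H / H$ by construction, and Borel density of $\Sp_{2g}(\Z, L)$ in $\Sp_{2g}$ then forces its $\Sp_{2g}(\Z, L)$-coinvariants to vanish. The other end term $\HH_1(\Sp_{2g}(\Z, L); H)$ vanishes by Borel's stability theorem applied to the standard representation, so the five-term sequence closes out the base case.

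For the inductive step, to pass from $(g, b, n)$ to $(g, b+1, n)$ I would apply the Birman exact sequence (Corollary \ref{corollary:birmanexactsequencel1})
$$1 \longrightarrow K \longrightarrow \Mod_{g, b+1}^n(L) \longrightarrow \Mod_{g, b}^n(L) \longrightarrow 1, \qquad K := \pi_1(U\Sigma_{g, b}^n),$$
and handle the puncture case $(g, b, n) \to (g, b, n+1)$ in parallel using the analogous Birman sequence with $K := \pi_1(\Sigma_{g, b}^n)$. In both cases $K \subset \Torelli_{g,b+1}^n$, so $K$ acts trivially on $H$ and $\HH_1(K; H) = \HH_1(K; \Q) \otimes H$. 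A Serre spectral sequence analysis of the circle bundle $U\Sigma_{g, b}^n \to \Sigma_{g, b}^n$ shows that $\HH_1(U\Sigma_{g, b}^n; \Q)$ splits rationally as a $\Mod_{g, b}^n(L)$-module into a copy of $H$ (the image in $\HH_1(\Sigma_g; \Q)$) and trivial summands from the fiber class and peripheral loops (the latter fixed because mapping classes permute neither boundary components nor punctures). Taking $Q := \Mod_{g, b}^n(L)$-coinvariants: the trivial summands tensored with $H$ contribute $H_Q = 0$ by Borel density, while $(H \otimes H)_Q \cong \Q$ via the unique $\Sp_{2g}$-invariant symplectic pairing. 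Thus $\HH_1(K; H)_Q \cong \Q$, and combined with the inductive hypothesis $\HH_1(Q; H) \cong \Q^{b+n}$ the five-term exact sequence yields $\HH_1(\Mod_{g, b+1}^n(L); H) \cong \Q^{b+n+1}$, provided $\HH_1(K; H)_Q$ injects into $\HH_1(\Mod_{g, b+1}^n(L); H)$.

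This injectivity, equivalent to surjectivity of $\HH_2(\Mod_{g, b+1}^n(L); H) \to \HH_2(\Mod_{g, b}^n(L); H)$, is the expected main obstacle. When the Birman sequence splits (Corollary \ref{corollary:birmanexactsequencel1} gives this for boundary attachment with $b \geq 1$, and the analogous puncture sequence splits in parallel cases), the splitting produces a section on homology and injectivity is automatic. The only remaining case is attaching the first boundary or puncture to a closed surface, where no splitting exists; there I would close the argument by naturality, comparing our Hochschild--Serre spectral sequence against the corresponding one for the full mapping class group extension $1 \to K \to \Mod_{g, b+1}^n \to \Mod_{g, b}^n \to 1$, where Harer's results on $\HH_2(\Mod)$ pin down the relevant $d_2$ differential and rule out unwanted cancellation.
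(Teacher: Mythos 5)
The paper does not prove this theorem: it cites \cite[Prop.\ 5.2]{HainTorelli} and, in the remark immediately following the statement, observes that Hain's cohomological version dualizes to the homological one via Theorem~\ref{theorem:duality} and the self-duality of $\HH_1(\Sigma_g;\Q)$ coming from the intersection form. So your attempt at a self-contained proof by induction on $b+n$ is a genuinely different route; it is in fact closer in spirit to Hain's original argument than to anything in this paper.

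The base case and the split inductive step are essentially sound. For $b=n=0$, the five-term sequence of $1\to\Torelli_g\to\Mod_g(L)\to\Sp_{2g}(\Z,L)\to 1$ with $H$-coefficients, together with Johnson's identification $\HH_1(\Torelli_g;\Q)\cong\wedge^3 H/H$ and the fact that $(\wedge^3 H/H)\otimes H$ has no trivial $\Sp_{2g}$-summand, gives $(\HH_1(\Torelli_g;H))_{\Sp_{2g}(\Z,L)}=0$ by Zariski density of $\Sp_{2g}(\Z,L)$; you should, however, cite a precise source for $\HH_1(\Sp_{2g}(\Z,L);H)=0$ (this needs Borel or Raghunathan/Margulis vanishing for nontrivial algebraic coefficients, with a stable range covering $g\geq 3$). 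In the split inductive step, the claimed $\Mod_{g,b}^n(L)$-equivariant splitting of $\HH_1(U\Sigma_{g,b}^n;\Q)$ is not obviously available and you don't need it: the short exact sequence $0\to(\text{peripheral + fiber})\to\HH_1(U\Sigma_{g,b}^n;\Q)\to H\to 0$ plus right-exactness of coinvariants and $H_Q=0$ already gives $(\HH_1(K;\Q)\otimes H)_Q\cong(H\otimes H)_Q\cong\Q$.

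The genuine gap is the non-split case, which is exactly where the content lies, and your sketch does not actually pin down the $d_2$ differential. Naturality of the Hochschild--Serre spectral sequence under $\Mod_{g,1}^n(L)\hookrightarrow\Mod_{g,1}^n$ gives a commuting square on $E^2_{2,0}\to E^2_{0,1}$, and to conclude $d_2=0$ on the $\Mod(L)$-side from $d_2=0$ on the $\Mod$-side you must check that the induced map $\HH_1(K;H)_{\Mod_g^n(L)}\to\HH_1(K;H)_{\Mod_g^n}$ between $E^2_{0,1}$ terms is injective (both are $\Q$, and the map is a surjection of coinvariants, so it is an isomorphism — but this needs to be said). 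More importantly, showing $d_2=0$ for the full mapping class group requires knowing $\HH_1(\Mod_{g,1};H)\cong\Q$ and $\HH_1(\Mod_g;H)=0$, i.e.\ the $L=1$ case of the theorem you are proving. This is not circular because these are Harer/Morita computations that predate Hain's result, but it is not ``Harer's results on $\HH_2(\Mod)$'' as you wrote — you need the twisted \emph{first} (co)homology of $\Mod_{g,1}$ and $\Mod_g$, and you should name that input explicitly rather than gesture at it.
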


\begin{remark}
Hain proves Theorem \ref{theorem:hainh1h} for cohomology, but the indicated theorems for homology follow
from Theorem \ref{theorem:duality} and the fact that we have an isomorphism 
$\Hom(\HH_1(\Sigma_{g};\Q),\Q) \cong \HH_1(\Sigma_{g};\Q)$ of $\Mod_{g,b}^n$-modules arising
from the algebraic intersection form.
\end{remark}

\subsection{Unit tangent bundle coefficients}

To goal of this section is Lemma \ref{lemma:h1modluh} below, which says that
$\HH_1(\Mod_{g,b}^n(L);\HH_1(U\Sigma_{g,b}^n;\Q))=0$.  We will need three preliminary lemmas.

\begin{lemma}
\label{lemma:h1uhclosed}
For $g \geq 2$, we have $\HH_1(U\Sigma_{g};\Q) \cong \HH_1(\Sigma_g;\Q)$ and $\HH^1(U\Sigma_g;\Q) \cong \HH^1(\Sigma_g;\Q)$.
\end{lemma}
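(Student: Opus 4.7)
The plan is to exploit the fact that for $g \geq 2$, the surface $\Sigma_g$ is aspherical and its unit tangent bundle $U\Sigma_g$ is an aspherical Seifert fibered $3$-manifold, so the topological homology of $U\Sigma_g$ agrees with the group homology of $\pi_1(U\Sigma_g)$. The circle bundle $S^1 \to U\Sigma_g \to \Sigma_g$ then corresponds on fundamental groups to a central extension
$$1 \longrightarrow \Z \longrightarrow \pi_1(U\Sigma_g) \longrightarrow \pi_1(\Sigma_g) \longrightarrow 1,$$
where the central $\Z$ is generated by the class of the fiber loop. I will apply the Gysin sequence (exactly the one stated in the preliminaries of this paper) to this central extension.

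In the homological Gysin sequence with $\Q$ coefficients, the relevant segment around degree $1$ reads
$$\HH_2(\Sigma_g;\Q) \xrightarrow{\ \cap e\ } \HH_0(\Sigma_g;\Q) \longrightarrow \HH_1(U\Sigma_g;\Q) \longrightarrow \HH_1(\Sigma_g;\Q) \longrightarrow \HH_{-1}(\Sigma_g;\Q),$$
and the corresponding segment in cohomology is
$$\HH^{-1}(\Sigma_g;\Q) \to \HH^1(\Sigma_g;\Q) \to \HH^1(U\Sigma_g;\Q) \to \HH^0(\Sigma_g;\Q) \xrightarrow{\ \cup e\ } \HH^2(\Sigma_g;\Q).$$
The key input is that the Euler class of the unit tangent bundle equals the Euler characteristic $2-2g$ of $\Sigma_g$ (under the identification $\HH^2(\Sigma_g;\Z) \cong \Z$ given by the fundamental class), which is nonzero since $g \geq 2$. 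Therefore the cap (resp.\ cup) product with $e$ appearing at the ends of these sequences is an isomorphism between two one-dimensional $\Q$-vector spaces.

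Combining this with the vanishing of $\HH^{-1}(\Sigma_g;\Q)$ and $\HH_{-1}(\Sigma_g;\Q)$, exactness immediately forces
$$\HH_1(U\Sigma_g;\Q) \cong \HH_1(\Sigma_g;\Q) \quad \text{and} \quad \HH^1(U\Sigma_g;\Q) \cong \HH^1(\Sigma_g;\Q),$$
as desired. There is no serious obstacle here: the only subtlety is confirming that the Euler class of the circle bundle $U\Sigma_g \to \Sigma_g$ really is $\chi(\Sigma_g)$, which is a standard fact (it can be read off, e.g., from Poincaré--Hopf, or from the fact that the tangent bundle of $\Sigma_g$ has Euler number $\chi(\Sigma_g)$).
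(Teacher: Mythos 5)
Your proof is correct, but it takes a different route from the paper's.  The paper simply writes down the standard presentation
$$\pi_1(U\Sigma_g) \cong \langle a_1, b_1, \ldots, a_g, b_g, t \mid [a_1,b_1] \cdots [a_g,b_g] = t^{2-2g} \rangle$$
and reads off the abelianization: the single relation becomes $(2-2g)\,t = 0$, so rationally the fiber class $t$ dies and $\HH_1(U\Sigma_g;\Q) \cong \Q^{2g} \cong \HH_1(\Sigma_g;\Q)$, with the cohomological statement following by duality.  Your argument instead runs the Gysin sequence for the circle bundle $S^1 \to U\Sigma_g \to \Sigma_g$ and uses that the Euler number $2-2g$ is nonzero for $g \geq 2$, so $\cup\, e$ and $\cap\, e$ are isomorphisms in the relevant one-dimensional spots, killing the fiber contribution.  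These are really the same computation viewed at two levels: the coefficient $2-2g$ in the group relation \emph{is} the Euler class, and your Gysin argument is what the paper's one-line observation compiles down to once you unwind the spectral sequence for the central extension $1 \to \Z \to \pi_1(U\Sigma_g) \to \pi_1(\Sigma_g) \to 1$.  The paper's proof is more elementary and self-contained; yours is more conceptual and would generalize more cleanly to other circle bundles with nonzero Euler class, but it does require the (standard) identification of the Euler class of $U\Sigma_g$ with $\chi(\Sigma_g)$, which you correctly flag.  Both are fine.
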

\begin{proof}
An immediate consequence of the standard group presentation
\begin{equation*}
\pi_1(U\Sigma_g) \cong \langle \text{$a_1, b_1, \ldots, a_g, b_g, t$ $|$ $[a_1,b_1] \cdots [a_g,b_g] = t^{2-2g}$} \rangle. \qedhere
\end{equation*}
\end{proof}

\begin{lemma}
\label{lemma:huhcoinv}
For $L \geq 1$, $g \geq 2$, and $b,n \geq 0$, we have
$$(\HH_1(U\Sigma_{g,b}^n;\Q))_{\Mod_{g,b}^n(L)} = (\HH_1(\Sigma_{g,b}^n;\Q))_{\Mod_{g,b}^n(L)} = 0.$$
\end{lemma}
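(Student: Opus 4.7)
The plan is to reduce both assertions to a single fact about the representation of $\Mod_{g,b}^n(L)$ on $\HH_1(\Sigma_g; \Q)$, namely that it has no coinvariants; this will come from Zariski density of $\Sp_{2g}(\Z, L)$ in $\Sp_{2g}(\Q)$ together with irreducibility of the standard rep.

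First I would handle the closed case $b = n = 0$. Lemma \ref{lemma:h1uhclosed} gives a $\Mod_g$-equivariant isomorphism $\HH_1(U\Sigma_g; \Q) \cong \HH_1(\Sigma_g; \Q)$, so the two coinvariants coincide. The action of $\Mod_g(L)$ on $\HH_1(\Sigma_g; \Q) \cong \Q^{2g}$ factors through $\Sp_{2g}(\Z, L)$, a Zariski-dense subgroup of the algebraic group $\Sp_{2g}$. The standard rational representation of $\Sp_{2g}$ is irreducible with no nonzero invariants, so the same holds for its restriction, and also for the dual (which is isomorphic to the original via the symplectic form). Theorem \ref{theorem:duality} then yields $(\HH_1(\Sigma_g; \Q))_{\Mod_g(L)} = 0$.

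For general $b$ and $n$, capping off boundary components and filling in punctures gives a $\Mod_{g,b}^n$-equivariant short exact sequence $0 \to K \to \HH_1(\Sigma_{g,b}^n; \Q) \to \HH_1(\Sigma_g; \Q) \to 0$ with $K$ spanned by boundary loops $[\partial_l]$ and puncture loops $[\delta_k]$ (modulo the single relation that they sum to zero). The closed case kills the quotient in coinvariants, so it suffices to kill $K$ in the coinvariants of the middle term. Although each kernel class is individually fixed by $\Mod_{g,b}^n(L)$, it can nevertheless be realized as $f \cdot [\gamma] - [\gamma]$ for suitable $f$ and $\gamma \notin K$. The tool is the Birman exact sequence: for each puncture $p_k$, the puncture-forgetting version gives $\pi_1(\Sigma_{g,b}^{n-1}) \hookrightarrow \Mod_{g,b}^n$ whose image lies in $\Torelli_{g,b}^n \subseteq \Mod_{g,b}^n(L)$, and the point-push $P_\alpha$ acts on homology by $P_\alpha \cdot [\gamma] = [\gamma] + i(\alpha, \gamma)[\delta_k]$; picking $\gamma$ with $i(\alpha, \gamma) = \pm 1$ exhibits $[\delta_k]$ as a relation, so $[\delta_k] = 0$ in coinvariants. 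The boundary version (Theorem \ref{theorem:birmanexactsequence}) provides an analogous $\pi_1(U\Sigma)$-valued map landing in $\Torelli_{g,b}^n$ whose point-pushes realize each $[\partial_l]$ in the same way, handling the $\HH_1(\Sigma_{g,b}^n;\Q)$ statement.

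For $\HH_1(U\Sigma_{g,b}^n; \Q)$ in the non-closed case, there is an additional trivial summand $\Q[\ell]$ coming from the fiber class of the $S^1$-bundle $U\Sigma \to \Sigma$, which must also be killed in coinvariants. Here the full $\pi_1(U\Sigma_{g,b-1}^n)$-valued map of Theorem \ref{theorem:birmanexactsequence} comes into play: the fiber generator $t \in \pi_1(U\Sigma)$ corresponds to a boundary Dehn twist, and its image acts on $\HH_1(U\Sigma_{g,b}^n)$ by shifting canonical tangent-vector-field lifts of suitable curves by $[\ell]$, producing $[\ell]$ as a relation. The main obstacle is this last computation: verifying the precise action of framed point-pushing on $\HH_1$ of the unit tangent bundle so that $[\ell]$ is explicitly expressed as $f \cdot x - x$ for $f \in \Mod_{g,b}^n(L)$. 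This requires careful bookkeeping of how the framing direction interacts with curves on the surface, and is presumably why the subsurface embedding $\Sigma_{2,1} \hookrightarrow \Sigma_{2,2}^3$ from Figure \ref{figure:birman}.b is invoked in the proof: it localizes the computation within a single genus-$g$-with-one-boundary piece where the closed-case argument can be applied internally, and the remaining kernel and fiber classes are killed one at a time by Birman-type maps supported in the complement.
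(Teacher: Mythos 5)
Your closed-case argument is correct but substantially more elaborate than the paper's: rather than invoking Zariski density of $\Sp_{2g}(\Z,L)$ and irreducibility of the standard representation, the paper simply observes that $\HH_1(\Sigma_{g,b}^n;\Q)$ is generated by classes of nonseparating simple closed curves, and for any such $a$ and a curve $b$ meeting it once, $T_a^L[b] = [b] \pm L[a]$ already kills $[a]$ in coinvariants. This single observation disposes of $(\HH_1(\Sigma_{g,b}^n;\Q))_{\Mod_{g,b}^n(L)} = 0$ for all $b,n$ at once, without splitting into kernel and quotient and without Birman exact sequences or point-pushing. Your decomposition and point-push argument for the kernel classes is not wrong, but it is doing extra work.

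The genuine gap is exactly where you flag it. You write that the fiber generator corresponds to a boundary Dehn twist that ``shifts canonical tangent-vector-field lifts of suitable curves by $[\ell]$,'' and then acknowledge that verifying this is ``the main obstacle'' and that you are unsure of the precise mechanism. This is not a point one can leave unverified: \emph{a priori} there is no reason a Torelli-group element should shift a lifted class by a nonzero multiple of the fiber class, and the statement is false for, e.g., the central $\Z$ itself, which acts trivially on $\HH_1(U\Sigma)$ by centrality. The paper closes this gap by citing Trapp's computation (\cite[Proposition 2.8]{TrappLinear}) of the $\Torelli_{g,1}$-action on $\HH_1(U\Sigma_{g,1};\Q)$: in a suitable basis $\{z,\tilde a_1,\tilde b_1,\ldots,\tilde a_g,\tilde b_g\}$ with $z$ the fiber class, the image of $\Torelli_{g,1}$ consists of all matrices $\MatTwoTwo{1}{2v}{0}{\One}$ with $v$ an arbitrary integer row vector. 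In particular some Torelli element sends $\tilde a_1 \mapsto \tilde a_1 + 2z$, so $2z = 0$ in $\Torelli_{g,1}$-coinvariants. Your guess about the role of the $\Sigma_{g,1} \hookrightarrow \Sigma_{g,b}^n$ embedding is correct --- it reduces the claim to the $(g,1)$ case --- but without Trapp's result (or an equivalent winding-number computation) the reduction has nothing to land on. You should either reproduce that computation or cite it; as written, the proof of the $U\Sigma$ half of the lemma is incomplete.
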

\begin{proof}
We first prove that $(\HH_1(\Sigma_{g,b}^n;\Q))_{\Mod_{g,b}^n(L)} = 0$.
The group $\HH_1(\Sigma_{g,b}^n;\Q)$ is generated by the homology classes of oriented simple
closed nonseparating curves (this is true even if $b$ or $n$ are nonzero!).  Let $a$ be such a curve, and
let $b$ an oriented simple closed curve that intersects $a$ once.  We then have $T_{a}^L([b]) = [b] \pm L [a]$,
so in $(\HH_1(\Sigma_{g,b}^n;\Q))_{\Mod_{g,b}^n(L)}$ we have $[b]$ equal to $[b] \pm L [a]$; i.e.\ $L [a] = 0$, as
desired.

We now prove that $(\HH_1(U\Sigma_{g,b}^n;\Q))_{\Mod_{g,b}^n(L)} = (\HH_1(\Sigma_{g,b}^n;\Q))_{\Mod_{g,b}^n(L)}$.  For
$b=n=0$, this follows from Lemma \ref{lemma:h1uhclosed}.  Otherwise, $U\Sigma_{g,b}^n$ is a trivial $S^1$-bundle, so we
have a short exact sequence
$$0 \longrightarrow \Q \longrightarrow \HH_1(U\Sigma_{g,b}^n;\Q) \longrightarrow \HH_1(\Sigma_{g,b}^n;\Q) \longrightarrow 0.$$
The kernel $\Q$ is generated by the homology class of the fiber.  It is enough to show that the kernel $\Q$ of this 
exact sequence is killed when we pass to the coinvariants of $\Torelli_{g,b}^n < \Mod_{g,b}^n(L)$ 
acting on $\HH_1(U\Sigma_{g,b}^n;\Q)$.
Observe (see Figure \ref{figure:birman}.b) 
that there is an embedding $\Sigma_{g,1} \hookrightarrow \Sigma_{g,b}^n$ 
that induces a commutative diagram of short exact sequences
$$\begin{CD}
0 @>>> \Q          @>>> \HH_1(U\Sigma_{g,1};\Q)   @>>> \HH_1(\Sigma_{g,1};\Q)   @>>> 0 \\
@.     @VV{\cong}V      @VVV                          @VVV                          @.\\
0 @>>> \Q          @>>> \HH_1(U\Sigma_{g,b}^n;\Q) @>>> \HH_1(\Sigma_{g,b}^n;\Q) @>>> 0
\end{CD}$$
It follows that it is enough to prove that $(\HH_1(U\Sigma_{g,1};\Q))_{\Torelli_{g,1}} \cong \HH_1(\Sigma_{g,1};\Q)$.

Trapp \cite{TrappLinear} investigated the action of $\Torelli_{g,1}$ on $\HH_1(U\Sigma_{g,1};\Q)$.  
In \cite[Proposition 2.8]{TrappLinear}, it is shown that there is a basis 
$\{z,\tilde{a}_1,\tilde{b}_1,\ldots,\tilde{a}_g,\tilde{b}_g\}$ for
$\HH_1(U\Sigma_{g,1};\Q)$ with the following three properties.  First, $z$ is the homology class of the fiber.  Second,
$\{\tilde{a}_1,\tilde{b}_1,\ldots,\tilde{a}_g,\tilde{b}_g\}$ projects to a symplectic basis for $\HH_1(\Sigma_{g,1};\Q)$.
Third, with respect to this basis, the image of $\Torelli_{g,1}$ in the automorphism group 
of $\HH_1(U\Sigma_{g,1};\Q) \cong \Q^{2g+1}$ consists of all matrices of the form
$\MatTwoTwo{1}{2v}{0}{\One}$.  Here $v$ is an arbitrary $2g$-dimensional row vector whose entries are integers and $\One$ is
the $2g \times 2g$-dimensional identity matrix.  In particular, some element of $\Torelli_{g,1}$ takes $\tilde{a}_1$
to $\tilde{a}_1 + 2z$.  We conclude that in $(\HH_1(U\Sigma_{g,1};\Q))_{\Torelli_{g,1}}$ these two elements
are equal, i.e.\ that $2z = 0$, as desired.
\end{proof}

\begin{lemma}
\label{lemma:h1modlh}
Let $L \geq 1$, $g \geq 3$, and $b,n \geq 0$.  Assume that $(b,n) \neq (0,0)$.
Then 
$$\HH_1(\Mod_{g,b}^n(L);\HH_1(\Sigma_{g,b}^n;\Q)) \cong \Q.$$
\end{lemma}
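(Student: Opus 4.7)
The plan is to deduce the lemma from Hain's Theorems~\ref{theorem:hainh1} and~\ref{theorem:hainh1h} together with Lemma~\ref{lemma:huhcoinv}, by means of the long exact sequence in group homology associated to a short exact sequence of coefficients. Capping off all $b$ boundary components and filling in all $n$ punctures gives a short exact sequence of $\Mod_{g,b}^n(L)$-modules
$$0 \longrightarrow V \longrightarrow \HH_1(\Sigma_{g,b}^n;\Q) \longrightarrow \HH_1(\Sigma_g;\Q) \longrightarrow 0,$$
whose kernel $V$ is the subspace spanned by the $b$ boundary classes and the $n$ puncture-loop classes, subject to the single relation that their sum vanishes in $\HH_1(\Sigma_{g,b}^n;\Q)$; hence $\Dim_\Q V = b+n-1$. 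Since elements of $\Mod_{g,b}^n(L)$ fix each boundary component pointwise and do not permute punctures, $V$ is a \emph{trivial} $\Mod_{g,b}^n(L)$-module.

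Write $G := \Mod_{g,b}^n(L)$, $B := \HH_1(\Sigma_{g,b}^n;\Q)$, and $C := \HH_1(\Sigma_g;\Q)$. The associated long exact sequence reads
$$\HH_1(G;V) \to \HH_1(G;B) \to \HH_1(G;C) \to V_G \to \HH_0(G;B) \to \HH_0(G;C) \to 0.$$
Theorem~\ref{theorem:hainh1} gives $\HH_1(G;\Q)=0$, and triviality of $V$ forces $\HH_1(G;V) = V\otimes_\Q\HH_1(G;\Q) = 0$. Lemma~\ref{lemma:huhcoinv} gives $\HH_0(G;B) = 0$, and the surjection $B \twoheadrightarrow C$ then forces $\HH_0(G;C) = 0$. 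Theorem~\ref{theorem:hainh1h} gives $\HH_1(G;C) \cong \Q^{b+n}$, and triviality of $V$ gives $V_G \cong V \cong \Q^{b+n-1}$. Substituting, the sequence collapses to
$$0 \longrightarrow \HH_1(G;B) \longrightarrow \Q^{b+n} \longrightarrow \Q^{b+n-1} \longrightarrow 0,$$
from which $\HH_1(G;B) \cong \Q$ follows by a dimension count.

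There is no serious obstacle in this argument: once the short exact sequence of coefficients is set up and $V$ is recognized as a trivial module of dimension $b+n-1$, the conclusion is forced. The one fact meriting a sanity check is the surjectivity of the connecting homomorphism $\HH_1(G;C) \to V_G$, but this is automatic from $\HH_0(G;B) = 0$ by exactness; no Birman-style analysis of point-pushing or explicit bar-complex calculation is required. The proof is thus essentially a textbook application of the long exact sequence in group homology with twisted coefficients, where all the input comes from Hain's earlier work and the preceding coinvariants lemma.
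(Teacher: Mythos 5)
Your proof is correct and is essentially identical to the paper's: both set up the same short exact sequence of coefficient modules with kernel the trivial module $\Q^{b+n-1}$ spanned by boundary/puncture loops, apply the long exact sequence in $\Mod_{g,b}^n(L)$-homology, and plug in Theorem~\ref{theorem:hainh1}, Theorem~\ref{theorem:hainh1h}, and Lemma~\ref{lemma:huhcoinv} to extract $0 \to \HH_1(G;B) \to \Q^{b+n} \to \Q^{b+n-1} \to 0$. The only cosmetic difference is that you also observe $\HH_0(G;C)=0$, which is true but not needed once $\HH_0(G;B)=0$ is known.
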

\begin{proof}
We have a short exact sequence
$$0 \longrightarrow \Q^{b+n-1} \longrightarrow \HH_1(\Sigma_{g,b}^n;\Q) \longrightarrow \HH_1(\Sigma_g;\Q) \longrightarrow 0$$
of $\Mod_{g,b}^n$-modules.  Here the action of $\Mod_{g,b}^n(L)$ on $\Q^{b+n-1}$ (generated by the loops around the boundary
components/punctures) is trivial.  Associated to this is a long exact sequences in $\Mod_{g,b}^n(L)$ homology.
Theorem \ref{theorem:hainh1} says that $\HH_1(\Mod_{g,b}^n(L);\Q^{b+n-1}) = 0$ and 
Lemma \ref{lemma:huhcoinv} says that 
$$\HH_0(\Mod_{g,b}^n(L);\HH_1(\Sigma_{g,b}^n;\Q)) \cong (\HH_1(\Sigma_{g,b}^n;\Q))_{\Mod_{g,b}^n(L)} = 0.$$
This long exact sequence thus contains the segment
$$0 \longrightarrow \HH_1(\Mod_{g,b}^n(L);\HH_1(\Sigma_{g,b}^n;\Q)) \longrightarrow \HH_1(\Mod_{g,b}^n(L);\HH_1(\Sigma_g;\Q)) \longrightarrow \Q^{b+n-1} \longrightarrow 0.$$
Theorem \ref{theorem:hainh1h} says that $\HH_1(\Mod_{g,b}^n(L);\HH_1(\Sigma_g;\Q)) \cong \Q^{b+n}$, and the lemma follows.
\end{proof}

\begin{lemma}
\label{lemma:h1modluh}
Let $L \geq 1$, $g \geq 3$, and $b,n \geq 0$.  Then 
$\HH_1(\Mod_{g,b}^n(L);\HH_1(U\Sigma_{g,b}^n;\Q)) = 0$.
\end{lemma}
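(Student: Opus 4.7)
The plan is to split into two cases based on whether $(b,n)=(0,0)$.

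In the closed case $b=n=0$, the bundle projection $U\Sigma_g\to\Sigma_g$ induces a $\Mod_g$-equivariant map $\HH_1(U\Sigma_g;\Q)\to\HH_1(\Sigma_g;\Q)$ whose kernel is rationally trivial: by Lemma \ref{lemma:h1uhclosed} both sides have the same dimension, so this map is an isomorphism of $\Mod_g$-modules. Theorem \ref{theorem:hainh1h} then gives $\HH_1(\Mod_g(L);\HH_1(U\Sigma_g;\Q))\cong\HH_1(\Mod_g(L);\HH_1(\Sigma_g;\Q))\cong\Q^{0}=0$, as desired.

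In the non-closed case $(b,n)\neq(0,0)$, the surface $\Sigma_{g,b}^n$ deformation retracts onto a one-complex, so its tangent bundle is trivial and $U\Sigma_{g,b}^n\cong\Sigma_{g,b}^n\times S^1$. This yields a short exact sequence of $\Mod_{g,b}^n$-modules
$$0\longrightarrow\Q\longrightarrow\HH_1(U\Sigma_{g,b}^n;\Q)\longrightarrow\HH_1(\Sigma_{g,b}^n;\Q)\longrightarrow 0,$$
in which $\Q$ is generated by the fiber class; since elements of $\Mod_{g,b}^n$ are orientation-preserving, they act trivially on this oriented fiber class.

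Applying $\Mod_{g,b}^n(L)$-homology produces a long exact sequence; Theorem \ref{theorem:hainh1} kills $\HH_1(\Mod_{g,b}^n(L);\Q)$, Lemma \ref{lemma:h1modlh} identifies $\HH_1(\Mod_{g,b}^n(L);\HH_1(\Sigma_{g,b}^n;\Q))\cong\Q$, and Lemma \ref{lemma:huhcoinv} kills $\HH_0(\Mod_{g,b}^n(L);\HH_1(U\Sigma_{g,b}^n;\Q))$. The relevant segment collapses to
$$0\longrightarrow\HH_1(\Mod_{g,b}^n(L);\HH_1(U\Sigma_{g,b}^n;\Q))\longrightarrow\Q\longrightarrow\Q\longrightarrow 0,$$
and exactness forces the leftmost term to vanish.

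The argument is essentially a diagram chase once the input lemmas are in place; the only real verification needed is the equivariance of the above short exact sequence, i.e.\ confirming the trivial action on the fiber class. I expect no serious obstacle: the delicate homological work has been done in Lemmas \ref{lemma:huhcoinv} and \ref{lemma:h1modlh}, so this final statement is just the assembly.
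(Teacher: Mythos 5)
Your proof is correct and follows essentially the same route as the paper: split into the closed case (where Lemma \ref{lemma:h1uhclosed} and Theorem \ref{theorem:hainh1h} immediately give the result) and the non-closed case (where the trivialized unit tangent bundle yields a short exact sequence of coefficient modules, and the resulting long exact sequence in $\Mod_{g,b}^n(L)$-homology, with input from Theorem \ref{theorem:hainh1}, Lemma \ref{lemma:h1modlh}, and Lemma \ref{lemma:huhcoinv}, collapses to force the desired vanishing). Your note on the $\Mod_g$-equivariance of the isomorphism in the closed case is a small but welcome clarification of a point the paper leaves implicit.
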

\begin{proof}
If $b=n=0$, then Lemma \ref{lemma:h1uhclosed} says that
$\HH_1(U\Sigma_{g,b}^n;\Q) \cong \HH_1(\Sigma_{g,b}^n;\Q)$.
The lemma thus follows in this case from Theorem \ref{theorem:hainh1h}.  
Otherwise, $U\Sigma_{g,b}^n$ is a trivial $S^1$-bundle 
and we have a short exact sequence
$$1 \longrightarrow \Q \longrightarrow \HH_1(U\Sigma_{g,b}^n;\Q) \longrightarrow \HH_1(\Sigma_{g,b}^n;\Q) \longrightarrow 1$$
of $\Mod_{g,b}^n(L)$-modules.  Associated to this is a long exact sequence in $\Mod_{g,b}^n(L)$ homology.
Theorem \ref{theorem:hainh1} says that $\HH_1(\Mod_{g,b}^n(L);\Q) = 0$ and
Lemma \ref{lemma:huhcoinv} says that
$$\HH_0(\Mod_{g,b}^n(L);\HH_1(U\Sigma_{g,b}^n;\Q)) \cong (\HH_1(U\Sigma_{g,b}^n;\Q))_{\Mod_{g,b}^n(L)} = 0.$$
This long exact sequence thus contains the segment
$$0 \longrightarrow \HH_1(\Mod_{g,b}^n(L);\HH_1(U\Sigma_{g,b}^n;\Q)) \longrightarrow \HH_1(\Mod_{g,b}^n(L);\HH_1(\Sigma_{g,b}^n;\Q)) \longrightarrow \Q \longrightarrow 0.$$
Lemma \ref{lemma:h1modlh} says that $\HH_1(\Mod_{g,b}^n(L);\HH_1(\Sigma_{g,b}^n;\Q)) \cong \Q$, and the lemma follows.
\end{proof}

\subsection{Curve stabilizers}

We will also need the following generalization of Theorem \ref{theorem:hainh1}.

\begin{lemma}
\label{lemma:h1modlstab}
Fix $L \geq 2$ and $g,b,k \geq 0$ such that $g-k \geq 3$.  Let $\gamma_1, \ldots, \gamma_k$ be 
disjoint simple closed curves on $\Sigma_{g,b}$ such that
$\Sigma_{g,b} \setminus (\gamma_1 \cup \cdots \cup \gamma_k)$ is connected.  Then 
$\HH_1(\Mod_{g,b,\gamma_1,\ldots,\gamma_k}(L);\Q)=0$.
\end{lemma}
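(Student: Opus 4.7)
The plan is to induct on $k$, with base case $k=0$ given directly by Theorem \ref{theorem:hainh1} (since $\Mod_{g,b,\emptyset}(L) = \Mod_{g,b}(L)$ and $g \geq 3$). For the inductive step with $k \geq 1$, I would single out $\gamma_k$ and view it as a nonseparating simple closed curve on the surface-with-boundary $\Sigma_{g,b,\gamma_1,\ldots,\gamma_{k-1}}$. Imitating the proof of Theorem \ref{theorem:birmanexactsequencel2} (whose engine is \cite[Theorem 4.1]{PutmanCutPaste}), I would establish a split short exact sequence of the form
\begin{equation*}
1 \longrightarrow \overline{K} \longrightarrow \Mod_{g,b,\gamma_1,\ldots,\gamma_k}(L) \longrightarrow \Lambda \longrightarrow 1,
\end{equation*}
where $\Lambda$ is the group obtained from $\Mod_{g,b,\gamma_1,\ldots,\gamma_{k-1}}(L)$ by a capping operation that meshes with the inductive hypothesis, and $\overline{K}$ is an extension $1 \to \Z \to \overline{K} \to K \to 1$ with $\Z$ generated by an $L$th power of a boundary Dehn twist and $K$ the kernel of the reduction map from the $\pi_1$ of an appropriate surface to its mod-$L$ homology.

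Applying the Hochschild--Serre five-term exact sequence in $\Q$-coefficients then yields
\begin{equation*}
(\HH_1(\overline{K};\Q))_\Lambda \longrightarrow \HH_1(\Mod_{g,b,\gamma_1,\ldots,\gamma_k}(L);\Q) \longrightarrow \HH_1(\Lambda;\Q) \longrightarrow 0.
\end{equation*}
The right-hand term vanishes by one more application of the five-term exact sequence to the Birman exact sequence of Corollary \ref{corollary:birmanexactsequencel1} relating $\Lambda$ to $\Mod_{g,b,\gamma_1,\ldots,\gamma_{k-1}}(L)$: the inductive hypothesis handles $\HH_1(\Mod_{g,b,\gamma_1,\ldots,\gamma_{k-1}}(L);\Q)$, and Lemma \ref{lemma:h1modluh} handles the coinvariants of the unit-tangent-bundle kernel. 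For the left-hand term, decompose $\HH_1(\overline{K};\Q) \cong \Q \oplus (K^{\text{ab}} \otimes \Q)$ via the split extension: the $K^{\text{ab}} \otimes \Q$ summand maps naturally into $\HH_1(U\Sigma;\Q)$ for the relevant surface $\Sigma$, so its $\Lambda$-coinvariants die by the same Dehn-twist multiplication trick used in Lemma \ref{lemma:huhcoinv}; and the $\Q$-summand corresponding to the central boundary twist $T_\beta^L$ is killed in the coinvariants by exhibiting a mapping class in $\Lambda$ whose conjugation action changes $T_\beta^L$ by an element whose image in $\HH_1(\overline{K};\Q)$ spans the $\Q$-factor.

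The main obstacle I expect is the establishment of the first split short exact sequence above --- an analog of Theorem \ref{theorem:birmanexactsequencel2} in the surface-with-boundary setting, and for the level-$L$ condition inherited from the embedding of $\Sigma_{g,b,\gamma_1,\ldots,\gamma_k}$ into the ambient closed surface $\Sigma_g$ rather than the intrinsic one. While the template of Theorem \ref{theorem:birmanexactsequencel2}'s proof should adapt, correctly identifying the kernel $\overline{K}$, verifying the splitting, and arranging the quotient $\Lambda$ so that it slots into the inductive hypothesis will all require care. The secondary difficulty is verifying that the central $\Q$-summand of $\HH_1(\overline{K};\Q)$ --- coming from the boundary twist --- indeed dies in the $\Lambda$-coinvariants; this parallels the proof of Lemma \ref{lemma:huhcoinv}, but for the $\Z$-fiber rather than the surface-homology part.
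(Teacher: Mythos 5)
The paper's proof takes a completely different route from yours. It passes to the Torelli quotient: combining Lemma \ref{lemma:torelligenerators} (the group $\Torelli_{g,b,\gamma_1,\ldots,\gamma_k}$ is generated by standard bounding pair maps) with Lemma \ref{lemma:bpkill} (such maps die in $\HH_1(\Mod_{g,b,\gamma_1,\ldots,\gamma_k}(L);\Q)$) collapses the five-term exact sequence of
$$1 \longrightarrow \Torelli_{g,b,\gamma_1,\ldots,\gamma_k} \longrightarrow \Mod_{g,b,\gamma_1,\ldots,\gamma_k}(L) \longrightarrow \Sp_{2g}(\Z,L,a_1,\ldots,a_k) \longrightarrow 1$$
to an isomorphism $\HH_1(\Mod_{g,b,\gamma_1,\ldots,\gamma_k}(L);\Q) \cong \HH_1(\Sp_{2g}(\Z,L,a_1,\ldots,a_k);\Q)$, and then kills the right-hand side by an induction on $k$ in the symplectic group, with base case given by Kazhdan's property (T) for $\Sp_{2g}(\Z,L)$ and inductive step an explicit computation with a Heisenberg-type kernel. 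Your approach avoids all of this and stays on the mapping class group side with a Birman-type induction; that is a legitimate strategy in principle, but as sketched it has at least two concrete problems.

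The sharpest problem is your argument for killing the central $\Q$-summand of $\HH_1(\overline K;\Q)$. You propose to ``exhibit a mapping class in $\Lambda$ whose conjugation action changes $T_\beta^L$.'' But $\beta$ is a boundary component of $\Sigma_{g,b,\gamma_1,\ldots,\gamma_k}$, so $T_\beta$, hence $T_\beta^L$, is \emph{central} in the whole mapping class group $\Mod(\Sigma_{g,b,\gamma_1,\ldots,\gamma_k})$; no conjugation can change it, so no such mapping class exists. What could work is a Trapp-style argument in which some \emph{other} class $v \in \HH_1(\overline K;\Q)$ has $v - g\cdot v$ equal to a nonzero multiple of $[T_\beta^L]$ --- this is what Lemma \ref{lemma:huhcoinv} does for the fiber class of $U\Sigma_{g,1}$ --- but that is a different statement from the one you wrote, and you have not indicated why such $v$ and $g$ exist in the $\overline K$ setting (where the ``other'' classes come from $K^{\mathrm{ab}}\otimes\Q$, which has a rather different module structure than $\HH_1(\Sigma;\Q)$, e.g.\ it contains the large piece $C$ of \cite{PutmanVanishing} in addition to $\HH_1(\Sigma;\Q)$).

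The second problem is the identification of the quotient $\Lambda$ with something the inductive hypothesis covers. Capping one of the $\gamma_k$-boundaries of $\Sigma_{g,b,\gamma_1,\ldots,\gamma_k} \cong \Sigma_{g-k,2k+b}$ produces a surface of genus $g-k$, not genus $g-k+1$, so the resulting quotient is \emph{not} $\Mod_{g,b,\gamma_1,\ldots,\gamma_{k-1}}(L)$ or a Birman extension of it; the genus is off by one. You can embed the capped surface into $\Sigma_{g,b,\gamma_1,\ldots,\gamma_{k-1}}$ with complement a one-holed torus containing $\gamma_k$ (this is exactly the picture in Figure \ref{figure:weakstability}.a), but then $\Lambda$ is a proper subgroup of $\Mod_{g,b,\gamma_1,\ldots,\gamma_{k-1}}(L)$, and the vanishing of $\HH_1$ of the latter by the inductive hypothesis tells you nothing directly about $\HH_1(\Lambda;\Q)$. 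You would need an additional argument identifying $\Lambda$ and computing its abelianization, which is not a small step. In effect your induction would be running through a genuinely new family of groups rather than the $\Mod_{g,b,\gamma_1,\ldots,\gamma_{k'}}(L)$ themselves, and the paper's reduction to the symplectic side sidesteps precisely this difficulty.
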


\begin{remark}
The notation $\Mod_{g,b,\gamma_1,\ldots,\gamma_k}(L)$ is defined above in Definition \ref{definition:curvestab}.
\end{remark}

\noindent
For the proof, we will need two definitions and two lemmas.

\Figure{figure:boundingpair}{BoundingPair}{a. The model for a standard bounding pair map
\CaptionSpace b. $T_{x_1} T_{x_2}^{-1}$ is a standard bounding pair 
map on $\Sigma_{3,\gamma_1,\gamma_2}$
\CaptionSpace c. $T_{x_1} T_{x_2}^{-1}$ is a generalized 
bounding pair map but not a standard bounding pair map on $\Sigma_{3,\gamma_1,\gamma_2}$}

\begin{definition}
A {\em separating twist} on $\Sigma_{g,b}^n$ is a Dehn twist $T_{\gamma}$, where $\gamma$ is a nontrivial separating
simple closed curve on $\Sigma_{g,b}^n$.  A {\em bounding pair map} on $\Sigma_{g,b}^n$ is a product
$T_{\gamma_1} T_{\gamma_2}^{-1}$, where $\gamma_1$ and $\gamma_2$ are disjoint nonisotopic nonseparating
simple closed curves on $\Sigma_{g,b}^n$ such that $\gamma_1 \cup \gamma_2$ separates $\Sigma_{g,b}^n$ (see
Figure \ref{figure:boundingpair}.a).
\end{definition}

\begin{remark}
Observe that separating twists and bounding pair maps lie in $\Torelli_{g,b}^n$.
Building on work of Birman \cite{BirmanSiegel}, Powell \cite{PowellTorelli} proved
that separating twists and bounding pair maps generate $\Torelli_{g,b}^n$ for $b=n=0$.  Later,
Johnson \cite{JohnsonFirst} showed that if $g \geq 3$, then only bounding pair maps are needed.  We will
slightly generalize this below in Lemma \ref{lemma:torelligenerators}.
\end{remark}

\begin{definition}
Fix $g,b,k \geq 0$ such that $g-k \geq 1$.  Let $\gamma_1,\ldots,\gamma_k$ be disjoint
simple closed curves on $\Sigma_{g,b}$ such that $\Sigma_{g,b} \setminus (\gamma_1 \cup \cdots \cup \gamma_k)$ is connected
and let $i : \Sigma_{g,b,\gamma_1,\ldots,\gamma_k} \hookrightarrow \Sigma_{g,b}$ be the inclusion.  Also,
let $T_{y_1} T_{y_2}^{-1} \in \Mod_{1,2}$ be the bounding pair map depicted in Figure \ref{figure:boundingpair}.a.
Then a {\em generalized bounding pair map} on $\Sigma_{g,b,\gamma_1,\ldots,\gamma_k}$ is
a mapping class $T_{x_1} T_{x_2}^{-1}$, where $x_1$ and $x_2$ are disjoint simple closed curves
on $\Sigma_{g,b,\gamma_1,\ldots,\gamma_k}$ such that $T_{i(x_1)} T_{i(x_2)}^{-1}$ is a
bounding pair map in $\Torelli_{g,b}$.  If, in addition, there exists an embedding 
$\Sigma_{1,2} \hookrightarrow \Sigma_{g,b,\gamma_1,\ldots,\gamma_k}$ that takes $y_1$ and $y_2$ to $x_1$ and $x_2$, respectively,
then we will say that $T_{x_1} T_{x_2}^{-1}$ is a {\em standard
bounding pair map}.  See Figure \ref{figure:boundingpair}.b--c.  Finally if $x$ is a simple closed
curve on $\Sigma_{g,b,\gamma_1,\ldots,\gamma_k}$ such that $T_{i(x)}$ is a separating twist on $\Sigma_{g,b}$, then we will say
that $T_x$ is a {\em generalized separating twist}.
\end{definition}

\begin{lemma}[{\cite[proof of Lemma 6.4]{PutmanPicardGroup}}]
\label{lemma:bpkill}
Fix $g,b,k \geq 0$ such that $g-k \geq 1$.  Let $T_{x_1} T_{x_2}^{-1}$ be a standard bounding pair map
on $\Sigma_{g,b,\gamma_1,\ldots,\gamma_k}$.  Then the class of $T_{x_1} T_{x_2}^{-1}$ in
$\HH_1(\Mod_{g,b,\gamma_1,\ldots,\gamma_k}(L);\Q)$ is trivial.
\end{lemma}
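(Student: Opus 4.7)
Set $\Gamma := \Mod_{g,b,\gamma_1,\ldots,\gamma_k}(L)$. I want to show $T_{x_1}T_{x_2}^{-1}$ has trivial class in $\HH_1(\Gamma;\Q)$. Since $x_1$ and $x_2$ are disjoint, the twists $T_{x_1}$ and $T_{x_2}$ commute, so $(T_{x_1}T_{x_2}^{-1})^L = T_{x_1}^L T_{x_2}^{-L}$. Because $L$ is invertible in $\Q$, it therefore suffices to exhibit $T_{x_1}^L T_{x_2}^{-L}$ as a commutator inside $\Gamma$. Note that $T_{x_1}^L$ automatically lies in $\Gamma$: any $L$-th power of a Dehn twist acts trivially on $\HH_1(\Sigma_g;\Z/L)$.

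\textbf{Key step.} The plan is to produce a mapping class $\phi\in\Gamma$ with $\phi(x_1)=x_2$. Given such $\phi$, the standard identity $\phi T_{x_1}^L\phi^{-1} = T_{\phi(x_1)}^L = T_{x_2}^L$ yields
$$[\phi, T_{x_1}^L] = T_{x_2}^L T_{x_1}^{-L},$$
exhibiting $T_{x_1}^L T_{x_2}^{-L}$ (up to inversion) as a commutator of two elements of $\Gamma$. Its class in $\HH_1(\Gamma;\Q)$ therefore vanishes, and dividing by $L$ gives the lemma.

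\textbf{Main obstacle.} The heart of the argument is the construction of $\phi$. The necessary homological compatibility is automatic: since $x_1\cup x_2$ separates $\Sigma_g$ (the defining property of a bounding pair), we have $[x_1]=[x_2]$ in $\HH_1(\Sigma_g;\Z)$, hence also in $\HH_1(\Sigma_g;\Z/L)$. For existence, I would exploit the hypothesis that $T_{x_1}T_{x_2}^{-1}$ is a \emph{standard} bounding pair, which supplies by definition an embedding $\iota:\Sigma_{1,2}\hookrightarrow\Sigma_{g,b,\gamma_1,\ldots,\gamma_k}$ carrying $(y_1,y_2)$ to $(x_1,x_2)$. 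A hyperelliptic-type involution supported on $\iota(\Sigma_{1,2})$ interchanges $x_1$ and $x_2$ but in general acts nontrivially on $\HH_1(\Sigma_g;\Z/L)$ (through its action on the homology contributed by the genus-$1$ subsurface). The assumption $g-k\geq 1$ leaves enough topology in the complement of $\iota(\Sigma_{1,2})$ to admit a correcting mapping class, supported there, whose composition with the involution lies in $\Gamma$. Carrying out this correction is the technical core of the argument and amounts to a change-of-coordinates statement: $\Gamma$ acts transitively on the set of nonseparating simple closed curves in $\Sigma_g$ having a prescribed class in $\HH_1(\Sigma_g;\Z/L)$.
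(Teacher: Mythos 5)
Your high-level reduction is correct: $\HH_1(\Gamma;\Q)$ is a $\Q$-vector space, so it suffices to kill $(T_{x_1}T_{x_2}^{-1})^L=T_{x_1}^LT_{x_2}^{-L}$, and if some $\phi\in\Gamma$ satisfies $\phi(x_1)=x_2$, then $T_{x_1}^L$ and $T_{x_2}^L$ are conjugate in $\Gamma$ and hence equal in $\HH_1(\Gamma;\Q)$. (The paper itself does not prove this lemma but cites \cite[proof of Lemma 6.4]{PutmanPicardGroup}, which uses a lantern-relation argument, with the adaptation of McCarthy's \cite{McCarthyCofinite} argument mentioned as an alternative; so your route is genuinely different.)

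However, the crux --- producing $\phi$ --- is asserted rather than proved, and the assertion as you state it is not the right one. The group $\Gamma=\Mod_{g,b,\gamma_1,\ldots,\gamma_k}(L)$ is the mapping class group of the \emph{cut} surface $\Sigma_{g,b,\gamma_1,\ldots,\gamma_k}\cong\Sigma_{g-k,2k+b}$, fixing its boundary pointwise; it does not act on arbitrary nonseparating curves of $\Sigma_g$, only on curves lying in the cut surface. So ``$\Gamma$ acts transitively on nonseparating simple closed curves in $\Sigma_g$ with a prescribed class in $\HH_1(\Sigma_g;\Z/L)$'' is not the needed statement (indeed it is false as written: a curve of $\Sigma_g$ not isotopic into the cut surface cannot be in the $\Gamma$-orbit of $x_1$). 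The correct change-of-coordinates claim concerns curves on the cut surface, and even after fixing the statement one must show that the stabilizer of $x_2$ in $\Mod_{g,b,\gamma_1,\ldots,\gamma_k}$ surjects onto the relevant subgroup of $\Sp_{2g}(\Z/L)$; none of this is carried out. Your sketch via a hyperelliptic-type involution also has concrete problems: such an involution of $\iota(\Sigma_{1,2})$ exchanging $x_1$ and $x_2$ exchanges the two boundary circles of $\iota(\Sigma_{1,2})$ as well, so it does not extend by the identity to a mapping class of the cut surface fixing its boundary pointwise; and when $g-k=1$ the complement of $\iota(\Sigma_{1,2})$ is planar, so no ``correcting'' mapping class supported there can undo the involution's action on $\HH_1(\Sigma_g;\Z/L)$. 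Thus the lemma's hypothesis $g-k\geq 1$ does not, on its own, make your correction step go through, and the proposal as written has a genuine gap at its central step.
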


\begin{remark}
The proof of the main result of \cite{PutmanPicardGroup} depends on Theorem \ref{theorem:mainh2closed}, 
but the proof of \cite[Lemma 6.4]{PutmanPicardGroup} does not, so
no circularity is being introduced.  We decided to prove this lemma in \cite{PutmanPicardGroup}
because that paper required a slightly more precise result.
\end{remark}

\begin{remark}
If $k = 0$ and $b \leq 1$, then Lemma \ref{lemma:bpkill} is also contained in
the proof of a theorem of McCarthy \cite[Theorem 1.1]{McCarthyCofinite}.  One could also
adapt this proof to prove Lemma \ref{lemma:bpkill}.  
\end{remark}

\begin{lemma}
\label{lemma:torelligenerators}
Fix $g,b,k \geq 0$ such that $g-k \geq 3$.  Let $\gamma_1, \ldots, \gamma_k$ be disjoint 
simple closed curves on $\Sigma_{g,b}$ such that
$\Sigma_{g,b} \setminus (\gamma_1 \cup \cdots \cup \gamma_k)$ is connected.  Then
$\Torelli_{g,b,\gamma_1,\ldots,\gamma_k}$ is generated by its set of standard bounding pair maps.
\end{lemma}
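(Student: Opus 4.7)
The plan is to deduce this from Johnson's theorem \cite{JohnsonFirst}, which says the Torelli group of a surface of genus at least $3$ is generated by standard bounding pair maps. Setting $\Sigma' := \Sigma_{g,b,\gamma_1,\ldots,\gamma_k} \cong \Sigma_{g-k,2k+b}$, the hypothesis $g-k \geq 3$ makes Johnson's theorem applicable to $\Sigma'$, yielding that $\Torelli(\Sigma')$ is generated by standard bounding pair maps in $\Sigma'$. The subtlety is that $\Torelli_{g,b,\gamma_1,\ldots,\gamma_k} := i^{-1}(\Torelli_{g,b})$, for $i : \Mod(\Sigma') \to \Mod_{g,b}$ the inclusion-induced map, does not coincide with $\Torelli(\Sigma')$: for instance, a boundary twist $T_\beta$ about a $\gamma_j$-boundary of $\Sigma'$ lies in $\Torelli(\Sigma')$ but not in $\Torelli_{g,b,\gamma_1,\ldots,\gamma_k}$, since $i(T_\beta) = T_{\gamma_j} \notin \Torelli_{g,b}$.

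I would analyze $\Torelli_{g,b,\gamma_1,\ldots,\gamma_k}$ via the short exact sequence
\[
1 \longrightarrow \ker(i) \longrightarrow \Torelli_{g,b,\gamma_1,\ldots,\gamma_k} \longrightarrow G \longrightarrow 1,
\]
where $\ker(i)$ is generated by the elements $T_{\beta_j} T_{\beta_j'}^{-1}$ for paired boundary components $\beta_j, \beta_j'$ of $\Sigma'$ coming from $\gamma_j$ (each such element lies in $\Torelli_{g,b,\gamma_1,\ldots,\gamma_k}$ because its image under $i$ is trivial), and $G$ denotes the simultaneous stabilizer in $\Torelli_{g,b}$ of the isotopy classes $\gamma_1,\ldots,\gamma_k$. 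It suffices to show: (A) $G$ is generated by bounding pair maps in $\Sigma_{g,b}$ whose defining pair of curves cobounds a copy of $\Sigma_{1,2}$ disjoint from all the $\gamma_i$; and (B) each generator $T_{\beta_j} T_{\beta_j'}^{-1}$ of $\ker(i)$ lies in the subgroup of $\Mod(\Sigma')$ generated by standard bounding pair maps. Any bounding pair of curves satisfying (A) lifts uniquely to a standard bounding pair in $\Sigma'$, which maps under $i$ back to the original and so belongs to $\Torelli_{g,b,\gamma_1,\ldots,\gamma_k}$.

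For (A), I would start from Johnson's generating set for $\Torelli_{g,b}$ and apply a change-of-coordinates argument within the Torelli group to push each standard bounding pair off the curves $\gamma_i$: the genus surplus $g-k \geq 3$ provides room for the $\Sigma_{1,2}$-piece bounded by the pair to be slid off each $\gamma_i$, possibly at the cost of multiplying by further standard bounding pair maps whose $\Sigma_{1,2}$-pieces are already disjoint from the $\gamma_i$. For (B), using $g-k \geq 3$ again, one finds an embedded $\Sigma_{1,2} \subset \Sigma'$ having $\beta_j$ as one boundary component and a simple closed curve $z_j \subset \Sigma'$ as the other; then $T_{\beta_j} T_{z_j}^{-1}$ is a standard bounding pair map in $\Sigma'$, and chaining together such identities for $\beta_j$ and $\beta_j'$ through an auxiliary pair $(z_j, z_j')$ cobounding a further $\Sigma_{1,2}$ expresses $T_{\beta_j} T_{\beta_j'}^{-1}$ as a product of standard bounding pair maps. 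The main obstacle will be (A), the relative version of Johnson's theorem stabilizing the set $\gamma_1,\ldots,\gamma_k$: the geometric surgery to replace bounding pairs crossing the $\gamma_i$'s with equivalent products of bounding pairs avoiding them is delicate, and its spirit parallels the proof of \cite[Theorem 4.1]{PutmanCutPaste}.
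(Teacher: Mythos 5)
Your approach is genuinely different from the paper's, but it has a substantial gap, and the gap is precisely the part the paper dispatches by citing \cite[Theorem 1.3]{PutmanCutPaste}. That theorem states directly that $\Torelli_{g,b,\gamma_1,\ldots,\gamma_k}$ is generated by generalized separating twists and generalized bounding pair maps; the paper's proof then only has to reduce these generators to standard bounding pair maps using the lantern relation and a chain relation. By contrast, you attempt to re-derive something like this result from Johnson's theorem via the exact sequence $1 \to \ker(i) \to \Torelli_{g,b,\gamma_1,\ldots,\gamma_k} \to G \to 1$, with the real work hidden in your step (A).

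Step (A) is where the proposal breaks down. You propose to take Johnson's generating set for an element $f \in G$ and "push each bounding pair off the curves $\gamma_i$ by a change of coordinates within the Torelli group." But when you write $f$ as a product of bounding pair maps via Johnson's theorem, the \emph{individual factors} do not stabilize the $\gamma_i$ (only the whole product does), so there is no well-defined sense in which you can push each factor off the $\gamma_i$ one at a time while staying inside $G$. This is exactly the difficulty that makes relative Torelli generation theorems hard; the standard way around it (as in \cite{PutmanCutPaste}) is an action on a highly-connected complex or a Birman-type exact sequence induction, not a local surgery on an arbitrary Johnson word. You acknowledge (A) is delicate and "parallels the spirit of" \cite[Theorem 4.1]{PutmanCutPaste}, but what you actually need is \cite[Theorem 1.3]{PutmanCutPaste}, and your sketch does not supply a substitute. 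A secondary issue: you invoke Johnson's generating theorem for $\Torelli_{g,b}$, but the Powell/Johnson results cited in the paper are stated for $b=n=0$; for $b>0$ you would again need a boundary version, which is once more supplied by \cite{PutmanCutPaste}.

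Your step (B) is fine in spirit and is close to the paper's chain-relation argument (Figure~\ref{figure:relations}.b): expressing $T_{\beta_j}T_{\beta_j'}^{-1}$ as a chain of standard bounding pair maps through auxiliary curves works. But note the paper must additionally handle generalized separating twists via the lantern relation, which your decomposition does not surface because you never reach the generating set of generalized separating twists and generalized bounding pair maps.
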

\begin{proof}
By \cite[Theorem 1.3]{PutmanCutPaste},
the group $\Torelli_{g,b,\gamma_1,\ldots,\gamma_k}$ is generated by its set of generalized separating twists
and generalized bounding pair maps.  Our goal, therefore, 
is to express every generalized separating twist and generalized bounding pair 
map as a product of standard bounding pair maps.  Our main tool
will be the {\em lantern relation} (see \cite{PutmanInfinite}), which is the relation 
$$(T_{x_1}T_{x_2}^{-1})(T_{y_1}T_{y_2}^{-1})(T_{z_1}T_{z_2}^{-1}) = T_{\beta}$$
for curves $x_1, x_2, y_1, y_2, z_1, z_2$, and $\beta$ as shown in Figure \ref{figure:relations}.a.  

\Figure{figure:relations}{Relations}{a. The lantern relation
$(T_{x_1}T_{x_2}^{-1})(T_{y_1}T_{y_2}^{-1})(T_{z_1}T_{z_2}^{-1}) = T_{\beta}$ \CaptionSpace
b. A relation of the form $T_{a} T_{d}^{-1} = (T_a T_b^{-1})(T_b T_c^{-1})(T_c T_{d}^{-1})$ \CaptionSpace
c. Curves need for proof of Lemma \ref{lemma:torelligenerators}}

If $T_{\beta}$ is any generalized separating twist, then since $g-k \geq 3$ we can 
find a lantern relation
$$(T_{x_1}T_{x_2}^{-1})(T_{y_1}T_{y_2}^{-1})(T_{z_1}T_{z_2}^{-1}) = T_{\beta}$$
all of whose bounding pair maps are standard bounding pair maps.  This is the desired
expression.

Now let $T_{x_1} T_{x_2}^{-1}$ be any generalized bounding pair map.  It is easy to see that $T_{x_1} T_{x_2}^{-1}$
is a standard bounding pair map if and only if the $x_i$ are nonseparating curves on $\Sigma_{g,b,\gamma_1,\ldots,\gamma_k}$,
and moreover it is easy to see that is is impossible for one of the $x_i$ to be a separating curve and the other
to be a nonseparating curve.  Assume, therefore, that the $x_i$ are separating curves.  
Using a relation like that described in Figure \ref{figure:relations}.b,
we can write
$$T_{x_1} T_{x_2}^{-1} = (T_{y_1^1} T_{y_2^1}^{-1}) \cdots (T_{y_1^m} T_{y_2^m}^{-1}),$$
where for all $1 \leq i \leq m$, the mapping class $T_{y_1^i} T_{y_2^i}^{-1}$ is a generalized
bounding pair map such that the component $S_i$ of $\Sigma_{g,b,\gamma_1,\ldots,\gamma_k} \setminus (y_1^i \cup y_2^i)$
with $y_1^i,y_2^i \subset \partial S_i$ is homeomorphic to either $\Sigma_{1,2}$ or $\Sigma_{0,3}$.

Fix some $1 \leq i \leq m$.  Our goal is to write $T_{y_1^i} T_{y_2^i}^{-1}$ as a product
of standard bounding pair maps.  Flipping $y_1^i$ and $y_2^i$ (which has the harmless effect of
inverting $T_{y_1^i} T_{y_2^i}^{-1}$) if necessary, we can assume that
there is a positive genus component $T_i$ of $\Sigma_{g,b,\gamma_1,\ldots,\gamma_k} \setminus (y_1^i \cup y_2^i)$ with
$y_1^i \subset \partial T_i$ and $y_2^i \nsubseteq \partial T_i$.  We can then (see Figure \ref{figure:relations}.c) 
find a generalized separating
twist $T_{\beta_i}$ with $\beta_i \subset S_i$ and two simple closed curves $x_2^i,z_2^i \subset T_i$ that
do not separate $\Sigma_{g,b,\gamma_1,\ldots,\gamma_k}$ such that there
is a subsurface $U_i$ of $\Sigma_{g,b,\gamma_1,\ldots,\gamma_k}$ with $U_i \cong \Sigma_{0,4}$, with
$\partial U_i = x_2^i \cup y_2^i \cup z_2^i \cup \beta_i$, and with $y_1^i \subset U_i$.  There
are then simple closed curves $x_1^i$ and $z_1^i$ in $U_i$ such that we have a lantern
relation
$$T_{\beta_i} = (T_{x_1^i} T_{x_2^i}^{-1}) (T_{y_1^i} T_{y_1^i}^{-1}) (T_{z_1^i} T_{z_2^i}^{-1}),$$
where both $T_{x_1^i} T_{x_2^i}^{-1}$ and $T_{z_1^i} T_{z_2^i}^{-1}$ are generalized bounding pair maps.  Since
$x_2^i$ and $z_2^i$ are nonseparating curves, it follows that in fact $T_{x_1^i} T_{x_2^i}^{-1}$ and
$T_{z_1^i} T_{z_2^i}^{-1}$ are standard bounding pair maps.  Since we have already shown that
we can express the generalized separating twist $T_{\beta_i}$ as a product of standard bounding
pair maps, we are done.
\end{proof}

\begin{proof}[Proof of Lemma \ref{lemma:h1modlstab}]
Set $a_i = [\gamma_i]$ for $1 \leq i \leq k$ and let $\Sp_{2g}(\Z,L,a_1,\ldots,a_k)$ denote the image of 
$\Mod_{g,b,\gamma_1,\ldots,\gamma_k}(L)$ in $\Sp_{2g}(\Z,L)$.  We thus have an exact sequence
$$1 \longrightarrow \Torelli_{g,b,\gamma_1,\ldots,\gamma_k} \longrightarrow \Mod_{g,b,\gamma_1,\ldots,\gamma_k}(L) \longrightarrow
\Sp_{2g}(\Z,L,a_1,\ldots,a_k) \longrightarrow 1.$$
Combining Lemma \ref{lemma:torelligenerators} with Lemma \ref{lemma:bpkill}, we 
conclude that the image of 
$\HH_1(\Torelli_{g,b,\gamma_1,\ldots,\gamma_k};\Q)$ in $\HH_1(\Mod_{g,b,\gamma_1,\ldots,\gamma_k}(L);\Q)$ is zero.  The
five term exact sequence associated to our exact sequence thus degenerates into an isomorphism
$$\HH_1(\Mod_{g,b,\gamma_1,\ldots,\gamma_k}(L);\Q) \cong \HH_1(\Sp_{2g}(Z,L,a_1,\ldots,a_k);\Q).$$
Our goal is to prove that $\HH_1(\Sp_{2g}(Z,L,a_1,\ldots,a_k);\Q) = 0$.

The proof will be by induction on $k$.  The base case $k=0$ simply asserts that $\HH_1(\Sp_{2g}(\Z,L);\Q)=0$, which 
follows from the fact that $\Sp_{2g}(\Z,L)$ satisfies Kazhdan's property (T) (see, e.g.,\ \cite[Theorems 7.1.4 and
7.1.7]{ZimmerBook}).  
Assume now that $k>0$ and that the result is true for all smaller $k$.  Extend the 
$a_i$ to a symplectic basis $\{a_1,b_1,\ldots,a_g,b_g\}$ for $\HH_1(\Sigma_g;\Z)$, 
and consider $\phi \in \Sp_{2g}(\Z,L,a_1,\ldots,a_k)$.  Observe
that since $\phi(a_1) = a_1$, the map $\phi$ must preserve the subspace $V = \langle a_1,a_2,b_2,\ldots,a_g,b_g \rangle$.
We therefore get an induced map on the $2(g-1)$-dimensional symplectic $\Z$-module $V' = V / \langle a_1 \rangle$.  Letting
$a_i'$ be the image of $a_i$ in $V'$ for $2 \leq i \leq k$, we get a homomorphism
$$\pi : \Sp_{2g}(\Z,L,a_1,\ldots,a_k) \longrightarrow \Sp_{2(g-1)}(\Z,L,a_2',\ldots,a_k').$$
Moreover, there is clearly a right-inverse to $\pi$ (``extend by the identity'').  In other words,
for some group $K$ we have a split exact sequence
$$1 \longrightarrow K \longrightarrow \Sp_{2g}(\Z,L,a_1,\ldots,a_k) \stackrel{\pi}{\longrightarrow} \Sp_{2(g-1)}(\Z,L,a_2',\ldots,a_k') \longrightarrow 1.$$
This induces an isomorphism
\begin{align*}
\HH_1(\Sp_{2g}(\Z,L,a_1,\ldots,a_k);\Q) \cong &\HH_1(\Sp_{2(g-1)}(\Z,L,a_2',\ldots,a_k');\Q) \\
                                              &\oplus \HH_1(K;\Q)_{\Sp_{2(g-1)}(\Z,L,a_2',\ldots,a_k')}.
\end{align*}
By induction the first term is zero, so we must prove that the second term is zero.

Consider $\psi \in K$.  Letting $V'' = \langle a_1,\ldots,a_k,a_{k+1},b_{k+1},\ldots,a_g,b_g \rangle$, it is easy to see that
$\psi(b_1) = b_1 + v$ for some $v \in V''$.  We claim that
$\psi$ is determined by $v$.  Indeed, consider $s \in \{a_2,b_2,\ldots,a_g,b_g\}$.  Since $\psi \in K$,
there is some $k \in \Z$ such that $\psi(s) = s + k a_1$.  However, the integer $k$ is determined
by the fact that 
$$0 = i(b_1,s) = i(\psi(b_1),\psi(s)) = i(b_1 + v, s + k a_1),$$
whence the claim.  Conversely, any value of $v$ may occur, so the elements of $K$ are in bijection
with vectors in $V''$.  However, $K$ is not quite the additive group $V''$; indeed, setting
$V''' = \langle a_2, \ldots, a_k, a_{k+1}, b_{k+1}, \ldots, a_g, b_g \rangle$, it is easy to see that $K$ may be identified with
pairs 
$$\{\text{$(n,w)$ $|$ $n \in \Z$ and $w \in V''$}\},$$ 
where $n$ is the $a_1$-coordinate of the corresponding vector in $V'$ and
where we have the multiplication rule
$$(n_1, w_1) \cdot (n_2, w_2) = (n_1 + n_2 + i(w_1,w_2), w_1 + w_2).$$
The group $\Sp_{2(g-1)}(\Z,L,a_2',\ldots,a_k')$ acts on the second coordinate.
It is an easy exercise to see that with this description we have
$$\HH_1(K;\Q)_{\Sp_{2(g-1)}(\Z,L,a_2',\ldots,a_k')} = 0,$$
and we are done.
\end{proof}

\section{Reduction to the closed level $L$ subgroups}
\label{section:closedlevel}
The goal of this section is to reduce the proof of Theorem \ref{theorem:mainh2} to the following
theorem.

\begin{theorem}[{Rational $\HH_2$ of level $L$ subgroups, closed surfaces}]
\label{theorem:mainh2level}
For $L \geq 3$ and $g \geq 5$, we have $\HH_2(\Mod_{g}(L);\Q) \cong \HH_2(\Mod_{g};\Q)$.
\end{theorem}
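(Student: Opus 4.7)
The plan is to execute the strategy sketched in the introduction. Harer's theorem \cite{HarerH2} gives $\HH_2(\Mod_g;\Q) \cong \Q$, and the transfer homomorphism forces the restriction $\HH_2(\Mod_g(L);\Q) \rightarrow \HH_2(\Mod_g;\Q)$ to be surjective, so the real content of Theorem \ref{theorem:mainh2level} is the upper bound $\Dim_\Q \HH_2(\Mod_g(L);\Q) \leq 1$. The first reduction, to be carried out in \S \ref{section:stabilitytrick}, is the stability trick (Lemma \ref{lemma:stabilitytrick}): it reduces Theorem \ref{theorem:mainh2level} to Theorem \ref{theorem:stability}, which asserts that for a nonseparating simple closed curve $\gamma \subset \Sigma_g$ the natural map
$$\HH_2((\Mod_g(L))_\gamma;\Q) \longrightarrow \HH_2(\Mod_g(L);\Q)$$
is surjective. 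The equivalence uses the Birman-type exact sequence of Theorem \ref{theorem:birmanexactsequencel2} to compare $(\Mod_g(L))_\gamma$ with $\Mod_{g-1,1}(L)$.

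To prove Theorem \ref{theorem:stability}, I would run the standard Quillen-style homological stability argument applied to the action of $\Mod_g(L)$ on the nonseparating complex of curves $\CNosep_g$. Harer's high-connectivity result for $\CNosep_g$, fed into the Borel-construction spectral sequence together with the transitivity of $\Mod_g(L)$ on simplices of $\CNosep_g$ of positive dimension and standard low-degree analysis, yields for $g \geq 5$ the surjectivity of the natural map
$$\bigoplus_{\gamma \in (\CNosep_g/\Mod_g(L))^{(0)}} \HH_2((\Mod_g(L))_{\tilde\gamma};\Q) \longrightarrow \HH_2(\Mod_g(L);\Q),$$
where $\tilde\gamma$ is an arbitrary lift of $\gamma$ to $\CNosep_g$.

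The main obstacle is that, unlike $\Mod_g$, the group $\Mod_g(L)$ does not act transitively on nonseparating simple closed curves: its orbits are indexed by primitive elements of $\HH_1(\Sigma_g;\Z/L)$, so the display above has many summands rather than a single one. To collapse the direct sum to a single orbit, I would prove that for any two orbit representatives $\gamma,\gamma'$ the images of the two stabilizer $\HH_2$'s in $\HH_2(\Mod_g(L);\Q)$ coincide. The strategy is to interpolate between $\gamma$ and $\gamma'$ by a sequence of elementary moves changing the homology class by one primitive vector at a time, and to analyze each such move via the Hochschild--Serre spectral sequence for the split extension
$$1 \longrightarrow \overline{K}_{g-1,1} \longrightarrow \Mod_{g,\gamma}(L) \longrightarrow \Mod_{g-1,1}(L) \longrightarrow 1$$
of Theorem \ref{theorem:birmanexactsequencel2}. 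The resulting obstruction to agreement lives in a twisted first homology group of the form $\HH_1(\Mod_{g-1,1}(L);M)$, where $M$ is a module built from the first homology of an abelian cover of $\Sigma_{g-1,1}$; its vanishing is precisely what the author's vanishing theorem \cite{PutmanVanishing} (see Lemma \ref{lemma:vanishing}) is designed to deliver, supplemented by the twisted first-homology computations of \S \ref{section:firsthomology}.

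I expect the hardest step to be the precise identification of this obstruction class in the form required by Lemma \ref{lemma:vanishing}; the remaining ingredients (high connectivity of $\CNosep_g$, the Birman exact sequence of Theorem \ref{theorem:birmanexactsequencel2}, and the twisted first-homology vanishing results of \S \ref{section:firsthomology}) are already in place. Once the direct sum collapses to a single summand, Theorem \ref{theorem:stability} follows, and the stability trick then delivers Theorem \ref{theorem:mainh2level}.
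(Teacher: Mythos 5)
Your high-level outline matches the paper's strategy closely: reduce via the stability trick (Lemma \ref{lemma:stabilitytrick}), decompose $\HH_2(\Mod_g(L);\Q)$ over vertex stabilizers in $\CNosep_g$, collapse the direct sum by showing every summand has the same image, and use the Birman exact sequence of Theorem \ref{theorem:birmanexactsequencel2} plus the vanishing theorem from \cite{PutmanVanishing} for the final computation. However, there are two genuine gaps in the middle step.

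First, your claim that $\Mod_g(L)$ acts transitively on simplices of $\CNosep_g$ of positive dimension is false. A $1$-simplex consists of two disjoint nonseparating curves, and $\Mod_g(L)$ preserves each of their mod $L$ homology classes, so the orbit set of $1$-simplices is indexed by unordered pairs of lax primitive vectors, not by a point. What the equivariant homology machinery (Theorem \ref{theorem:equivarianthomologythm}) actually requires in place of transitivity is the vanishing of $\HH_1$ of edge stabilizers with $\Q$-coefficients, which is Lemma \ref{lemma:h1modlstab}; you do not identify this as a needed input. Second, and more seriously, you do not flag that the equivariant homology theorem requires the quotient $\CNosep_g/\Mod_g(L)$ to be $2$-connected. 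This is not automatic from Harer's connectivity of $\CNosep_g$ itself; it is a separate combinatorial result (Proposition \ref{proposition:cnosepmodlcon}), proved by identifying the quotient with the complex $\Bases(g,L)$ of lax isotropic bases in $\HH_1(\Sigma_g;\Z/L)$ and running a Euclidean-algorithm-style connectivity argument. Without this, the decomposition theorem does not follow from the ``standard machinery,'' which is precisely why the introduction warns that the usual homological stability approach must break for $\Mod_g(L)$. A more minor point: the paper ultimately works with the cut stabilizers $\Mod_{g,\gamma}(L)$ rather than $(\Mod_g(L))_\gamma$ because the weak stability statement (Proposition \ref{proposition:weakstability}) is an isomorphism for cut stabilizers, whereas $\HH_2((\Mod_g(L))_\gamma;\Q)$ is strictly larger by a $\Q$; passing from the weak decomposition over $(\Mod_g(L))_\gamma$ to the strong one over $\Mod_{g,\gamma}(L)$ requires the additional argument of \S \ref{section:decompthmstrong}.
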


\begin{remark}
While Theorem \ref{theorem:mainh2} applies $\Mod_{g,b}^n(L)$ for all $L \geq 2$, 
we emphasize that Theorem \ref{theorem:mainh2level} has the hypothesis $L \geq 3$.  This
stronger hypothesis is used below in the proof of Proposition \ref{proposition:decompthm}
to ensure that $\Mod_{g}(L)$ cannot reverse the orientation of a nonseparating simple closed curve.
\end{remark}

\subsection{Reduction to the level $L$ subgroups}

We first reduce the proof of Theorem \ref{theorem:mainh2} to the following theorem.

\begin{theorem}[{Rational $\HH_2$ of level $L$ subgroups, general surfaces}]
\label{theorem:mainh2levelbdry}
For $L \geq 3$, $g \geq 5$, and $b,n \geq 0$, we have $\HH_2(\Mod_{g,b}^n(L);\Q) \cong \HH_2(\Mod_{g,b}^n;\Q)$.
\end{theorem}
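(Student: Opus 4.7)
The plan is to prove Theorem~\ref{theorem:mainh2levelbdry} by induction on $b+n$, with the closed case Theorem~\ref{theorem:mainh2level} providing the base $(b,n)=(0,0)$. The standard transfer argument shows that the natural map $\HH_2(\Mod_{g,b}^n(L);\Q) \to \HH_2(\Mod_{g,b}^n;\Q)$ is surjective, and Harer's theorem yields $\HH_2(\Mod_{g,b}^n;\Q) \cong \Q^{n+1}$, so it suffices to verify $\dim_{\Q}\HH_2(\Mod_{g,b}^n(L);\Q) = n+1$. The induction uses two moves: adding a boundary component, which will preserve $\HH_2$, and converting a boundary component into a puncture, which will raise $\dim\HH_2$ by exactly one. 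Iterating these from the base case reaches every $(b,n)$.

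For the first move I would apply the Hochschild--Serre spectral sequence to the extension of Corollary~\ref{corollary:birmanexactsequencel1},
$$1 \longrightarrow \pi_1(U\Sigma_{g,b}^n) \longrightarrow \Mod_{g,b+1}^n(L) \longrightarrow \Mod_{g,b}^n(L) \longrightarrow 1.$$
Of the three $E^2$ terms contributing to $\HH_2$, the term $E^2_{1,1} = \HH_1(\Mod_{g,b}^n(L); \HH_1(U\Sigma_{g,b}^n;\Q))$ vanishes by Lemma~\ref{lemma:h1modluh}. To kill $E^2_{0,2} = \HH_2(U\Sigma_{g,b}^n;\Q)_{\Mod_{g,b}^n(L)}$, I would identify $\HH_2(U\Sigma_{g,b}^n;\Q) \cong \HH_1(\Sigma_{g,b}^n;\Q)$ as a $\Mod_{g,b}^n$-module: in the closed case $b=n=0$ this uses Poincar\'e duality on the closed $3$-manifold $U\Sigma_g$ together with the algebraic intersection form on $\HH_1(\Sigma_g;\Q)$, and otherwise $U\Sigma_{g,b}^n \simeq S^1 \times \Sigma_{g,b}^n$ so the K\"unneth formula suffices; Lemma~\ref{lemma:huhcoinv} then kills the coinvariants. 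The only potentially nonzero differential out of $E^2_{2,0}$ is $d_2\colon E^2_{2,0} \to E^2_{0,1} = \HH_1(U\Sigma_{g,b}^n;\Q)_{\Mod_{g,b}^n(L)}$, whose target is again zero by Lemma~\ref{lemma:huhcoinv}. Consequently the edge map $\HH_2(\Mod_{g,b+1}^n(L);\Q) \to \HH_2(\Mod_{g,b}^n(L);\Q)$ is an isomorphism.

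For the second move I would use the central extension obtained by replacing a boundary component $\beta$ of $\Sigma_{g,b+1}^n$ by a puncture. Since $\beta$ bounds a disc in the filled-in $\Sigma_g$, the twist $T_\beta$ lies in $\Torelli_{g,b+1}^n \subset \Mod_{g,b+1}^n(L)$, so restricting the standard central extension $1 \to \langle T_\beta \rangle \to \Mod_{g,b+1}^n \to \Mod_{g,b}^{n+1} \to 1$ to level $L$ preserves the full copy of $\Z$ in the kernel and produces
$$1 \longrightarrow \Z \longrightarrow \Mod_{g,b+1}^n(L) \longrightarrow \Mod_{g,b}^{n+1}(L) \longrightarrow 1.$$
Using $\HH_1(\Mod_{g,b+1}^n(L);\Q) = \HH_1(\Mod_{g,b}^{n+1}(L);\Q) = 0$ from Theorem~\ref{theorem:hainh1}, the Gysin sequence in degree $2$ collapses to
$$0 \longrightarrow \HH_2(\Mod_{g,b+1}^n(L);\Q) \longrightarrow \HH_2(\Mod_{g,b}^{n+1}(L);\Q) \longrightarrow \Q \longrightarrow 0,$$
so $\dim\HH_2$ increases by exactly one, matching the shift $n \mapsto n+1$ in the target dimension. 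The step demanding the most care is verifying that this restricted central extension really has kernel $\Z$ rather than $L\Z$; once the observation $T_\beta \in \Mod_{g,b+1}^n(L)$ is made, the rest of the argument is spectral sequence bookkeeping combined with the vanishing lemmas of Section~\ref{section:firsthomology}.
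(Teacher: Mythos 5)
Your proposal is correct and takes essentially the same route as the paper: the paper also reduces to the closed case via exactly your two moves---the level-$L$ Birman exact sequence with kernel $\pi_1(U\Sigma_{g,b})$ analyzed through the Hochschild--Serre spectral sequence (using Lemmas \ref{lemma:huhcoinv}, \ref{lemma:killoffuh}, and \ref{lemma:h1modluh}), and the central extension by $\langle T_\beta \rangle$ handled by the Gysin sequence (Lemma \ref{lemma:eliminatepuncture}), including the same observation that $T_\beta \in \Torelli_{g,b+1}^n$ keeps the full $\Z$ in the kernel. The only cosmetic differences are that the paper orders the two moves so the boundary-adding step is needed only when $n=0$ (so the stated Lemma \ref{lemma:killoffuh} suffices, whereas you re-derive its punctured analogue), and it compares the $\HH_2$ groups directly rather than via your transfer-plus-dimension count.
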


We will need the solution to the congruence subgroup problem for $\Sp_{2g}(\Z)$, which is
due to Mennicke \cite{MennickeCongruence} (see also \cite{BassMilnorSerre}).

\begin{theorem}[{Congruence subgroup problem for $\Sp_{2g}(\Z)$, \cite{MennickeCongruence}}]
\label{theorem:congruencesubgroupsp}
For $g \geq 2$, let $\Gamma$ be a finite index subgroup of $\Sp_{2g}(\Z)$.  Then there
is some $L \geq 2$ such that $\Sp_{2g}(\Z,L) < \Gamma$.
\end{theorem}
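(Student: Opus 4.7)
The plan is to prove the result using the classical strategy of Mennicke and Bass--Milnor--Serre, which proceeds by reducing to normal subgroups and then analyzing elementary symplectic generators.

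\medskip

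\noindent\textbf{Step 1 (Reduction to normal subgroups).} Given a finite-index subgroup $\Gamma<\Sp_{2g}(\Z)$, I would first replace it by the intersection $N$ of its (finitely many) $\Sp_{2g}(\Z)$-conjugates. This $N$ is a normal subgroup of finite index contained in $\Gamma$, and it suffices to find $L$ with $\Sp_{2g}(\Z,L)<N$. So the problem reduces to showing that every finite-index \emph{normal} subgroup $N\triangleleft\Sp_{2g}(\Z)$ contains a principal congruence subgroup.

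\medskip

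\noindent\textbf{Step 2 (Elementary symplectic generators).} The next ingredient is the classical fact (due, in various forms, to Klingen, Hua--Reiner, and others) that $\Sp_{2g}(\Z)$ is generated by the elementary symplectic matrices, i.e.\ the symplectic transvections
\[
T_{v,t} = \One + t\, v\, (Jv)^T, \qquad v\in\Z^{2g}\text{ primitive},\ t\in\Z,
\]
where $J$ is the standard symplectic form. Equivalently, the short list of elementary generators $E_{ij}(t)$ adapted to a symplectic basis suffices. All other transvections are $\Sp_{2g}(\Z)$-conjugate to one of these, which will be important in Step~4.

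\medskip

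\noindent\textbf{Step 3 (The Mennicke ideal of a generator).} For each transvection $\tau=T_{v,1}$, the set
\[
I_\tau = \{\,n\in\Z\ |\ \tau^n\in N\,\}
\]
is a subgroup of $\Z$, hence of the form $L_\tau\Z$; since $N$ has finite index, $L_\tau$ is a positive integer. The goal is to show $L_\tau$ can be taken independent of $\tau$, and that a common such $L$ suffices to force all of $\Sp_{2g}(\Z,L)$ into $N$.

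\medskip

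\noindent\textbf{Step 4 (Uniformity via Steinberg relations).} Using the commutator and conjugation relations satisfied by elementary symplectic matrices (the symplectic Steinberg relations), one shows that the $L_\tau$ for different generators divide one another. Concretely, if $\tau_1$ and $\tau_2$ are two transvections with $i(v_1,v_2)=\pm 1$, then a suitable commutator identity of the form $[\tau_1^a,\tau_2^b]=\tau_3^{ab}$ (for a third transvection $\tau_3$) shows $L_{\tau_3}\mid L_{\tau_1}L_{\tau_2}$, and conjugation invariance of $N$ together with transitivity of $\Sp_{2g}(\Z)$ on symplectic bases shows $L_\tau$ depends only on the $\Sp_{2g}(\Z)$-orbit of $\tau$---of which there is just one. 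Thus a single $L$ works: $\tau^L\in N$ for every transvection $\tau$.

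\medskip

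\noindent\textbf{Step 5 (Identifying the elementary level-$L$ subgroup with $\Sp_{2g}(\Z,L)$).} Let $E\Sp_{2g}(\Z,L)$ be the subgroup generated by $\{\tau^L : \tau\text{ a transvection}\}$. By Step~4, $E\Sp_{2g}(\Z,L)\subset N$, so it remains to show $E\Sp_{2g}(\Z,L)=\Sp_{2g}(\Z,L)$. This last equality---proven by Bass--Milnor--Serre \cite{BassMilnorSerre} and Mennicke using a delicate stable row reduction argument valid precisely because the symplectic rank $g\geq 2$---is the real heart of the theorem.

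\medskip

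\noindent\textbf{Main obstacle.} Steps 1--4 are formal manipulations once the generators and their Steinberg relations are in hand. The hard step is Step~5: proving that the elementary subgroup $E\Sp_{2g}(\Z,L)$ exhausts the full principal congruence subgroup $\Sp_{2g}(\Z,L)$. The analogous statement fails spectacularly for $\Sp_2=\SL_2$ (where the congruence subgroup property itself fails), so any proof must use $g\geq 2$ in an essential way---typically via a stable row-reduction argument exploiting the existence of a hyperbolic summand orthogonal to any given element. This is precisely where one would invoke (or reprove) the work of Bass--Milnor--Serre/Mennicke.
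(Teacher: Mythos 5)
The paper does not prove this theorem at all: it is stated with the citation \cite{MennickeCongruence} (see also \cite{BassMilnorSerre}) built into the theorem header and then used as a black box. So there is no ``paper's proof'' to compare against; any honest comparison must be against the cited literature.

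Your outline is a fair description of the Mennicke/Bass--Milnor--Serre strategy, but it is a summary of that strategy, not a proof. You acknowledge this yourself at Step~5: showing that the elementary subgroup $E\Sp_{2g}(\Z,L)$ equals the full principal congruence subgroup $\Sp_{2g}(\Z,L)$ is exactly the content of the theorem, and you defer it to the references. Everything genuinely nontrivial (the Mennicke symbol computation, stability, the careful use of $g \geq 2$ to get a hyperbolic pair orthogonal to any vector) is hidden inside that deferral. So as a ``proof'' the proposal is circular in the sense that it reduces the theorem to the theorem; as an expanded gloss on why the citation says what it says, it is fine. One small simplification worth noting: Step~4's commutator/Steinberg argument is unnecessary for the conclusion you draw there. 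Since $\Sp_{2g}(\Z)$ acts transitively on primitive vectors of $\Z^{2g}$, all transvections $T_{v,1}$ are conjugate, and normality of $N$ alone already forces the $L_\tau$ to coincide. The Steinberg relations and Mennicke symbol manipulations are genuinely needed, but in Step~5, not Step~4.
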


We will also need the following standard result, which follows easily from the Hochschild-Serre spectral
sequence and the existence of the so-called {\em transfer homomorphism} (c.f. \cite[Chapter III.9]{BrownCohomology}).

\begin{lemma}
\label{lemma:transfer}
If $G'$ is a finite-index subgroup of $G$ and $M$ is a $G$-vector space over $\Q$, then the map
$\HH_{\ast}(G';M) \rightarrow \HH_{\ast}(G;M)$ is surjective.  If $G'$ is a normal
subgroup of $G$, then $\HH_{\ast}(G;M) = (\HH_{\ast}(G';M))_G$, where the action of $G$ on $\HH_{\ast}(G';M)$
comes from the conjugation action of $G$ on $G'$.
\end{lemma}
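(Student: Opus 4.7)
The plan is to derive both assertions from standard tools: the transfer homomorphism handles the surjectivity, while the Hochschild--Serre spectral sequence combined with the vanishing of higher rational homology of finite groups handles the normal case.

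For the surjectivity claim, I would invoke the transfer homomorphism $\mathrm{tr}: \HH_\ast(G;M) \to \HH_\ast(G';M)$, whose defining property is that $\iota_\ast \circ \mathrm{tr}$ equals multiplication by $[G:G']$ on $\HH_\ast(G;M)$, where $\iota_\ast: \HH_\ast(G';M) \to \HH_\ast(G;M)$ is the natural map induced by inclusion. Since $M$, and hence $\HH_\ast(G;M)$, is a $\Q$-vector space, multiplication by the nonzero integer $[G:G']$ is invertible, so $\iota_\ast$ admits a right inverse, namely $\mathrm{tr}/[G:G']$, and in particular is surjective.

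For the second claim, suppose now $G'$ is normal in $G$, set $Q = G/G'$, and consider the Hochschild--Serre spectral sequence associated to
$$1 \longrightarrow G' \longrightarrow G \longrightarrow Q \longrightarrow 1$$
with coefficients in $M$. Its $E^2$-page is $E^2_{p,q} = \HH_p(Q; \HH_q(G';M))$, converging to $\HH_{p+q}(G;M)$. Because $Q$ is finite and the coefficient modules $\HH_q(G';M)$ are $\Q$-vector spaces, Maschke's theorem implies that every $\Q[Q]$-module is projective, and consequently $\HH_p(Q;N) = 0$ for all $p \geq 1$ and every such $N$. The spectral sequence therefore collapses onto the column $p=0$, yielding a canonical isomorphism
$$\HH_n(G;M) \cong E^2_{0,n} = (\HH_n(G';M))_Q.$$
The $Q$-action appearing here on $\HH_q(G';M)$ is, by the standard construction of Hochschild--Serre, induced by the conjugation action of $G$ on $G'$ (inner automorphisms of $G'$ act trivially on its own homology, so the action descends from $G$ to $Q$), which matches the action described in the statement.

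There is no substantive obstacle here: the argument is a direct combination of two textbook tools. The only real care required is bookkeeping, namely checking that $[G:G']$ is invertible (which is where rational coefficients are essential) and identifying the $Q$-action produced by the spectral sequence with the conjugation action advertised in the lemma.
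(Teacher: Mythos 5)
Your proof is correct and follows essentially the same route the paper has in mind: the paper states the lemma ``follows easily from the Hochschild--Serre spectral sequence and the existence of the so-called transfer homomorphism,'' which is exactly the combination you use (transfer for surjectivity over $\Q$, collapse of the Hochschild--Serre spectral sequence via vanishing of higher rational homology of the finite quotient for the coinvariants identification). Your remark that inner automorphisms act trivially, so the $G$-action on $\HH_\ast(G';M)$ factors through $Q=G/G'$ and $(\HH_\ast(G';M))_Q = (\HH_\ast(G';M))_G$, is precisely the bookkeeping needed to match the statement.
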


\begin{proof}[{Proof of Theorem \ref{theorem:mainh2} assuming Theorem \ref{theorem:mainh2levelbdry}}]
The group $\Gamma$ fits into an exact sequence
$$1 \longrightarrow \Torelli_{g,b}^n \longrightarrow \Gamma \longrightarrow \Gamma' \longrightarrow 1,$$
where $\Gamma'$ is a finite index subgroup of $\Sp_{2g}(\Z)$.  By Theorem \ref{theorem:congruencesubgroupsp},
there is some $L \geq 2$ such that $\Sp_{2g}(\Z,L) < \Gamma'$.  Multiplying $L$ by $2$ if necessary, we
may assume that $L \geq 3$.  Pulling $\Sp_{2g}(\Z,L)$ back to $\Gamma$, we see
that $\Gamma$ contains $\Mod_{g,b}^n(L)$ as a subgroup of finite index.  
We have a sequence of maps
$$\HH_2(\Mod_{g,b}^n(L);\Q) \stackrel{i_1}{\longrightarrow} \HH_2(\Gamma;\Q) \stackrel{i_2}{\longrightarrow} \HH_2(\Mod_{g,b}^n;\Q).$$
Lemma \ref{lemma:transfer} says that $i_1$ and $i_2$ are surjective.  Also, Theorem \ref{theorem:mainh2levelbdry}
says that $\HH_2(\Mod_{g,b}^n(L);\Q) \cong \HH_2(\Mod_{g,b}^n;\Q)$, so we conclude that $i_2$ must be an
an isomorphism, as desired.
\end{proof}

\subsection{Eliminating the boundary components and punctures}

We now eliminate the boundary components and punctures.  
We will need the following result, which is an immediate consequence of the Gysin sequence.

\begin{lemma}
\label{lemma:eliminatepuncture}
Let $G$ be a group that fits into a central extension
$$1 \longrightarrow \Z \longrightarrow G \longrightarrow \Gamma \longrightarrow 1.$$
If $\HH_1(G;\Q) = 0$, then there is a natural short exact sequence
$$0 \longrightarrow \HH_2(G;\Q) \longrightarrow \HH_2(\Gamma;\Q) \longrightarrow \Q \longrightarrow 0.$$
\end{lemma}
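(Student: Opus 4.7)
The plan is to read everything off directly from the Gysin sequence associated to the central extension $1 \to \Z \to G \to \Gamma \to 1$, since the hypothesis was tailor-made to kill the terms that would otherwise obstruct the desired short exact sequence. With the notational swap that $G$ now plays the role of the total group and $\Gamma$ the role of the quotient, the Gysin sequence recalled in \S\ref{section:preliminaries} specializes to
$$\cdots \longrightarrow \HH_{n-1}(\Gamma;\Q) \longrightarrow \HH_n(G;\Q) \longrightarrow \HH_n(\Gamma;\Q) \longrightarrow \HH_{n-2}(\Gamma;\Q) \longrightarrow \HH_{n-1}(G;\Q) \longrightarrow \cdots.$$

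First I would use the hypothesis $\HH_1(G;\Q)=0$ together with the low-dimensional portion of this sequence to deduce that $\HH_1(\Gamma;\Q)=0$ as well. Indeed, the tail of the Gysin sequence at $n=1$ takes the form $\HH_1(G;\Q) \to \HH_1(\Gamma;\Q) \to \HH_{-1}(\Gamma;\Q)$, where the rightmost term is zero by convention, so the vanishing of the leftmost term forces the vanishing of the middle term.

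Next I would extract the desired sequence by examining the portion of the Gysin sequence at $n=2$, namely
$$\HH_1(\Gamma;\Q) \longrightarrow \HH_2(G;\Q) \longrightarrow \HH_2(\Gamma;\Q) \longrightarrow \HH_0(\Gamma;\Q) \longrightarrow \HH_1(G;\Q).$$
The first term vanishes by the previous step, the last term vanishes by hypothesis, and $\HH_0(\Gamma;\Q)\cong\Q$. The middle four arrows therefore yield exactly the short exact sequence
$$0 \longrightarrow \HH_2(G;\Q) \longrightarrow \HH_2(\Gamma;\Q) \longrightarrow \Q \longrightarrow 0$$
asserted in the lemma. Naturality is inherited from the naturality of the Hochschild--Serre spectral sequence (and hence of the Gysin sequence) in morphisms of central extensions, so there is no serious obstacle here; the entire proof is just the bookkeeping carried out above.
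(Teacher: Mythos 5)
Your proof is correct and takes exactly the approach the paper intends: the paper states that Lemma \ref{lemma:eliminatepuncture} is ``an immediate consequence of the Gysin sequence,'' and you have simply spelled out that bookkeeping, including the preliminary observation from the $n=1$ tail that $\HH_1(\Gamma;\Q)=0$ (which is what ensures injectivity of $\HH_2(G;\Q)\to\HH_2(\Gamma;\Q)$ at the $n=2$ stage).
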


\noindent
We will also need the following lemma.

\begin{lemma}
\label{lemma:killoffuh}
For $L \geq 2$, $g \geq 2$ and $b \geq 0$, we have $(\HH_2(\pi_1(U\Sigma_{g,b});\Q))_{\Mod_{g,b}(L)} = 0$.
\end{lemma}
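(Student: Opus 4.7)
The plan is to identify $\HH_2(\pi_1(U\Sigma_{g,b});\Q)$ with $\HH_1(\Sigma_{g,b};\Q)$ as a $\Mod_{g,b}(L)$-module and then apply Lemma \ref{lemma:huhcoinv} to conclude that the coinvariants vanish. This identification will come from the Gysin sequence for the central extension
$$1 \longrightarrow \Z \longrightarrow \pi_1(U\Sigma_{g,b}) \longrightarrow \pi_1(\Sigma_{g,b}) \longrightarrow 1$$
arising from the $S^1$-bundle $U\Sigma_{g,b} \to \Sigma_{g,b}$. Since $\Mod_{g,b}(L)$ preserves this extension, naturality of the Gysin sequence will automatically make the identification equivariant.

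Explicitly, I would write down the relevant segment of the Gysin sequence from \S\ref{section:preliminaries}, namely
$$\HH_3(\pi_1(\Sigma_{g,b});\Q) \longrightarrow \HH_1(\pi_1(\Sigma_{g,b});\Q) \longrightarrow \HH_2(\pi_1(U\Sigma_{g,b});\Q) \longrightarrow \HH_2(\pi_1(\Sigma_{g,b});\Q) \stackrel{e}{\longrightarrow} \HH_0(\pi_1(\Sigma_{g,b});\Q),$$
where $e$ is cap product with the Euler class of the central extension; this Euler class coincides with the Euler class of the unit tangent bundle $U\Sigma_{g,b}$.

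Then I would split into two cases. For $b \geq 1$ the group $\pi_1(\Sigma_{g,b})$ is free, so $\HH_n(\pi_1(\Sigma_{g,b});\Q) = 0$ for $n \geq 2$, and the displayed segment collapses to an isomorphism $\HH_2(\pi_1(U\Sigma_{g,b});\Q) \cong \HH_1(\Sigma_{g,b};\Q)$. For $b = 0$, on the other hand, $\HH_3(\pi_1(\Sigma_g);\Q) = 0$ because $\Sigma_g$ is aspherical and $2$-dimensional, while the Euler number of $U\Sigma_g$ equals $2-2g$, which is nonzero in $\Q$ for $g \geq 2$; hence $e$ is an isomorphism of one-dimensional $\Q$-vector spaces, and the sequence again degenerates to $\HH_2(\pi_1(U\Sigma_g);\Q) \cong \HH_1(\Sigma_g;\Q)$.

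By naturality of the Gysin sequence this identification is $\Mod_{g,b}(L)$-equivariant, so Lemma \ref{lemma:huhcoinv} gives
$$(\HH_2(\pi_1(U\Sigma_{g,b});\Q))_{\Mod_{g,b}(L)} \cong (\HH_1(\Sigma_{g,b};\Q))_{\Mod_{g,b}(L)} = 0.$$
There is no serious obstacle; the only point requiring care is that the two cases $b = 0$ and $b \geq 1$ rely on different reasons for the Gysin sequence to collapse, the nontriviality of the Euler class in the former and the freeness of $\pi_1(\Sigma_{g,b})$ in the latter.
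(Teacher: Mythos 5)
Your proof is correct, and it takes a route through the Gysin sequence that is genuinely different from the paper's argument. The paper also begins by reducing to the isomorphism $\HH_2(\pi_1(U\Sigma_{g,b});\Q) \cong \HH_1(\Sigma_{g,b};\Q)$ and then invoking Lemma \ref{lemma:huhcoinv}, but it establishes the isomorphism by different means in the two cases: for $b=0$ it treats $U\Sigma_g$ as a closed aspherical $3$-manifold and chains together asphericity, Poincar\'e duality on $U\Sigma_g$, Lemma \ref{lemma:h1uhclosed}, and Poincar\'e duality on $\Sigma_g$; for $b \geq 1$ it writes $U\Sigma_{g,b} \cong \Sigma_{g,b} \times S^1$ and applies the K\"unneth formula together with freeness of $\pi_1(\Sigma_{g,b})$. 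You instead apply the group-cohomological Gysin sequence (already introduced in \S\ref{section:preliminaries}) to the central extension $1 \to \Z \to \pi_1(U\Sigma_{g,b}) \to \pi_1(\Sigma_{g,b}) \to 1$ in both cases, with the collapse supplied by the nonvanishing of the Euler number $2-2g$ over $\Q$ when $b=0$ and by freeness of $\pi_1(\Sigma_{g,b})$ when $b \geq 1$. Your approach has the advantage of treating the two cases within a single framework and making the $\Mod_{g,b}(L)$-equivariance of the identification immediate from naturality of the Gysin sequence (the paper's chain of Poincar\'e duality isomorphisms is also equivariant, but this takes a moment more to see). The one place requiring slight care is identifying the $d^2$-differential with cap product against the Euler class of $U\Sigma_{g,b}$, which you state correctly.
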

\begin{proof}
By Lemma \ref{lemma:huhcoinv}, it is enough to show that we have an isomorphism
$$\HH_2(\pi_1(U\Sigma_{g,b});\Q) \cong \HH_1(\Sigma_{g,b};\Q)$$
of $\Mod_{g,b}(L)$-modules.  There are two cases.  In the first case, $b=0$, so 
$U\Sigma_{g,b}$ is a closed aspherical 3-manifold.  We then have
$$\HH_2(\pi_1(U\Sigma_{g,b});\Q) \cong \HH_2(U\Sigma_{g,b};\Q) \cong \HH^1(U\Sigma_{g,b};\Q) \cong \HH^1(\Sigma_{g,b};\Q) \cong \HH_1(\Sigma_{g,b};\Q).$$
The first isomorphism here follows from the fact that $U\Sigma_{g,b}$ is aspherical, the 
third isomorphism follows from Lemma \ref{lemma:h1uhclosed}, and the remaining isomorphisms are applications
of Poincar\'{e} duality.
In the second case, $b > 0$, so we have $U\Sigma_{g,b} \cong \Sigma_{g,b} \times S^1$, and the desired
result follows from the K\"{u}nneth formula and the fact that $\pi_1(\Sigma_{g,b})$ is free.
\end{proof}

\begin{proof}[{Proof of Theorem \ref{theorem:mainh2levelbdry} assuming Theorem \ref{theorem:mainh2level}}]
Denote the truth of Theorem \ref{theorem:mainh2levelbdry} for a surface $\Sigma_{g,b}^n$ by $F(g,b,n)$.  Hence
Theorem \ref{theorem:mainh2level} is asserting $F(g,0,0)$ for $g \geq 5$.  Also, Theorem
\ref{theorem:hainh1} says that we can apply Lemma \ref{lemma:eliminatepuncture} to
the central extension
$$1 \longrightarrow \Z \longrightarrow \Mod_{g,b+1}^n(L) \longrightarrow \Mod_{g,b}^{n+1}(L) \longrightarrow 1$$
that is induced by gluing a punctured disc to a boundary component of $\Sigma_{g,b+1}^n$.  This show that
$F(g,b+1,n)$ implies $F(g,b,n+1)$.  To prove the theorem, therefore, we must only prove that $F(g,b,0)$ implies
$F(g,b+1,0)$ for $g \geq 5$.

Consider the exact sequence
\begin{equation}
\label{eqn:eliminatebdryseq}
1 \longrightarrow \pi_1(U\Sigma_{g,b}) \longrightarrow \Mod_{g,b+1}(L) \longrightarrow \Mod_{g,b}(L) \longrightarrow 1
\end{equation}
given by Theorem \ref{corollary:birmanexactsequencel1}.  By assumption we have $\HH_2(\Mod_{g,b}(L);\Q) \cong \Q$.
Using Lemmas \ref{lemma:huhcoinv}, \ref{lemma:killoffuh} and \ref{lemma:h1modluh}, 
the $E^2$ page of the Hochschild-Serre spectral sequence associated
to \eqref{eqn:eliminatebdryseq} is of the form
\begin{center}
\begin{tabular}{|c@{\hspace{0.2 in}}c@{\hspace{0.2 in}}c}
$0$    &        &                           \\
$0   $ & $0$    &                           \\
$\ast$ & $\ast$ & $\HH_2(\Mod_{g,b}(L);\Q)$ \\
\cline{1-3}
\end{tabular}
\end{center}
The result follows.
\end{proof}

\section{The stability trick and the proof of the main theorem}
\label{section:stabilitytrick}
The key to our proof of Theorem \ref{theorem:mainh2level} will be the following lemma, which
says that if a finite-index normal subgroup of $\Mod_{g}$ satisfies a weak form of rational homological
stability (for a fixed homology group), then that homology group must be identical to that of $\Mod_{g}$.
If $\Gamma$ is a subgroup of $\Mod_g$ and $\gamma$ is a simple closed curve on $\Sigma_g$, then denote
by $\Gamma_{\gamma}$ the subgroup of $\Gamma$ stabilizing the isotopy class of $\gamma$.

\begin{lemma}[{Stability trick}]
\label{lemma:stabilitytrick}
For $g \geq 1$, let $\Gamma$ be a finite-index normal subgroup of $\Mod_{g}$.  Fix some integer $k$, and assume that for any
simple closed nonseparating curve $\gamma$, the map $\HH_k(\Gamma_{\gamma};\Q) \rightarrow \HH_k(\Gamma;\Q)$ is surjective.
Then $\HH_k(\Gamma;\Q) \cong \HH_k(\Mod_{g};\Q)$.
\end{lemma}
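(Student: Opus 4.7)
The plan is to show that $\Mod_g$ acts trivially on $H_k(\Gamma;\Q)$ by conjugation, and then invoke the transfer lemma. First, I would apply Lemma \ref{lemma:transfer}: since $\Gamma$ is normal of finite index in $\Mod_g$, the natural surjection $H_k(\Gamma;\Q) \twoheadrightarrow H_k(\Mod_g;\Q)$ identifies the target with the $\Mod_g$-coinvariants $H_k(\Gamma;\Q)_{\Mod_g}$. So the lemma will follow once I show that $\Mod_g$ acts trivially on $H_k(\Gamma;\Q)$, because then the coinvariants quotient map is the identity.

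To prove triviality of the action, I would use that $\Mod_g$ is generated by Dehn twists $T_\gamma$ about nonseparating simple closed curves $\gamma$ (a classical theorem of Dehn and Lickorish). Thus it is enough to show that each such $T_\gamma$ acts trivially on $H_k(\Gamma;\Q)$. The key observation is that $T_\gamma$ lies in the center of the stabilizer $(\Mod_g)_\gamma$: for any $\phi \in (\Mod_g)_\gamma$ we have $\phi T_\gamma \phi^{-1} = T_{\phi(\gamma)} = T_\gamma$, since a Dehn twist depends only on the unoriented isotopy class of its defining curve together with the orientation of the surface. Since $\Gamma_\gamma = \Gamma \cap (\Mod_g)_\gamma \subset (\Mod_g)_\gamma$, the element $T_\gamma$ commutes with every element of $\Gamma_\gamma$, so conjugation by $T_\gamma$ is the identity automorphism of $\Gamma_\gamma$. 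In particular, $T_\gamma$ acts trivially on $H_k(\Gamma_\gamma;\Q)$.

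Finally, I would combine this with the hypothesis. The inclusion $\Gamma_\gamma \hookrightarrow \Gamma$ is equivariant for conjugation by $T_\gamma$ (because $T_\gamma$ preserves $\Gamma_\gamma$ and $\Gamma$, the latter by normality), hence the induced map $H_k(\Gamma_\gamma;\Q) \to H_k(\Gamma;\Q)$ is $T_\gamma$-equivariant. The hypothesis of the lemma says that this map is surjective. Since $T_\gamma$ acts trivially on the source, it must act trivially on the image, which is all of $H_k(\Gamma;\Q)$. As the $T_\gamma$ generate $\Mod_g$, the entire conjugation action of $\Mod_g$ on $H_k(\Gamma;\Q)$ is trivial, and we are done. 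There is no real obstacle in this argument; the whole proof rests on the essentially formal observation that central elements of the stabilizer transport trivial action through an equivariant surjection, and the only nontrivial input is the classical generation of $\Mod_g$ by nonseparating Dehn twists.
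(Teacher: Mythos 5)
Your proof is correct and follows essentially the same route as the paper's: apply the transfer lemma to reduce to showing that conjugation by $\Mod_g$ acts trivially on $\HH_k(\Gamma;\Q)$, reduce to nonseparating Dehn twists using Dehn--Lickorish, observe that $T_\gamma$ commutes with all of $\Gamma_\gamma$ and hence acts trivially on $\HH_k(\Gamma_\gamma;\Q)$, and propagate triviality through the $T_\gamma$-equivariant surjection $\HH_k(\Gamma_\gamma;\Q)\to\HH_k(\Gamma;\Q)$. Your write-up is a bit more explicit than the paper about the equivariance of the map and about why $T_\gamma$ centralizes $\Gamma_\gamma$, but the argument is identical.
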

\begin{proof}
By Lemma \ref{lemma:transfer}, it is enough to show that the conjugation action
of $\Mod_{g}$ on $\Gamma$ induces the trivial action on $\HH_k(\Gamma;\Q)$.  To do this, it is sufficient to
check that a Dehn twist $T_{\gamma}$ about a nonseparating curve $\gamma$ on $\Sigma_{g}$
acts trivially on $\HH_k(\Gamma;\Q)$.  Since $T_{\gamma}$ is central in $\Gamma_{\gamma}$, it acts trivially
on $\HH_k(\Gamma_{\gamma};\Q)$, so by assumption it also acts trivially on $\HH_k(\Gamma;\Q)$, and we are done.
\end{proof}

We now prove Theorem \ref{theorem:mainh2level}, making use of two results whose proofs are postponed.
Recall that $\CNosep_{g}$ is the simplicial complex whose $(n-1)$-simplices are sets $\{\gamma_1,\ldots,\gamma_n\}$ of isotopy
class of nonseparating simple closed curves that can be realized disjointly with 
$\Sigma_{g} \setminus (\gamma_1 \cup \cdots \cup \gamma_n)$ connected.

\begin{proof}[{Proof of Theorem \ref{theorem:mainh2level}}]
Fix some $g \geq 5$ and $L \geq 3$.  We must check the conditions of Lemma \ref{lemma:stabilitytrick} for 
$\Gamma = \Mod_{g}(L)$ and $k=2$.
Recalling the notation $\Mod_{g,\gamma}(L)$ from Definition \ref{definition:curvestab}, 
for any simple closed nonseparating curve $\gamma$ we have a factorization
$$\HH_2(\Mod_{g,\gamma}(L);\Q) \longrightarrow \HH_2((\Mod_g(L))_{\gamma};\Q) \longrightarrow \HH_2(\Mod_g(L);\Q).$$
It is thus enough to prove that the map $\HH_2(\Mod_{g,\gamma}(L);\Q) \rightarrow \HH_2(\Mod_g(L);\Q)$ is surjective.
The first step is Proposition \ref{proposition:decompthm} below, which says that
$\HH_2(\Mod_{g}(L);\Q)$ is ``carried'' on curve stabilizers.  This proposition
will be proven in \S \ref{section:decompthm}.

\begin{proposition}[{Decomposition theorem}]
\label{proposition:decompthm}
For $g \geq 5$ and $L \geq 3$ the natural map
$$\bigoplus_{\gamma \in (\CNosep_{g})^{(0)}} \HH_2(\Mod_{g,\gamma}(L);\Q) \longrightarrow \HH_2(\Mod_{g}(L);\Q)$$
is surjective.
\end{proposition}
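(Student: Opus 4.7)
The plan is to let $\Mod_g(L)$ act on Harer's nonseparating curve complex $\CNosep_g$ and run the equivariant homology spectral sequence. Harer proved that $\CNosep_g$ is highly connected (certainly $2$-connected for $g \geq 5$), and the hypothesis $L \geq 3$ ensures that no element of $\Mod_g(L)$ reverses the orientation of a nonseparating simple closed curve; in particular the action of $\Mod_g(L)$ on $\CNosep_g$ does not invert any simplex. Under these hypotheses one obtains the standard spectral sequence
\begin{equation*}
E^1_{p,q} = \bigoplus_{\sigma \in \Omega_p} \HH_q\bigl((\Mod_g(L))_\sigma;\Q\bigr) \;\Longrightarrow\; \HH_{p+q}(\Mod_g(L);\Q),
\end{equation*}
where $\Omega_p$ is a set of representatives of the $\Mod_g(L)$-orbits of $p$-simplices of $\CNosep_g$, converging to the group homology of $\Mod_g(L)$ in total degrees $\leq 2$.

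The strategy is to show that the filtration on $\HH_2(\Mod_g(L);\Q)$ induced by this spectral sequence is concentrated in filtration degree $0$, i.e.\ that $E^\infty_{1,1}=E^\infty_{2,0}=0$. For $E^\infty_{1,1}$, each edge stabilizer $(\Mod_g(L))_{\{\gamma_1,\gamma_2\}}$ fits into a central extension of (a quotient of) $\Mod_{g,\gamma_1,\gamma_2}(L)$ by a finitely generated abelian group generated by the two boundary-twist powers, in the spirit of Theorem \ref{theorem:birmanexactsequencel2}. The Hochschild-Serre five-term exact sequence together with the rational $\HH_1$-vanishing of Lemma \ref{lemma:h1modlstab} gives $\HH_1((\Mod_g(L))_{\{\gamma_1,\gamma_2\}};\Q)=0$, so $E^1_{1,1}=0$. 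The same argument applied to vertex stabilizers gives $E^1_{0,1}=0$, which in turn forces all differentials out of $E^r_{2,0}$ for $r \geq 2$ to land in zero, so $E^\infty_{2,0}=E^2_{2,0}$.

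The main obstacle I anticipate is the vanishing of $E^2_{2,0}$, which is the second rational homology of the quotient simplicial set $\CNosep_g/\Mod_g(L)$. Since $\Mod_g(L)$ does not act transitively on the vertices of $\CNosep_g$, this quotient is nontrivial and requires an explicit combinatorial analysis: one parametrizes $\Mod_g(L)$-orbits of $k$-simplices ($k \leq 2$) by their tuples of homology classes in $\HH_1(\Sigma_g;\Z/L)$ and verifies that the resulting rational chain complex is acyclic in degree $2$. This combinatorial verification is where I expect the bulk of the technical work to lie.

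Once the surjection $\bigoplus_\gamma \HH_2((\Mod_g(L))_\gamma;\Q)\to\HH_2(\Mod_g(L);\Q)$ (summed over orbit representatives) is established, the proposition follows by rewriting it in terms of $\Mod_{g,\gamma}(L)$. By Theorem \ref{theorem:birmanexactsequencel2} and the discussion of Definition \ref{definition:curvestab}, $\Mod_{g,\gamma}(L)$ surjects onto $(\Mod_g(L))_\gamma$ with central kernel $\Z$ generated by a difference of boundary twists; the Gysin sequence combined with Lemma \ref{lemma:h1modlstab} identifies the image of $\HH_2(\Mod_{g,\gamma}(L);\Q)\to\HH_2((\Mod_g(L))_\gamma;\Q)$, and after composing into $\HH_2(\Mod_g(L);\Q)$ one recovers the full image. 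Summing over all vertices of $\CNosep_g$ (each orbit contributing many copies) then yields the proposition.
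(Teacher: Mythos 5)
Your overall strategy---run the equivariant homology spectral sequence for the $\Mod_g(L)$-action on $\CNosep_g$, kill $E^\infty_{1,1}$ via vanishing of $\HH_1$ of edge stabilizers, kill $E^\infty_{2,0}$ via connectivity of the quotient, then convert from $(\Mod_g(L))_\gamma$ to $\Mod_{g,\gamma}(L)$---is the same as the paper's (the paper packages the spectral sequence as Theorem \ref{theorem:equivarianthomologythm}). However, you leave two gaps, and one of them is a genuine missing idea rather than a deferred computation.

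The first gap you flag yourself: you need $\CNosep_g/\Mod_g(L)$ to be $2$-connected, but you do not prove it and only say that you expect the bulk of the technical work to lie there. The paper devotes \S\ref{section:cnosepmodlcon} to this: it identifies the quotient with a combinatorially-defined complex $\Bases(g,L)$ of lax isotropic bases in $\HH_1(\Sigma_g;\Z/L)$ and proves $(g-2)$-connectivity by a rank-reduction argument. Acknowledging a gap does not close it, but at least you have correctly located the difficulty.

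The second gap is more serious because your proposal glosses over it as if it were routine. After establishing
$$\bigoplus_\gamma \HH_2\bigl((\Mod_g(L))_\gamma;\Q\bigr) \twoheadrightarrow \HH_2(\Mod_g(L);\Q),$$
you claim that ``after composing into $\HH_2(\Mod_g(L);\Q)$ one recovers the full image.'' But Lemma \ref{lemma:eliminatepuncture} applied to the central extension $1 \to \Z \to \Mod_{g,\gamma}(L) \to (\Mod_g(L))_\gamma \to 1$ gives a short exact sequence
$$0 \longrightarrow \HH_2(\Mod_{g,\gamma}(L);\Q) \longrightarrow \HH_2\bigl((\Mod_g(L))_\gamma;\Q\bigr) \longrightarrow \Q \longrightarrow 0,$$
so the map $\HH_2(\Mod_{g,\gamma}(L);\Q) \to \HH_2((\Mod_g(L))_\gamma;\Q)$ is injective with a one-dimensional cokernel, \emph{not} surjective. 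There is therefore no a priori reason that the images of $\HH_2(\Mod_{g,\gamma}(L);\Q)$ and $\HH_2((\Mod_g(L))_\gamma;\Q)$ in $\HH_2(\Mod_g(L);\Q)$ coincide, and nothing in your argument forces the extra $\Q$ to be in the image of the left-hand side. The paper handles this by a genuine trick (\S\ref{section:decompthmstrong}): embed a subsurface $\Sigma_{4,1} \supset \delta$, use naturality of the Gysin short exact sequence to show a complement to $\HH_2(\Mod_{g,\delta}(L);\Q)$ inside $\HH_2((\Mod_g(L))_\delta;\Q)$ already lies in the image of $\HH_2((\Mod_{4,1}(L))_\delta;\Q)$, and then observe that $(\Mod_{4,1}(L))_\delta$ embeds in $\Mod_{g,\delta'}(L)$ for any nonseparating $\delta'$ disjoint from $\Sigma_{4,1}$. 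That extra curve $\delta'$ is what captures the cokernel, and without some version of this argument your final step does not go through.
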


The second step is Proposition \ref{proposition:weakstability} below, which will be proven
in \S \ref{section:weakstability}.

\Figure{figure:weakstability}{WeakStability}{a. $\Sigma_{g} \setminus S$ is a one-holed torus containing $\gamma$. \CaptionSpace
b. The same surface $S$ works for both $\gamma_1$ and $\gamma_2$}

\begin{proposition}[{Weak stability theorem}]
\label{proposition:weakstability}
Fix $g \geq 5$ and $L \geq 2$.  Let $\gamma$ be a nonseparating simple closed curve on $\Sigma_{g}$. 
Also, let $S$ be a subsurface
of $\Sigma_{g}$ such that $S \cong \Sigma_{g-1,1}$ and such that $S$ is embedded in $\Sigma_{g}$
as depicted in Figure \ref{figure:weakstability}.a (in particular, $\gamma \subset \Sigma_{g} \setminus S$).
Then the natural map $\HH_2(\Mod(S,L);\Q) \rightarrow \HH_2(\Mod_{g,\gamma}(L);\Q)$ is an
isomorphism.
\end{proposition}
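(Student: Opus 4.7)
The map $\Mod(S,L) \to \Mod_{g,\gamma}(L)$ is precisely the section of the split Birman-type exact sequence
$$1 \longrightarrow \overline{K}_{g-1,1} \longrightarrow \Mod_{g,\gamma}(L) \longrightarrow \Mod_{g-1,1}(L) \longrightarrow 1$$
from Theorem \ref{theorem:birmanexactsequencel2}: since $\Sigma_g \setminus S$ is a one-holed torus containing the nonseparating curve $\gamma$, cutting along $\gamma$ makes $\Sigma_{g,\gamma} \setminus S$ a pair of pants containing a boundary component $\beta$, which is exactly the embedding condition for the splitting. The composition $\Mod(S,L) \to \Mod_{g,\gamma}(L) \to \Mod_{g-1,1}(L)$ is then the tautological isomorphism, so the map in the proposition is split injective on $H_2(-;\Q)$ and the only content is surjectivity. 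I would attack this through the Hochschild--Serre spectral sequence of the split extension: the splitting forces $E^\infty_{2,0} = E^2_{2,0} = H_2(\Mod_{g-1,1}(L);\Q)$ and kills every differential leaving the bottom row (so in particular $E^\infty_{1,1} = E^2_{1,1}$), whence surjectivity is equivalent to $E^\infty_{0,2} = 0$ and $E^2_{1,1} = 0$.

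To analyze the coefficient modules, Theorem \ref{theorem:birmanexactsequencel2} exhibits $\overline{K}_{g-1,1}$ as a split central extension of $K_{g-1,1}$ by $\Z = \langle T_\beta^L \rangle$, so $\overline{K}_{g-1,1} \cong \Z \times K_{g-1,1}$. The kernel $K_{g-1,1}$ is a finite-index subgroup of the free group $\pi_1(\Sigma_{g-1,1})$, hence free, with $H_1(K_{g-1,1};\Q) \cong H_1(\widetilde{\Sigma}_{g-1,1};\Q)$ for the abelian cover $\widetilde{\Sigma}_{g-1,1} \to \Sigma_{g-1,1}$ of deck group $(\Z/L)^{2(g-1)}$ and $H_2(K_{g-1,1};\Q) = 0$. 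The Lyndon--Hochschild--Serre spectral sequence of the central extension therefore collapses and gives $\Mod_{g-1,1}(L)$-equivariantly $H_2(\overline{K}_{g-1,1};\Q) \cong H_1(\widetilde{\Sigma}_{g-1,1};\Q)$ together with a short exact sequence of $\Mod_{g-1,1}(L)$-modules
$$0 \longrightarrow \Q \longrightarrow H_1(\overline{K}_{g-1,1};\Q) \longrightarrow H_1(\widetilde{\Sigma}_{g-1,1};\Q) \longrightarrow 0,$$
in which the $\Q$ summand carries the trivial action (it is generated by the fiber class $T_\beta^L$, which is fixed under the conjugation action of the section).

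For $E^\infty_{0,2}$ the argument is clean: Lemma \ref{lemma:h1modlstab} gives $H_1(\Mod_{g,\gamma}(L);\Q) = 0$, and the five-term exact sequence of the Birman extension (together with the splitting, which makes $H_2(\Mod_{g,\gamma}(L);\Q) \twoheadrightarrow H_2(\Mod_{g-1,1}(L);\Q)$ surjective) forces $(H_1(\overline{K}_{g-1,1};\Q))_{\Mod_{g-1,1}(L)} = 0$; right exactness of coinvariants applied to the displayed short exact sequence then yields $E^2_{0,2} = (H_1(\widetilde{\Sigma}_{g-1,1};\Q))_{\Mod_{g-1,1}(L)} = 0$. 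For $E^2_{1,1}$, I would feed the same short exact sequence into the long exact sequence in $\Mod_{g-1,1}(L)$-homology: Theorem \ref{theorem:hainh1} kills $H_1(\Mod_{g-1,1}(L);\Q)$ and reduces the computation to controlling $H_1(\Mod_{g-1,1}(L); H_1(\widetilde{\Sigma}_{g-1,1};\Q))$, which is exactly the setting of the author's vanishing theorem for the twisted first homology of level-$L$ mapping class groups with coefficients in the homology of abelian covers (Lemma \ref{lemma:vanishing}, cf.\ \cite{PutmanVanishing}). This last step is the main obstacle: the trivial ``fiber'' contribution from $T_\beta^L$ and the cover contribution from $H_1(\widetilde{\Sigma}_{g-1,1};\Q)$ must be reconciled inside the long exact sequence, and it is here that the hypothesis $g \geq 5$ (giving $g-1 \geq 4$) is used. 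Once $E^\infty_{0,2}$ and $E^2_{1,1}$ both vanish, the filtration on $H_2(\Mod_{g,\gamma}(L);\Q)$ collapses onto the image of the section, giving the asserted isomorphism.
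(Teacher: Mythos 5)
Your proposal follows the same architecture as the paper's proof: recognize the map in question as the splitting of the Birman-type exact sequence from Theorem~\ref{theorem:birmanexactsequencel2}, reduce to surjectivity, and kill the two obstruction terms $E^\infty_{0,2}$ and $E^2_{1,1}$ in the Hochschild--Serre spectral sequence of the split extension. (The paper packages this via Lemma~\ref{lemma:h2decomplemma} instead of the spectral sequence, but the content is identical: $E^2_{1,1} = \HH_1(\Mod_{g-1,1}(L);\HH_1(\overline{K}_{g-1,1};\Q))$ is the ``middle'' summand, and $E^\infty_{0,2}$ is the paper's $X$.)

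Where you deviate, you deviate in a worthwhile way. For $E^\infty_{0,2}$, the paper uses a Gysin diagram chase comparing the central extension $1\to\Z\to\overline{K}_{g-1,1}\to K_{g-1,1}\to 1$ with $1\to\Z\to\Mod_{g,\gamma}(L)\to\Mod_{g,\gamma}(L)/\Z\to 1$. Your route via coinvariants is actually cleaner: the splitting makes $\HH_2(\Mod_{g,\gamma}(L);\Q)\to\HH_2(\Mod_{g-1,1}(L);\Q)$ surjective, the five-term sequence then forces $(\HH_1(\overline{K}_{g-1,1};\Q))_{\Mod_{g-1,1}(L)} = 0$ since $\HH_1(\Mod_{g,\gamma}(L);\Q)=0$ by Lemma~\ref{lemma:h1modlstab}, and right exactness of coinvariants transfers this to $(\HH_1(K_{g-1,1};\Q))_{\Mod_{g-1,1}(L)}=0$. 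Using the Gysin identification $\HH_2(\overline{K}_{g-1,1};\Q)\cong\HH_1(K_{g-1,1};\Q)$ (equivariant because $K_{g-1,1}$ is free, so the Gysin map is an isomorphism), this even gives the stronger $E^2_{0,2}=0$.

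For $E^2_{1,1}$ you identify the correct ingredients but stop short of the actual computation. The missing step is a dimension count rather than a ``reconciliation'': feed $0\to\Q\to\HH_1(\overline{K}_{g-1,1};\Q)\to\HH_1(K_{g-1,1};\Q)\to 0$ into the long exact sequence in $\Mod_{g-1,1}(L)$-homology. Theorem~\ref{theorem:hainh1} kills the left end, so you get
$$0\to E^2_{1,1}\to\HH_1\bigl(\Mod_{g-1,1}(L);\HH_1(K_{g-1,1};\Q)\bigr)\xrightarrow{\ \delta\ }\Q\to\bigl(\HH_1(\overline{K}_{g-1,1};\Q)\bigr)_{\Mod_{g-1,1}(L)}=0.$$
Thus $\delta$ is surjective onto $\Q$. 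Lemma~\ref{lemma:vanishing} (applied at genus $g-1$, requiring $g-1\geq 4$, i.e.\ $g\geq 5$) identifies the middle term with $\HH_1(\Mod_{g-1,1}(L);\HH_1(\Sigma_{g-1,1};\Q))$, which Lemma~\ref{lemma:h1modlh} computes to be $\Q$. So $\delta$ is an isomorphism between one-dimensional spaces and $E^2_{1,1}=0$. This is precisely what the paper proves as Lemma~\ref{lemma:weakstabilityvanish}, though the paper's proof goes through a comparison with the unlevel group $\Mod_{g,1}$ and the coefficient module $\HH_1(U\Sigma_{g,1};\Q)$ via Lemma~\ref{lemma:h1modluh}; your dimension-count route is a bit more economical once $(\HH_1(\overline{K}_{g-1,1};\Q))_{\Mod_{g-1,1}(L)}=0$ is in hand. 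Write this step out explicitly and your proof is complete.
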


If $\gamma_1$ and $\gamma_2$ are two nonseparating simple closed curves that intersect once, then
we can find a subsurface $S$ of $\Sigma_{g}$ such that $S \cong \Sigma_{g-1,1}$ and such that
$S$ is embedded in $\Sigma_{g}$ as depicted in Figure \ref{figure:weakstability}.b.  We have
a commutative diagram
$$\begin{CD}
\HH_2(\Mod(S,L);\Q)            @>>> \HH_2(\Mod_{g,\gamma_1}(L);\Q)\\
@VVV                                @VVV\\
\HH_2(\Mod_{g,\gamma_2}(L);\Q) @>>> \HH_2(\Mod_g(L);\Q)
\end{CD}$$
Proposition \ref{proposition:weakstability} says that the natural
map $\HH_2(\Mod(S,L);\Q) \rightarrow \HH_2(\Mod_{g,\gamma_i}(L);\Q)$ is an isomorphism for $1 \leq i \leq 2$.  This
implies that the images of $\HH_2(\Mod_{g,\gamma_1}(L);\Q)$ and
$\HH_2(\Mod_{g,\gamma_2}(L);\Q)$ in $\HH_2(\Mod_{g}(L);\Q)$ are equal.  

It is well-known (see, e.g., \cite[Lemma A.2]{PutmanCutPaste}) 
that for any two nonseparating simple closed curves $\gamma$ and $\gamma'$ on
$\Sigma_{g}$, there is a sequence
$$\gamma = \alpha_1, \alpha_2, \ldots, \alpha_k = \gamma'$$
of nonseparating simple closed curves on $\Sigma_{g}$ such that $\alpha_i$ and $\alpha_{i+1}$ intersect
once for $1 \leq i < k$.  We conclude that each factor of
$$\bigoplus_{\gamma \in (\CNosep_{g})^{(0)}} \HH_2(\Mod_{g,\gamma}(L);\Q)$$
has the same image in $\HH_2(\Mod_{g}(L);\Q)$.  Proposition \ref{proposition:decompthm}
thus implies that for any nonseparating simple closed curve $\gamma$, the map
$\HH_2(\Mod_{g,\gamma}(L);\Q) \longrightarrow \HH_2(\Mod_{g}(L);\Q)$ is surjective, and
the theorem follows.
\end{proof}

\section{Proof of the decomposition theorem}
\label{section:decompthm}
This section has four parts.  First, in \S \ref{section:decompthmweak} we prove a slightly weakened
version of Proposition \ref{proposition:decompthm}, making use of a certain connectivity
result whose proof we postpone.  Next, in \S \ref{section:decompthmstrong} we strengthen our result
to prove Proposition \ref{proposition:decompthm}.  Finally, in \S \ref{section:cnosepmodlcon} we prove the
aforementioned connectivity result.

\subsection{The nonseparating complex of curves and a weak version of Proposition \ref{proposition:decompthm}}
\label{section:decompthmweak}

In this section, we prove a slight weakening of Proposition \ref{proposition:decompthm}.  Our
main tool will be a certain theorem arising from the theory of equivariant homology that gives
a decomposition of the homology groups of a group acting on a simplicial complex.  This result
is usually stated in terms of a certain spectral sequence, but in our situation only one relevant term
of the spectral sequence is non-zero, so we are able to avoid even mentioning it.  First, a
definition.

\begin{definition}
A group $G$ acts on a simplicial complex $X$ {\em without rotations} if for all
simplices $s$ of $X$, the stabilizer $G_s$ fixes $s$ pointwise.
\end{definition}

The result we need 
is the following.  It follows easily from the two spectral sequences given in \cite[Chapter VII.7]{BrownCohomology}.

\begin{theorem}
\label{theorem:equivarianthomologythm}
Let $R$ be a ring, and consider a group $G$ acting without rotations on a simply connected simplicial complex $X$.  Assume
that $X/G$ is $2$-connected and that for any $\{v,v'\} \in X^{(1)}$ we have $\HH_1(G_{\{v,v'\}};R) = 0$.  Then the natural map
$$\bigoplus_{v \in X^{(0)}} \HH_2(G_v;R) \longrightarrow \HH_2(G;R)$$
is surjective.
\end{theorem}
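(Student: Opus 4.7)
The plan is to apply the theory of equivariant homology via the Borel construction $X_{hG} := EG \times_G X$. Following Brown's \cite{BrownCohomology} Chapter VII.7, there are two spectral sequences, both abutting to $\HH_{\ast}(X_{hG};R)$: the equivariant spectral sequence obtained by filtering by skeleta of $X$,
\[
E^1_{p,q} = \bigoplus_{\sigma \in X^{(p)}/G} \HH_q(G_\sigma;R) \;\Longrightarrow\; \HH_{p+q}(X_{hG};R)
\]
(with trivial coefficients, since $G$ acts without rotations), and the Serre spectral sequence
\[
E^2_{p,q} = \HH_p(G;\HH_q(X;R)) \;\Longrightarrow\; \HH_{p+q}(X_{hG};R)
\]
of the bundle $X \to X_{hG} \to BG$.

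Using the Serre spectral sequence: simple-connectivity of $X$ forces $\HH_1(X;R) = 0$, so the $q=1$ row of $E^2$ vanishes. All outgoing differentials from $E^r_{2,0}$ then target zero groups, so $E^\infty_{2,0} = E^2_{2,0} = \HH_2(G;R)$, and the edge map $\HH_2(X_{hG};R) \twoheadrightarrow \HH_2(G;R)$ is a surjection.

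Using the equivariant spectral sequence: the hypothesis $\HH_1(G_e;R) = 0$ for every edge $e$ gives $E^1_{1,1} = 0$, hence $E^\infty_{1,1} = 0$. The bottom row $E^1_{p,0} = \bigoplus_{\sigma \in X^{(p)}/G} R$ with its $d^1$ differential is the cellular chain complex of $X/G$, so $E^2_{p,0} = \HH_p(X/G;R) = 0$ for $p=1,2$ by $2$-connectivity of $X/G$, whence $E^\infty_{2,0} = 0$. The filtration on $\HH_2(X_{hG};R)$ collapses to give $\HH_2(X_{hG};R) = E^\infty_{0,2}$, which is a quotient of $E^1_{0,2} = \bigoplus_{v \in X^{(0)}/G} \HH_2(G_v;R)$.

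Composing the two surjections produces
\[
\bigoplus_{v \in X^{(0)}/G} \HH_2(G_v;R) \twoheadrightarrow \HH_2(X_{hG};R) \twoheadrightarrow \HH_2(G;R),
\]
which upgrades at once to a surjection summed over all of $X^{(0)}$, since stabilizers of vertices in a common $G$-orbit are conjugate in $G$ and so induce the same map into $\HH_2(G;R)$. The point requiring verification is that this composition agrees with the natural map coming from the inclusions $G_v \hookrightarrow G$: each summand maps into $\HH_2(X_{hG};R)$ via the inclusion $BG_v \hookrightarrow X_{hG}$ (the image of $EG \times \{v\}$), and the Serre edge map covers the classifying map $BG_v \to BG$ of $G_v \hookrightarrow G$. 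This edge-map identification is the main technical step, but it is a standard naturality statement for the Borel construction and presents no substantive obstacle.
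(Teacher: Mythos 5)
Your proof is correct and follows exactly the route the paper intends: the paper gives no argument of its own, noting only that the statement ``follows easily from the two spectral sequences given in \cite[Chapter VII.7]{BrownCohomology},'' which are precisely the skeletal-filtration (equivariant) spectral sequence and the Serre spectral sequence of the Borel fibration $X \to X_{hG} \to BG$ that you play off each other. The details you supply — killing the $q=1$ row of the Serre sequence by simple connectivity of $X$, killing $E^\infty_{2,0}$ and $E^\infty_{1,1}$ in the equivariant sequence by $2$-connectivity of $X/G$ and vanishing of edge-stabilizer $\HH_1$, and matching the composite edge map with the natural map — are all accurate.
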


To apply this to our situation, will need the following theorem of Harer.

\begin{theorem}[{Harer, \cite[Theorem 1.1]{HarerStability}}]
\label{theorem:cnosepcon}
$\CNosep_{g}$ is $(g-2)$-connected.
\end{theorem}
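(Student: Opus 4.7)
My plan is to prove Theorem \ref{theorem:cnosepcon} by induction on $g$, with base case $g=1$ being the trivial fact that $\CNosep_1$ is nonempty. For the inductive step, I would first strengthen the statement to a uniform bound for analogous nonseparating curve complexes on $\Sigma_{g,b}^n$, since the link of any vertex of $\CNosep_g$ is naturally a nonseparating curve complex on the cut surface $\Sigma_{g-1,2}$; the strengthening should be arranged so that each such link is at least $(g-3)$-connected, exactly matching what one needs to fill $(k-1)$-spheres in links when $k \leq g-2$.

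Given this, to show $\pi_k(\CNosep_g) = 0$ for $k \leq g-2$ I would start with a simplicial map $f : S \to \CNosep_g$ from a combinatorial $k$-sphere (permissible by Lemma \ref{lemma:simpapprox}) and extend it over a combinatorial $(k+1)$-ball. The main tool is a Hatcher-style ``bad simplex'' surgery: fix an auxiliary nonseparating curve $\alpha$ on $\Sigma_g$, put each $f(v)$ in minimal position with $\alpha$, and define the complexity
$$c(f) = \sum_{v \in S^{(0)}} i(\alpha, f(v)),$$
where $i(\cdot,\cdot)$ is geometric intersection number. If $c(f) = 0$ and $\alpha$ is disjoint from, nonisotopic to, and forms a nonseparating configuration with every simplex in $f(S)$, then the simplicial cone on $f$ with apex $\alpha$ extends $f$ to a combinatorial $(k+1)$-ball, and we are done.

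Otherwise I would decrease $c(f)$ by the following surgery step. Pick a vertex $v_0 \in S$ whose image attains the maximum value of $i(\alpha, f(v_0))$ among vertices of $\Star_S(v_0)$. Then $\Link_S(v_0)$ is a combinatorial sphere of dimension at most $k-1 \leq g-3$, and $f$ sends it into $\Link_{\CNosep_g}(f(v_0))$, which is $(g-3)$-connected by the strengthened inductive hypothesis applied to the cut surface. Hence $f|_{\Link_S(v_0)}$ bounds in the link, and I would replace $\Star_S(v_0)$ in $S$ by such a filling. To ensure $c(f)$ strictly drops, the filling must be chosen so that every new vertex intersects $\alpha$ strictly fewer times than $f(v_0)$ does; this is engineered by an innermost-bigon/innermost-arc surgery on curves in $\Link_{\CNosep_g}(f(v_0))$ relative to $\alpha$, and it forces the strengthened inductive statement to be phrased for nonseparating curve complexes on a surface with boundary so that surgered curves automatically land in the complex one is inducting on.

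The main obstacle is exactly this last compatibility: producing fillings inside links that also respect the intersection-with-$\alpha$ constraint. Setting up the strengthened statement so that the induction carries this auxiliary bookkeeping cleanly (together with checking the base cases of the strengthened version) is the only real technical content; once it is in place, finitely many surgeries reduce $c(f)$ to zero and coning off to $\alpha$ completes the proof.
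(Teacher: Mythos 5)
The paper does not prove this statement; it is imported verbatim from Harer's stability paper, where the connectivity of $\CNosep_g$ is deduced from contractibility of arc complexes via a homotopy-theoretic argument. Your proposal instead sketches a direct ``Hatcher flow'' surgery on the curve complex, which is a legitimate and now-standard alternative route. However, as written the sketch has gaps that go beyond bookkeeping, and I do not think the plan closes without substantive changes.

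First, the complexity $c(f) = \sum_{v} i(\alpha, f(v))$ does not obviously decrease under the surgery you describe. When you excise $\Star_S(v_0)$ and glue in a filling $B$ of $\Link_S(v_0)$, you remove the single term $i(\alpha, f(v_0)) = M$ and add $\sum i(\alpha, f'(w))$ over the \emph{interior} vertices $w$ of $B$. Your constraint only demands each such summand be $< M$; if $B$ has several interior vertices the total can increase. The standard fix is to use the lexicographic complexity $(\max_v i(\alpha, f(v)), \#\{\text{vertices realizing the max}\})$, or to arrange that every interior vertex of the filling has intersection \emph{zero} with $\alpha$, which is in fact what a correct surgery produces (surger only along arcs of $\alpha$ cut off by $f(v_0)$; the resulting curves are disjoint from $\alpha$, not merely of lower intersection). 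As phrased, your claim that finitely many surgeries reduce $c(f)$ to zero is unsupported.

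Second, the coning-off step at $c(f)=0$ does not come for free. Having all $f(v)$ disjoint from $\alpha$ does not give you a simplicial cone in $\CNosep_g$: you also need $\alpha$ nonisotopic to every $f(v)$, and for every simplex $\Delta$ of $S$ you need $\Sigma_g \setminus (\alpha \cup f(\Delta))$ to remain connected. If $f(\Delta)$ already has $g-1$ or $g$ curves these conditions can genuinely fail, and they must be preempted either by carrying them through the surgery (pushing $f$ into $\Star_{\CNosep_g}(\alpha)$, not merely into the set of curves disjoint from $\alpha$) or by an extra final homotopy, neither of which you discuss. Relatedly, ``$\Link_{\CNosep_g}(f(v_0))$ is $(g-3)$-connected by the strengthened inductive hypothesis'' requires that the link be the nonseparating curve-system complex of $\Sigma_{g-1,2}$ \emph{and} that the strengthened statement cover two boundary components with the right connectivity bound; the strengthening has to be stated precisely before the induction is coherent, and that is exactly where the known proofs (Harer via arc complexes, or Hatcher--Wahl via connectivity transfer lemmas) spend their effort. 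You correctly flag this as ``the only real technical content,'' but in my view it, together with the two points above, is the entire proof; the sketch identifies the right machine but does not yet run it.

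A small additional note: filling $f|_{\Link_S(v_0)}$ inside $\Link_{\CNosep_g}(f(v_0))$ requires $\Link_S(v_0)$ to be a sphere of dimension $\le g-3$, which is automatic when $S$ is a combinatorial $k$-sphere and $k \le g-2$; that part is fine. But the surgered curves must land back in $\Link_{\CNosep_g}(f(v_0))$, i.e.\ be disjoint from $f(v_0)$, nonseparating, and form nonseparating systems with the rest of the simplex. Innermost-arc surgery along $\alpha$ does not automatically preserve all of these, and you would need to argue (as Hatcher does) that among the finitely many surgery outcomes at least one works. Without that argument the inductive step is incomplete.
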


In \S \ref{section:cnosepmodlcon}, we will prove the following result, which is a
variant of \cite[Proposition 4.4]{PutmanInfinite}.

\begin{proposition}
\label{proposition:cnosepmodlcon}
For $L \geq 3$ and $g \geq 2$, the space $\CNosep_{g} / \Mod_{g}(L)$ is $(g-2)$-connected.
\end{proposition}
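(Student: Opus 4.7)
The plan is to deduce the $(g-2)$-connectedness of $\CNosep_g/\Mod_g(L)$ from Harer's theorem (Theorem \ref{theorem:cnosepcon}) via a sphere-lifting argument. The first preliminary task is to verify that $\Mod_g(L)$ acts on $\CNosep_g$ without rotations, so that $\CNosep_g/\Mod_g(L)$ is a genuine simplicial complex. Suppose $\phi \in \Mod_g(L)$ preserves a simplex $\{\gamma_1,\ldots,\gamma_n\}$ setwise. The homology classes $[\gamma_1],\ldots,[\gamma_n]$ extend to part of a symplectic basis of $\HH_1(\Sigma_g;\Z)$, hence are linearly independent modulo $L$. Since $\phi$ acts trivially on $\HH_1(\Sigma_g;\Z/L)$, no nontrivial permutation of these classes can be realized by $\phi$. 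The hypothesis $L \geq 3$ also rules out orientation reversal of an individual $\gamma_i$, since then $[\gamma_i]$ would be sent to $-[\gamma_i]$, which is not congruent to $[\gamma_i]$ modulo $L$ when $L \geq 3$. Hence $\phi$ fixes each $\gamma_i$ as an oriented isotopy class.

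With this in hand, given a combinatorial $k$-sphere $S$ with $k \leq g-2$ and a simplicial map $f: S \to \CNosep_g/\Mod_g(L)$ (which represents an arbitrary element of $\pi_k$ by Lemma \ref{lemma:simpapprox}), I would proceed in three steps. First, construct a simplicial lift $\tilde{f}: S \to \CNosep_g$ with $\pi \circ \tilde{f} = f$, where $\pi$ denotes the quotient map. Second, apply Harer's theorem \ref{theorem:cnosepcon}: since $\CNosep_g$ is $(g-2)$-connected, there is a combinatorial $(k+1)$-ball $B$ with $\partial B = S$ and a simplicial extension $\tilde{g}: B \to \CNosep_g$ of $\tilde{f}$. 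Third, set $g = \pi \circ \tilde{g}: B \to \CNosep_g/\Mod_g(L)$; this is the desired simplicial extension of $f$, confirming $\pi_k(\CNosep_g/\Mod_g(L)) = 0$.

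The lifting in the first step proceeds inductively over the skeleta of $S$. I would pick a basepoint $v_0 \in S$ and an arbitrary lift $\tilde{v}_0 \in \CNosep_g$ of $f(v_0)$. For each edge $\{v_0, v\}$ of $S$, the preimage in $\CNosep_g$ of the edge $\{f(v_0), f(v)\}$ consists of a single $\Mod_g(L)$-orbit of edges, so by applying a suitable element of $\Mod_g(L)$ one can find a lift $\tilde{v}$ with $\{\tilde{v}_0, \tilde{v}\} \in \CNosep_g$. One then globalizes the lift across $S$ along paths in $S^{(1)}$. Consistency around loops in $S$ is obtained by exploiting the simple-connectedness of $\CNosep_g$ (a special case of Harer's theorem for $g \geq 3$, with the boundary cases $g=2$ handled separately by a direct check), combined with the without-rotations property, which makes the allowed lift at each step unique up to the stabilizer $\Mod_g(L)_{\tilde{v}_0}$. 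Higher-dimensional simplices are then filled in using that for any simplex $\sigma$ of $\CNosep_g/\Mod_g(L)$ and any lift of a codimension-one face of $\sigma$, there is a compatible lift of $\sigma$ itself, by transitivity of the stabilizer action on the relevant completions.

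The main obstacle is precisely this lifting construction: the projection $\pi$ is not a covering map, so lifts do not exist automatically, and the transitivity facts required to globalize local lifts are nontrivial statements about the action of $\Mod_g(L)$ on configurations of disjoint nonseparating curves with prescribed mod-$L$ homology data. Establishing that stabilizers $\Mod_g(L)_{\tilde{v}_0}$ act transitively on edges emanating from $\tilde{v}_0$ with a given projection, and more generally that the relevant fibers of $\pi$ are ``highly transitive,'' is the technical heart of the argument, and likely requires the variant of the change-of-coordinates principle employed in \cite[Proposition 4.4]{PutmanInfinite}, adapted from the Torelli to the level $L$ setting.
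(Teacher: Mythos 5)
Your approach is genuinely different from the paper's and it has a gap at the crucial step: the sphere-lifting argument does not work, and the justification you give for it is not valid.

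The paper does not try to lift spheres from the quotient to $\CNosep_g$ at all. Instead it identifies $\CNosep_g/\Mod_g(L)$ with a purely combinatorial object, the complex $\Bases(g,L)$ of lax isotropic bases in $\HH_1(\Sigma_g;\Z/L)$ (via \cite[Lemma 6.2]{PutmanCutPaste} and an analysis of the $\Sp_{2g}(\Z,L)$-action), and then proves $(g-2)$-connectivity of $\Bases(g,L)$ directly by an inductive ``rank-reduction'' argument on the $\rho$-coordinate of the vectors appearing in a simplicial sphere, in the spirit of the Euclidean algorithm. Harer's Theorem \ref{theorem:cnosepcon} is not used at all in the proof of Proposition \ref{proposition:cnosepmodlcon}; it is used separately (alongside Proposition \ref{proposition:cnosepmodlcon}) as input to the equivariant-homology Theorem \ref{theorem:equivarianthomologythm}.

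The problem with your plan is that the quotient map $\pi\colon \CNosep_g \to \CNosep_g/\Mod_g(L)$ is not a covering map (vertex stabilizers are highly nontrivial), so spheres in the quotient need not lift. You correctly observe that a single edge lifts once one endpoint is lifted, and the transitivity argument for filling in higher simplices over a lifted face is fine, but the step that ``globalizes the lift across $S$'' is where the argument breaks. Appealing to the simple-connectivity of $\CNosep_g$ does not help: in classical covering-space theory the lifting criterion involves $\pi_1$ of the \emph{domain} of $f$ versus the image of $\pi_1$ of the cover, and here the map is not even a covering. Concretely, for $k=1$ (needed for $g=3$) you must lift a genuine loop; the monodromy around that loop is a nontrivial obstruction, and nothing you cite controls it. The toy example $\R \to \R/\Z = S^1$ (a free, hence without-rotations, simplicial action on a contractible complex) shows that high connectivity of $X$ together with the without-rotations hypothesis simply does not force any connectivity of $X/G$: the ``consistency around loops'' fails precisely because lifts of a loop end at a translate $\phi(\tilde v_0)$ of the starting lift rather than at $\tilde v_0$. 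To get even $\pi_1(X/G)=0$ from this setup one would need something like Armstrong's theorem, which requires $G$ to be normally generated by elements with fixed points in $X$ — a nontrivial structural fact about $\Mod_g(L)$ that you neither state nor use — and even that only handles $\pi_1$, not the full $(g-2)$-connectivity. The paper's reduction to $\Bases(g,L)$ sidesteps all of this.
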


First, however, we will prove Proposition \ref{proposition:decompthmweak} below. 
The statement of it resembles Proposition \ref{proposition:decompthm},
but instead of the groups $\Mod_{g,\gamma}(L)$ on the ``cut'' surface used in Proposition
\ref{proposition:decompthm}, it uses the stabilizer subgroups $(\Mod_{g}(L))_{\gamma}$.  As
we will see in \S \ref{section:decompthmstrong}, the groups $\HH_2((\Mod_{g}(L))_{\gamma};\Q)$ are
slightly bigger than the groups $\HH_2(\Mod_{g,\gamma}(L);\Q)$, so this is a slight weakening
of Proposition \ref{proposition:decompthm}.

\begin{proposition}
\label{proposition:decompthmweak}
For $g \geq 5$ and $L \geq 3$, the natural map
$$\bigoplus_{\gamma \in (\CNosep_{g})^{(0)}} \HH_2((\Mod_{g}(L))_{\gamma};\Q) \longrightarrow \HH_2(\Mod_{g}(L);\Q)$$
is surjective.
\end{proposition}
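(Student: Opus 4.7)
The plan is to apply Theorem \ref{theorem:equivarianthomologythm} directly with $G=\Mod_g(L)$, $X=\CNosep_g$, and $R=\Q$; its conclusion is exactly the surjectivity asserted in the proposition, so the work lies entirely in verifying the four hypotheses: that the action is without rotations, that $X$ is simply connected, that $X/G$ is $2$-connected, and that the first rational homology of every edge stabilizer vanishes.

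First I would check that the action of $\Mod_g(L)$ on $\CNosep_g$ is without rotations, and it is here that the hypothesis $L\ge 3$ enters. Suppose $\phi\in\Mod_g(L)$ preserves (as a set) the simplex $\{\gamma_1,\ldots,\gamma_n\}$. Then $\phi([\gamma_i])=\pm[\gamma_{\sigma(i)}]$ in $\HH_1(\Sigma_g;\Z)$ for some permutation $\sigma$, and because $\phi$ acts trivially on $\HH_1(\Sigma_g;\Z/L)$ we obtain the congruences $[\gamma_i]\equiv\pm[\gamma_{\sigma(i)}]\pmod L$. The classes $[\gamma_1],\ldots,[\gamma_n]$ extend to a symplectic basis of $\HH_1(\Sigma_g;\Z)$, so such a congruence forces $\sigma(i)=i$, and then $2[\gamma_i]\equiv 0\pmod L$ forces the sign to be $+$ as soon as $L\ge 3$. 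This simultaneously rules out permutations of the vertices and reversals of orientation of individual curves. The next two hypotheses are immediate from $g\ge 5$: Harer's Theorem \ref{theorem:cnosepcon} makes $\CNosep_g$ at least $3$-connected, and Proposition \ref{proposition:cnosepmodlcon} makes $\CNosep_g/\Mod_g(L)$ at least $3$-connected, so in particular simply- and $2$-connected respectively.

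For the final hypothesis, fix a $1$-simplex $e=\{\gamma_1,\gamma_2\}$. Since the action is without rotations, $(\Mod_g(L))_e$ is exactly the subgroup of mapping classes fixing each $\gamma_i$ with its orientation. A standard cutting argument (isotope any representative to fix a neighborhood of $\gamma_1\cup\gamma_2$, then restrict to the complement) produces a surjection $\Mod_{g,\gamma_1,\gamma_2}(L)\twoheadrightarrow(\Mod_g(L))_e$. Lemma \ref{lemma:h1modlstab}, applied with $k=2$ (valid because $g-2\ge 3$), gives $\HH_1(\Mod_{g,\gamma_1,\gamma_2}(L);\Q)=0$, and since a group surjection induces a surjection on $\HH_1(-;\Q)$, this forces $\HH_1((\Mod_g(L))_e;\Q)=0$, completing the verification.

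The main obstacle that I expect to require care is the ``no rotations'' step: one must simultaneously control permutations of the vertices of a simplex and orientation reversals of individual curves, and this is exactly the place where the hypothesis $L\ge 3$ (rather than merely $L\ge 2$) is genuinely used. Once that is in hand, the other three hypotheses follow from results already in the excerpt with no further work.
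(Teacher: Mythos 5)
Your proof is correct and takes the same route as the paper: verify the hypotheses of Theorem~\ref{theorem:equivarianthomologythm} for the action of $\Mod_g(L)$ on $\CNosep_g$, citing Theorem~\ref{theorem:cnosepcon}, Proposition~\ref{proposition:cnosepmodlcon}, and Lemma~\ref{lemma:h1modlstab}; the cutting argument you spell out for the edge stabilizers is exactly what the paper leaves implicit when it invokes Lemma~\ref{lemma:h1modlstab}. One small correction to your closing remark about where $L\geq 3$ is ``genuinely used'': the action is without rotations for every $L\geq 2$, since the vertices of a simplex are part of a $\Z$-basis and hence already have distinct \emph{lax} classes in $\HH_1(\Sigma_g;\Z/L)$, which forces $\sigma=\mathrm{id}$ without any sign analysis. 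In this proposition $L\geq 3$ actually enters through Proposition~\ref{proposition:cnosepmodlcon}; your orientation-preservation step is a valid additional use of $L\geq 3$ to get the exact surjection $\Mod_{g,\gamma_1,\gamma_2}(L)\twoheadrightarrow(\Mod_g(L))_e$, but even that could be sidestepped by passing to the finite-index orientation-preserving subgroup and applying transfer.
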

\begin{proof}
Since the curves in a simplex of $\CNosep_{g}$ all define different classes in $\HH_1(\Sigma_g;\Z/L)$, the
group $\Mod_{g}(L)$ acts without rotations on $\CNosep_{g}$.  Theorem \ref{theorem:cnosepcon} and 
Lemma \ref{lemma:h1modlstab} together with Proposition \ref{proposition:cnosepmodlcon}
thus imply that the action of $\Mod_{g}(L)$ on $\CNosep_g$ satisfies the conditions of Theorem
\ref{theorem:equivarianthomologythm}.  The proposition follows.
\end{proof}

\subsection{The proof of Proposition \ref{proposition:decompthm}}
\label{section:decompthmstrong}

\begin{proof}[{Proof of Proposition \ref{proposition:decompthm}}]
Consider a simple closed nonseparating curve $\delta$ on $\Sigma_g$.  Since $L \geq 3$, the stabilizer
subgroup $(\Mod_{g}(L))_{\delta}$ cannot reverse the orientation of $\delta$.  
Letting $\delta_1$ and $\delta_2$ be the boundary curves of $\Sigma_{g,\delta}$, we thus have a central extension
\begin{equation}
\label{eqn:centralextension}
1 \longrightarrow \Z \longrightarrow \Mod_{g,\delta}(L) \longrightarrow (\Mod_{g}(L))_{\delta} \longrightarrow 1,
\end{equation}
where the kernel $\Z$ is generated by $T_{\delta_1} T_{\delta_2}^{-1}$.  Using Lemma
\ref{lemma:h1modlstab}, we can apply Lemma \ref{lemma:eliminatepuncture}
and conclude that we have a short exact sequence
$$0 \longrightarrow \HH_2(\Mod_{g,\delta}(L);\Q) \longrightarrow \HH_2((\Mod_{g}(L))_{\delta};\Q) \longrightarrow \Q
\longrightarrow 0.$$
To deduce Proposition \ref{proposition:decompthm} from Proposition \ref{proposition:decompthmweak},
we must show that the image in $\HH_2(\Mod_g(L);\Q)$ of one of the other summands of
$$\bigoplus_{\gamma \in (\CNosep_{g})^{(0)}} \HH_2(\Mod_{g,\gamma}(L);\Q)$$
contains the image of a complement to $\HH_2(\Mod_{g,\delta}(L);\Q)$ in $\HH_2((\Mod_{g}(L))_{\delta};\Q)$.

Choose an embedded subsurface $\Sigma_{4,1} \hookrightarrow \Sigma_{g}$ with $\delta \subset \Sigma_{4,1}$.  We can
expand \eqref{eqn:centralextension} to a commutative diagram of central extensions
$$\begin{CD}
1 @>>> \Z @>>> \Mod_{4,1,\delta}(L) @>>> (\Mod_{4,1}(L))_{\delta} @>>> 1\\
@.     @VV{=}V @VVV                   @VVV                     @.\\
1 @>>> \Z @>>> \Mod_{g,\delta}(L)   @>>> (\Mod_{g}(L))_{\delta} @>>> 1
\end{CD}$$
By Lemmas \ref{lemma:h1modlstab} and Lemma \ref{lemma:eliminatepuncture}, we have a corresponding commutative diagram
of short exact sequences
$$\begin{CD}
0 @>>> \HH_2(\Mod_{4,1,\delta}(L);\Q) @>>> \HH_2((\Mod_{4,1}(L))_{\delta};\Q) @>>> \Q @>>> 0\\
@.     @VVV                                @VVV                                    @VV{\cong}V @.\\
0 @>>> \HH_2(\Mod_{g,\delta}(L);\Q) @>>> \HH_2((\Mod_{g}(L))_{\delta};\Q) @>>> \Q @>>> 0
\end{CD}$$
This implies that the image of $\HH_2((\Mod_{4,1}(L))_{\delta};\Q)$ in $\HH_2((\Mod_{g}(L))_{\delta};\Q)$
contains a complement to $\HH_2(\Mod_{g,\delta}(L);\Q)$ in $\HH_2((\Mod_{g}(L))_{\delta};\Q)$.
Let $\delta'$ be any nonseparating simple closed curve on $\Sigma_g$ that is disjoint from $\Sigma_{4,1}$.  The
key observation is that we have a commutative diagram 
$$\begin{CD}
(\Mod_{4,1}(L))_{\delta}    @>>> (\Mod_{g}(L))_{\delta} \\
@VVV                             @VVV \\
\Mod_{g,\delta'}(L)        @>>> \Mod_g(L)
\end{CD}$$
Thus the image of $\HH_2(\Mod_{g,\delta'}(L);\Q)$ in $\HH_2(\Mod_g(L);\Q)$ contains
the image in $\HH_2(\Mod_g(L);\Q)$ 
of $\Image(\HH_2((\Mod_{4,1}(L))_{\delta};\Q) \rightarrow \HH_2((\Mod_{g}(L))_{\delta};\Q))$, and we
are done.
\end{proof}

\subsection{The proof of Proposition \ref{proposition:cnosepmodlcon}}
\label{section:cnosepmodlcon}

In \S \ref{section:linalg} we give a linear-algebraic
reformulation of Proposition \ref{proposition:cnosepmodlcon}.  The actual proof
is in \S \ref{section:laxisobases}.

\subsubsection{A linear-algebraic reformulation of Proposition \ref{proposition:cnosepmodlcon}}
\label{section:linalg}

We will need the following definition.

\begin{definition}
Fix $L \geq 0$ and $g \geq 1$.
\begin{itemize}
\item A {\em primitive vector} $v \in \HH_1(\Sigma_g;\Z/L)$ is a nonzero vector such that if $w \in \HH_1(\Sigma_g;\Z/L)$
satisfies $v = c \cdot w$ for some $c \in \Z/L$, then $c$ is a unit.
\item A {\em lax primitive vector} in $\HH_1(\Sigma_g;\Z/L)$ is a pair $\{v,-v\}$, where $v$ is a primitive vector.  
We will denote this pair by $\pm v$.
\item A {\em $k$-dimensional lax isotropic basis} in $\HH_1(\Sigma_g;\Z/L)$ 
is a set $\{\pm v_1, \ldots \pm v_k\}$ of lax primitive vectors such that $i(v_i,v_j) = 0$ for all $1 \leq i,j \leq k$
and such that $\Span{v_1,\ldots,v_k}$ is a summand of $V$ that is isomorphic to a $k$-dimensional free $\Z/L$-module.
\item We will denote by $\Bases(g,L)$ the simplicial complex whose $(k-1)$-simplices are $k$-dimensional lax isotropic
bases in $\HH_1(\Sigma_g;\Z/L)$.  Also, if $\Delta$ is either $\emptyset$ or a simplex of $\Bases(g,L)$, then
we will denote by $\Bases^{\Delta}(g,L)$ the simplicial complex consisting of the link of $\Delta$ in $\Bases(g,L)$.
Finally, if $W$ is an arbitrary $\Z/L$-submodule of $\HH_1(\Sigma_g;\Z/L)$, 
then we will denote by $\Bases^{\Delta,W}(g,L)$ the subcomplex
of $\Bases^{\Delta}(g,L)$ consisting of simplices $\{\pm v_1,\ldots, \pm v_k\}$ such that
$v_i \in W$ for $1 \leq i \leq k$.
\end{itemize}
\end{definition}

We then have the following result.

\begin{proposition}
For $g \geq 1$ and $L \geq 2$ we have $\CNosep_g / \Mod_g(L) \cong \Bases(g,L)$.
\end{proposition}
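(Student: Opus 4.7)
The plan is to construct an explicit simplicial isomorphism
$$\overline{\Phi} : \CNosep_g / \Mod_g(L) \longrightarrow \Bases(g,L)$$
induced on vertices by the rule $\gamma \mapsto \pm [\gamma]\pmod{L}$, where $[\gamma] \in \HH_1(\Sigma_g;\Z)$ is the homology class of some orientation of $\gamma$. Since the curves appearing in a simplex of $\CNosep_g$ define distinct classes in $\HH_1(\Sigma_g;\Z/L)$ for $L \geq 2$, the group $\Mod_g(L)$ acts on $\CNosep_g$ without rotations and the quotient is an honest simplicial complex.

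First I would check that the assignment $\gamma \mapsto \pm[\gamma]$ extends to a simplicial map $\Phi : \CNosep_g \to \Bases(g,L)$ that descends to $\overline{\Phi}$. Since a nonseparating simple closed curve has primitive homology class in $\HH_1(\Sigma_g;\Z)$, its reduction modulo $L$ is primitive in $\HH_1(\Sigma_g;\Z/L)$, giving a lax primitive vector. If $\{\gamma_1,\ldots,\gamma_k\}$ is a simplex of $\CNosep_g$, then the connectedness of $\Sigma_g \setminus (\gamma_1 \cup \cdots \cup \gamma_k)$ guarantees that $[\gamma_1],\ldots,[\gamma_k]$ is a partial symplectic basis of $\HH_1(\Sigma_g;\Z)$; reducing modulo $L$ produces a free rank-$k$ isotropic summand, so the image is a simplex of $\Bases(g,L)$. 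Finally, $\Mod_g(L)$ acts trivially on $\HH_1(\Sigma_g;\Z/L)$ by definition, so $\Phi$ factors through $\overline{\Phi}$.

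The second step is surjectivity of $\overline{\Phi}$ on simplices. Given a lax isotropic basis $\{\pm v_1,\ldots, \pm v_k\}$ in $\HH_1(\Sigma_g;\Z/L)$, I would invoke the surjectivity of $\Sp_{2g}(\Z) \to \Sp_{2g}(\Z/L)$ to lift the matrix that sends the first $k$ vectors of a standard symplectic basis to $v_1,\ldots,v_k$; applying the lift to the standard basis yields primitive integral vectors $\tilde v_1,\ldots,\tilde v_k$ that form a partial symplectic basis of $\HH_1(\Sigma_g;\Z)$ and reduce modulo $L$ to $v_1,\ldots,v_k$. The classical change-of-coordinates principle realizes $\tilde v_1,\ldots,\tilde v_k$ as the homology classes of pairwise disjoint nonseparating simple closed curves with connected complement, giving a simplex of $\CNosep_g$ that maps to $\{\pm v_1,\ldots,\pm v_k\}$.

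The main obstacle is injectivity of $\overline{\Phi}$: two simplices $\{\gamma_i\}$ and $\{\gamma_i'\}$ of $\CNosep_g$ with the same image under $\Phi$ must lie in the same $\Mod_g(L)$-orbit. After relabeling I may assume $[\gamma_i] \equiv \epsilon_i [\gamma_i'] \pmod{L}$ for some signs $\epsilon_i \in \{\pm 1\}$. Change-of-coordinates in $\Mod_g$ produces some $\psi \in \Mod_g$ with $\psi(\gamma_i) = \gamma_i'$, and the task reduces to correcting $\psi$ by an element $\alpha \in (\Mod_g)_{\{\gamma_i'\}}$ so that $\alpha\psi \in \Mod_g(L)$. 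By construction the image of $\psi$ in $\Sp_{2g}(\Z/L)$ lies in the stabilizer of the ordered tuple $(\pm[\gamma_1'],\ldots,\pm[\gamma_k'])$, so the key claim is that the map $(\Mod_g)_{\{\gamma_i'\}} \to \Sp_{2g}(\Z/L)$ surjects onto this stabilizer. I would prove this by exhibiting generators: the Dehn twists $T_{\gamma_i'}$ realize transvections $x \mapsto x + i(x,[\gamma_i'])[\gamma_i']$ on $\HH_1(\Sigma_g;\Z/L)$; mapping classes supported on the complement $\Sigma_g \setminus \bigcup \gamma_i' \cong \Sigma_{g-k,2k}$ realize the full stabilizer of the Lagrangian $\langle [\gamma_1'],\ldots,[\gamma_k']\rangle$ on its symplectic complement, using the surjectivity of $\Sp_{2(g-k)}(\Z) \to \Sp_{2(g-k)}(\Z/L)$; and half-twists in the complement realize sign changes $\epsilon_i$ of individual $[\gamma_i']$. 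A direct computation shows these generate the full stabilizer in $\Sp_{2g}(\Z/L)$, completing the proof. This is essentially the argument of \cite[Proposition 4.4]{PutmanInfinite} adapted to the nonseparating complex of curves.
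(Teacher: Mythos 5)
Your proposal is correct in outline but takes a genuinely different route from the paper. The paper does not work directly with curves: it cites a prior result ($\CNosep_g / \Torelli_g \cong \Bases(g,0)$, from the Torelli cut-and-paste paper) and then computes the further quotient by $\Mod_g(L)/\Torelli_g \cong \Sp_{2g}(\Z,L)$, so that the whole content of the proposition becomes a linear-algebraic assertion: two integral lax isotropic bases are $\Sp_{2g}(\Z,L)$-equivalent if and only if they have the same reduction mod $L$. You instead build the isomorphism directly via $\gamma \mapsto \pm[\gamma] \bmod L$ and check well-definedness, surjectivity, and injectivity at the level of curves; your injectivity step reduces to the claim that the $\Mod_g$-stabilizer of a simplex of $\CNosep_g$ surjects onto the $\Sp_{2g}(\Z/L)$-stabilizer of the corresponding tuple of lax vectors. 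These two cruxes are really the same surjectivity statement about reduction of parabolic-type stabilizers mod $L$, just phrased once in $\Sp_{2g}$ and once in $\Mod_g$; the paper's version is cleaner to state because all the curve-theoretic bookkeeping was already absorbed into the cited Torelli lemma, while yours is more self-contained.

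One caution: your closing sentence, that ``a direct computation shows these generate the full stabilizer,'' is doing all the real work and is not immediate. The unipotent radical of the parabolic stabilizer of $\langle a_1,\ldots,a_k\rangle$ in $\Sp_{2g}(\Z/L)$ contains more than the transvections $T_{a_i}$ (for instance, elements sending $b_i \mapsto b_i + a_j$ for $i\neq j$, and elements mixing $b_i$ with the symplectic complement); you would need to check that these are realized by Dehn twists along curves in $\Sigma_g \setminus \bigcup\gamma_i'$, and the ``half-twist'' realizing the sign change should be made precise (the honest symplectic sign change is $a_i\mapsto -a_i$, $b_i\mapsto -b_i$, realized by a hyperelliptic-type involution on a genus-one subsurface containing $\gamma_i'$, not a twist about a curve). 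None of this is wrong, but it is the nontrivial part of the argument and deserves to be spelled out rather than asserted.
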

\begin{proof}
By \cite[Lemma 6.2]{PutmanCutPaste}, we have
$$\CNosep_g / \Torelli_g \cong \Bases(g,0).$$
The space $\CNosep_g / \Mod_{g}(L)$ is the quotient of $\Bases(g,0)$ by $\Mod_{g}(L) / \Torelli_g \cong \Sp_{2g}(\Z,L)$,
and we have a surjection $\pi : \Bases(g,0) \rightarrow \Bases(g,L)$ that is invariant under
the action of $\Sp_{2g}(\Z,L)$.  Moreover, two $(k-1)$-simplices $s=\{\pm v_1,\ldots,\pm v_k\}$ and
$s'=\{\pm w_1,\ldots,\pm w_k\}$ of $\Bases(g,0)$ are in the same $\Sp_{2g}(\Z,L)$-orbit if and only if
after possibly reordering the $\pm v_i$ we have that $\pm v_i$ and $\pm w_i$ map to the
same lax vector in $\HH_1(\Sigma_g;\Z/L)$ for all $1 \leq i \leq k$, i.e.\ if and only if $\pi(s)=\pi(s')$.  Finally, since
no two lax vectors in a simplex of $\Bases(g,0)$ can map to the same lax vector in $\HH_1(\Sigma_g;\Z/L)$,
it follows that $\Sp_{2g}(\Z,L)$ acts without rotations on $\Bases(g,0)$.  The proposition follows.
\end{proof}

We conclude that Proposition \ref{proposition:cnosepmodlcon} is equivalent to a
special case of the following proposition, whose proof is in \S \ref{section:laxisobases}.  This
proposition is related to a theorem of Charney \cite[Theorem 2.9]{CharneyVogtmann}, and the proof
is a variant of the proof of \cite[Proposition 6.14, conclusion 2]{PutmanInfinite}.

\begin{proposition}
\label{proposition:basesconnected}
Fix $g \geq 1$ and $L \geq 2$ and $0 \leq k \leq g$.  
Let $\{a_1,b_1,\ldots,a_g,b_g\}$ be a symplectic basis for $\HH_1(\Sigma_g;\Z/L)$.
Set $W = \Span{a_1,b_1,\ldots,a_{g-1},b_{g-1},a_g}$.  Also, set
$\Delta^k = \{\Span{a_1},\ldots,\Span{a_k}\}$ if $k \geq 1$ and $\Delta^k = \emptyset$
if $k = 0$.  Then for $-1 \leq n \leq g-k-2$, we have $\pi_n(\Bases^{\Delta^k}(g,L))=0$ and $\pi_n(\Bases^{\Delta^k,W}(g,L))=0$.
\end{proposition}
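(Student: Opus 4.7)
The plan is to prove both connectivity statements simultaneously by induction on $g-k$, using the simplicial approximation lemma (Lemma \ref{lemma:simpapprox}). The base case $n = -1$ asserts non-emptiness: since $k \leq g-1$ whenever $g-k-2 \geq -1$, the vertex $\pm a_{k+1}$ lies in both $\Bases^{\Delta^k}(g,L)$ and, because $a_{k+1} \in W$, in $\Bases^{\Delta^k,W}(g,L)$. For the inductive step, fix $X$ to be one of the two complexes, take $0 \leq n \leq g-k-2$, and use Lemma \ref{lemma:simpapprox} to represent a class in $\pi_n(X)$ by a simplicial map $f\colon S \to X$ with $S$ a combinatorial $n$-sphere. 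The goal is to build a combinatorial $(n+1)$-ball $B$ with $\partial B = S$ together with a simplicial extension $\tilde{f}\colon B \to X$. The overall target is to homotope $f$ through simplicial maps to one whose image lies in $\Star_X(\pm a_{k+1})$; since this star is a cone, extension is then immediate.

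The main technical input is a \emph{link lemma}: for every vertex $\pm v$ of $X$, the link $\Link_X(\pm v)$ is $(g-k-3)$-connected. Unpacking, after completing $\{\Span{a_1}, \ldots, \Span{a_k}, \pm v\}$ to a lax isotropic basis and changing symplectic coordinates, this link is identified with the complex appearing in the inductive hypothesis at parameter $(g, k+1)$ (and for the $W$-version with a correspondingly translated hyperplane $W'$). The inductive hypothesis then supplies the required connectivity $(g-k-3) = (n-1)$, exactly what downstream link arguments demand.

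Given the link lemma, the ball filling is produced by a standard \emph{bad-simplex} argument. Call a simplex $\sigma$ of $S$ bad if $f(\sigma) \cup \{\pm a_{k+1}\}$ is not a simplex of $X$. Pick a maximal bad simplex $\sigma$; its link $\Link_S(\sigma)$ is a combinatorial sphere of dimension $n - \Dim(\sigma) - 1$, and $f$ sends this link into $\Link_X(f(\sigma))$. The link lemma, applied at this smaller dimension, produces a combinatorial ball filling whose boundary maps into the link and whose new interior vertices all map to $\pm a_{k+1}$. Attaching this ball and coning off $\sigma$ strictly reduces the collection of bad simplices; iterating eventually yields an $f$ whose image lies in $\Star_X(\pm a_{k+1})$, and coning to the apex produces $\tilde{f}$.

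The hard part will be the link lemma in the $W$-restricted case: introducing a new vertex $\pm v$ (with $v \in W$) to the flag $\Delta^k$ and then passing to the symplectic reduction by $\Span{a_1, \ldots, a_k, v}$ must leave a hyperplane $W'$ in the quotient of the same codimension-one form demanded by the inductive hypothesis. The correct normalization depends on whether the symplectic dual of $v$ is still available in $W$ or has been lost along with $b_g$, and the argument must branch accordingly. This linear-algebraic bookkeeping — paralleling the proof of \cite[Proposition 6.14]{PutmanInfinite} — is the technical crux and is the reason one cannot avoid carrying the $W$-version of the statement simultaneously through the induction.
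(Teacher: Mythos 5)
Your proposal does not match what the paper actually does, and it contains a step that cannot work as stated.

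The paper also runs a ``remove the worst simplex, then fill its link'' argument, but the mechanism for ensuring progress is fundamentally different from yours. The paper introduces a $\rho$-rank function on lax primitive vectors (with $\rho = a_g$ or $b_g$ depending on which of the two complexes is being treated) and reduces the maximum rank $R$ of image vertices. After filling $\Link_S(\Delta')$ by a ball $B$ mapping into $\Link_{C^{\Delta^k}}(\phi(\Delta'))$, it does \emph{not} ask that the new vertices hit any prescribed target; instead it \emph{translates} each new image vertex $v_x$ to $v_x + q_x v$, where $v$ is a vertex of $\phi(\Delta')$ with $\rho$-coordinate $R$ and $q_x$ is chosen by the division algorithm so that $\Rank^{\rho}(v_x + q_x v) < R$. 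The translate stays in $\Link_{C^{\Delta^k}}(\phi(\Delta'))$ because $v$ is already orthogonal to everything in the link and lies in the isotropic subspace spanned by $\phi(\Delta')$. This ``shift by the bad vector'' trick is what makes the induction close.

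Your version tries instead to retract the sphere into $\Star_X(\pm a_{k+1})$, declaring $\sigma$ bad when $f(\sigma) \cup \{\pm a_{k+1}\}$ is not a simplex, and asserting that the link lemma ``produces a combinatorial ball filling whose \ldots\ new interior vertices all map to $\pm a_{k+1}$.'' This cannot be right: the ball produced by connectivity of $\Link_X(f(\sigma))$ maps into $\Link_X(f(\sigma))$, and since $\sigma$ is bad, $\pm a_{k+1}$ is \emph{not} a vertex of $\Link_X(f(\sigma))$. So the new interior vertices certainly do not all map to $\pm a_{k+1}$, and more seriously, there is nothing to stop the newly introduced vertices from again being incompatible with $\pm a_{k+1}$. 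To make a fixed-target bad-simplex argument work here you would need the connectivity of the \emph{good} link (the subcomplex of $\Link_X(f(\sigma))$ consisting of vertices compatible with $\pm a_{k+1}$), which is a stronger statement than the plain link lemma supplies and which your inductive hypothesis does not directly address. Unlike rank, ``compatibility with $\pm a_{k+1}$'' is not something you can arrange post-hoc by shifting a vertex by a multiple of something in $\phi(\sigma)$, because such shifts preserve orthogonality to $\phi(\sigma)$ but have no reason to fix up pairing with $a_{k+1}$.

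Two smaller points. First, the paper's induction is on the homotopy dimension $n$ with $k'$ ranging freely in the hypothesis, not on $g-k$; the two bookkeeping schemes are close in spirit but you should state yours so that the filled ball's dimension $n-m$ genuinely falls under the hypothesis. Second, you correctly flag that identifying $\Link_{C^{\Delta^k}}(\phi(\Delta'))$ with a complex of the same type (especially in the $W$-restricted case, where one must produce a symplectic change of coordinates taking the enlarged flag to $\Delta^{k+m'}$ while carrying $W$ to a hyperplane of the same shape) requires care; the paper is also terse on this point, so your instinct there is sound, but it is not what breaks the argument.
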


\subsubsection{The complex of lax isotropic bases}
\label{section:laxisobases}

We will need the following definition.

\begin{definition}
Assume that a symplectic basis $\{a_1,b_1,\ldots,a_g,b_g\}$ for $\HH_1(\Sigma_g;\Z/L)$ has been fixed and that
$\rho \in \{a_1,b_1,\ldots,a_g,b_g\}$.  Consider a lax primitive vector $\pm v$ in $\HH_1(\Sigma_g;\Z/L)$.
Express $v$ as $\sum (c_{a_i} a_i + c_{b_i} b_i)$ with $c_{a_i},c_{b_i} \in \Z/L$
for $1 \leq i \leq g$.  Letting $|x|$ for $x \in \Z/L$ be the unique integer representing
$x$ with $0 \leq |x| < L$, we define the {\em $\rho$-rank} of $\pm v$ to equal 
$\min\{|c_{\rho}|,|-c_{\rho}|\}$.  We will denote the $\rho$-rank of $\pm v$ by $\Rank^{\rho}(\pm v)$.
\end{definition}

\begin{proof}[{Proof of Proposition \ref{proposition:basesconnected}}]
Let $C^{\Delta^k}$ be $\Bases^{\Delta^k,W}(g,L)$ or $\Bases^{\Delta^k}(g,L)$.  We must prove
that $\pi_n(C^{\Delta^k}) = 0$ for $-1 \leq n \leq g-k-2$.  In the course of our proof, we will use the
case of $C^{\Delta^k} = \Bases^{\Delta^k,W}(g,L)$ to deal with the case of $C^{\Delta^k} = \Bases^{\Delta^k}(g,L)$; the
reader will easily verify that no circular reasoning is involved.  The proof will be by
induction on $n$.  The base case $n=-1$ is equivalent to the observation that if $k < g$, then
both $\Bases^{\Delta^k,W}(g,L)$ and $\Bases^{\Delta^k}(g,L)$ are nonempty.
Assume now that $0 \leq n \leq g-k-2$ and that
$\pi_{n'}(\Bases^{\Delta^{k'},W}(g,L)) = \pi_{n'}(\Bases^{\Delta^{k'}}(g,L))=0$ for all
$0 \leq k' < g$ and $-1 \leq n' \leq g-k'-2$ such that $n' < n$.  Let $S$ be a combinatorial $n$-sphere
and let $\phi : S \rightarrow C^{\Delta^k}$ be a simplicial map.  By Lemma \ref{lemma:simpapprox},
it is enough to show that $\phi$ may be homotoped to a constant map.
Let $\rho$ equal $a_g$ if $C^{\Delta^k} = \Bases^{\Delta^k,W}(g,L)$ and $b_g$ if $C^{\Delta^k} = \Bases^{\Delta^k}(g,L)$.  Set
$$R = \Max \{\text{$\Rank^{\rho}(\phi(x))$ $|$ $x \in S^{(0)}$}\}.$$
If $R = 0$ and $C^{\Delta^k} = \Bases^{\Delta^k,W}(g,L)$, then $\phi(S) \subset \Star_{\Bases^{\Delta^k,W}(g,L)}(\pm a_g)$,
and hence the map $\phi$ can be homotoped to the constant map $\pm a_g$.  If
$R = 0$ and $C^{\Delta^k} = \Bases^{\Delta^k}(g,L)$, then $\phi(S) \subset \Bases^{\Delta^k,W}(g,L)$, and
hence by the $C^{\Delta^k} = \Bases^{\Delta^k,W}(g,L)$ case we can homotope $\phi$ to a constant map.

Assume, therefore, that $R>0$.  Let $\Delta'$ be a simplex of $S$ such that 
$\Rank^{\rho}(\phi(x))=R$ for all vertices $x$ of $\Delta'$.  Choose
$\Delta'$ so that $m := \Dim(\Delta')$ is maximal, which implies that $\Rank^{\rho}(\phi(x)) < R$
for all vertices $x$ of $\Link_S(\Delta')$.  Now, $\Link_S(\Delta')$ is a combinatorial $(n-m-1)$-sphere and
$\phi(\Link_S(\Delta'))$ is contained in
$$\Link_{C^{\Delta^k}}(\phi(\Delta')) \cong C^{\Delta^{k+m'}}$$
for some $m' \leq m$ (it may be less than $m$ if $\phi|_{\Delta'}$ is not injective).  The inductive hypothesis together
with Lemma \ref{lemma:simpapprox} therefore tells us that there a combinatorial $(n-m)$-ball $B$ with
$\partial B = \Link_S(\Delta')$ and a simplicial map $f : B \rightarrow \Link_{C^{\Delta^k}}(\phi(\Delta'))$ such that
$f|_{\partial B} = \phi|_{\Link_S(\Delta')}$.

Our goal now is to adjust $f$ so that $\Rank^{\rho}(\phi(x)) < R$ for all $x \in B^{(0)}$.
Let $v \in \HH_1(\Sigma_g;\Z/L)$ be a vector whose $\rho$-coordinate equals $R$ modulo $L$ such that 
$\pm v$ is a vertex in $\phi(\Delta')$.
We define a map $f' : B \rightarrow \Link_{C^{\Delta^k}}(\phi(\Delta'))$ in the following way.  Consider
$x \in B^{(0)}$.  Let $v_x \in \HH_1(\Sigma_g;\Z/L)$ be a vector with $f(x) = \pm v_x$ whose $\rho$-coordinate equals
$\Rank^{\rho}(f(x))$ modulo $L$.  
By the division algorithm, there exists some $q_x \in \Z/L$
such that $\Rank^{\rho}(\pm(v_x + q_x v)) < R$.  Moreover, by the maximality of $m$ we can choose $q_x$
such that $q_x = 0$ if $x \in (\partial B)^{(0)}$.  
Define $f'(x) = \pm(v_x + q_x v)$.  It is clear that the map
$f'$ extends to a map $f' : B \rightarrow \Link_{C^{\Delta^k}}(\phi(\Delta'))$.  Additionally,
$f'|_{\partial B} = f|_{\partial B} = \phi|_{\Link_S(\Delta')}$.
We conclude that we can homotope $\phi$ so as to replace $\phi|_{\Star_S(\Delta')}$ with $f'$.  Since
$\Rank^{\rho}(f'(x)) < R$ for all $x \in B$, we have removed $\Delta'$ from $S$ without introducing any
vertices whose images have $\rho$-rank greater than or equal to $R$.  Continuing in this manner allows
us to simplify $\phi$ until $R=0$, and we are done.
\end{proof}

\section{Proof of the weak stability theorem}
\label{section:weakstability}

In this section, we prove Proposition \ref{proposition:weakstability} (the weak stability theorem).
Our main tool 
will be Theorem \ref{theorem:birmanexactsequencel2}, which we recall gives a split exact sequence
\begin{equation}
\label{eqn:levelexactsequence}
1 \longrightarrow \overline{K}_{g-1,1} \longrightarrow \Mod_{g,\gamma}(L) \longrightarrow \Mod_{g-1,1}(L) \longrightarrow 1.
\end{equation}
Here $\overline{K}_{g-1,1}$ fits into an exact sequence
$$1 \longrightarrow \Z \longrightarrow \overline{K}_{g-1,1} \longrightarrow K_{g-1,1} \longrightarrow 1,$$
where
$$K_{g-1,1} \cong \Ker(\pi_1(\Sigma_{g-1,1}) \longrightarrow \HH_1(\Sigma_{g-1,1},\Z/L))$$
and where $\Z$ is generated by $T_{\beta}^L$ with $\beta$ one of the boundary curves of $\Sigma_{g,\gamma}$.

We will need the following lemma, which is an easy consequence of the Hochschild-Serre spectral sequence.

\begin{lemma}
\label{lemma:h2decomplemma}
If
$$1 \longrightarrow A \longrightarrow B \longrightarrow C \longrightarrow 1$$
is a split exact sequence of groups, then there is an un-natural isomorphism
$\HH_2(B;\Q) \cong \HH_2(C;\Q) \oplus \HH_1(C;\HH_1(A;\Q)) \oplus D$,
where $D \cong \Image(\HH_2(A;\Q) \rightarrow \HH_2(B;\Q))$.
\end{lemma}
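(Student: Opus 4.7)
The plan is to run the Hochschild--Serre spectral sequence for the extension $1 \to A \to B \to C \to 1$ with constant coefficients $\Q$, which has $E^2_{p,q} = \HH_p(C;\HH_q(A;\Q))$ and converges to $\HH_{p+q}(B;\Q)$, and then to read off the three relevant terms in total degree $2$.

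First, because the sequence splits and $\Q$ is a trivial $B$-module, the discussion of the Hochschild--Serre spectral sequence in \S\ref{section:preliminaries} guarantees that every differential emanating from the bottom row $E^{r}_{\ast,0}$ vanishes. In particular, $d_2 : E^2_{2,0} \to E^2_{0,1}$ and $d_2 : E^2_{3,0} \to E^2_{1,1}$ are both zero.

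Second, I would identify the three $E^{\infty}$-terms with $p+q = 2$:
\begin{itemize}
\item $E^{\infty}_{2,0} = E^2_{2,0} = \HH_2(C;\Q)$, since the only outgoing differential lies on the bottom row;
\item $E^{\infty}_{1,1} = E^2_{1,1} = \HH_1(C;\HH_1(A;\Q))$, since the outgoing differential lands in $E^2_{-1,2} = 0$ and the incoming one comes from $E^2_{3,0}$ on the bottom row;
\item $E^{\infty}_{0,2} = \Image(\HH_2(A;\Q) \to \HH_2(B;\Q)) = D$, by the edge-group interpretation recalled in \S\ref{section:preliminaries}.
\end{itemize}

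Finally, these three terms are the associated graded pieces of a filtration $0 \subset F_0 \subset F_1 \subset F_2 = \HH_2(B;\Q)$ with $F_0 \cong D$, $F_1/F_0 \cong \HH_1(C;\HH_1(A;\Q))$, and $F_2/F_1 \cong \HH_2(C;\Q)$. Since all groups in sight are $\Q$-vector spaces, every such extension splits (un-naturally), and the filtration collapses to the stated direct sum decomposition. There is no real obstacle here; the only point that requires the split hypothesis is the vanishing of $d_2 : E^2_{3,0} \to E^2_{1,1}$, which is precisely what the split/trivial-coefficient observation provides, and the un-naturality in the statement simply reflects the un-naturality of splitting a filtration of $\Q$-vector spaces.
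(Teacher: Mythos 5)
Your proof is correct and matches the paper's intended argument: the paper simply remarks that the lemma ``is an easy consequence of the Hochschild--Serre spectral sequence,'' and the spectral-sequence computation you carry out (using the split hypothesis to kill the bottom-row differentials, the edge-term identification for $E^\infty_{0,2}$, and the fact that filtrations of $\Q$-vector spaces split) is exactly that argument made explicit.
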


Let $C_{g,1}$ be the kernel of the natural map $\HH_1(K_{g,1};\Q) \rightarrow \HH_1(\Sigma_{g,1};\Q)$.  In
\cite[\S 5.1]{PutmanVanishing}, the author proved two things.  First, there is a $\Mod_{g,1}$-equivariant splitting
$\HH_1(K_{g,1};\Q) \cong \HH_1(\Sigma_{g,1};\Q) \oplus C_{g,1}$.
Second, we have $\HH_1(\Mod_{g,1}(L);C_{g,1}) = 0$ for $g \geq 4$ and
$L \geq 2$.  We thus obtain the following result.

\begin{lemma}
\label{lemma:vanishing}
For $g \geq 4$ and $L \geq 2$, the natural map 
$$\HH_1(\Mod_{g,1}(L);\HH_1(K_{g,1};\Q)) \longrightarrow \HH_1(\Mod_{g,1}(L);\HH_1(\Sigma_{g,1};\Q))$$
is an isomorphism.
\end{lemma}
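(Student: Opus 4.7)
The plan is to reduce the statement directly to the two facts already quoted from \cite[\S 5.1]{PutmanVanishing}. Since $K_{g,1}$ is characteristic in $\pi_1(\Sigma_{g,1})$, the inclusion induces a $\Mod_{g,1}$-equivariant map $\HH_1(K_{g,1};\Q) \to \HH_1(\Sigma_{g,1};\Q)$, and by definition $C_{g,1}$ is its kernel. Thus we have a short exact sequence of $\Mod_{g,1}$-modules
$$0 \longrightarrow C_{g,1} \longrightarrow \HH_1(K_{g,1};\Q) \longrightarrow \HH_1(\Sigma_{g,1};\Q) \longrightarrow 0,$$
and the natural map of the lemma is exactly the map on $\HH_1(\Mod_{g,1}(L);-)$ induced by its right-hand surjection.

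First, I would invoke the $\Mod_{g,1}$-equivariant splitting $\HH_1(K_{g,1};\Q) \cong \HH_1(\Sigma_{g,1};\Q) \oplus C_{g,1}$ from \cite[\S 5.1]{PutmanVanishing} to conclude that the short exact sequence above is split as a sequence of $\Mod_{g,1}(L)$-modules. Since group homology with rational coefficients is additive in the coefficient module, this yields a direct-sum decomposition
$$\HH_1(\Mod_{g,1}(L);\HH_1(K_{g,1};\Q)) \cong \HH_1(\Mod_{g,1}(L);\HH_1(\Sigma_{g,1};\Q)) \oplus \HH_1(\Mod_{g,1}(L);C_{g,1}),$$
under which the map in the lemma is identified with projection onto the first summand.

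Second, I would apply the vanishing $\HH_1(\Mod_{g,1}(L);C_{g,1}) = 0$, also proved in \cite[\S 5.1]{PutmanVanishing} under the hypotheses $g \geq 4$ and $L \geq 2$. This kills the second summand, making the projection — and hence the natural map of the lemma — an isomorphism.

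The principal obstacle is therefore not in the present lemma at all but rather in the two imported inputs from \cite{PutmanVanishing}: the existence of the equivariant splitting and, more substantively, the vanishing $\HH_1(\Mod_{g,1}(L);C_{g,1}) = 0$, whose proof requires a genuine analysis of the $\Mod_{g,1}(L)$-action on the homology of the universal mod-$L$ abelian cover of $\Sigma_{g,1}$. Granted those two facts, the argument here is purely formal.
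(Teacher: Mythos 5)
Your argument is exactly the one the paper has in mind: it records the two facts from \cite[\S 5.1]{PutmanVanishing} — the $\Mod_{g,1}$-equivariant splitting $\HH_1(K_{g,1};\Q) \cong \HH_1(\Sigma_{g,1};\Q) \oplus C_{g,1}$ and the vanishing $\HH_1(\Mod_{g,1}(L);C_{g,1}) = 0$ — immediately before the lemma and then treats the lemma as a direct consequence, which is precisely the additivity-plus-vanishing deduction you carry out. Your proposal is correct and matches the paper's reasoning.
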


We now prove the following lemma.

\begin{lemma}
\label{lemma:weakstabilityvanish}
If $g \geq 4$ and $L \geq 2$, then $\HH_1(\Mod_{g,1}(L);\HH_1(\overline{K}_{g,1};\Q)) = 0$.
\end{lemma}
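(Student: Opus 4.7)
The plan is to exhibit a $\Mod_{g,1}(L)$-equivariant short exact sequence
$$0 \longrightarrow C_{g,1} \longrightarrow \HH_1(\overline{K}_{g,1};\Q) \longrightarrow \HH_1(U\Sigma_{g,1};\Q) \longrightarrow 0$$
and then to conclude via the associated long exact sequence together with the two vanishing results already established in this section: namely $\HH_1(\Mod_{g,1}(L); C_{g,1}) = 0$ from the discussion preceding Lemma \ref{lemma:vanishing}, and Lemma \ref{lemma:h1modluh}.

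To produce this sequence, I would first identify $\overline{K}_{g,1}$ as a subgroup of $\pi_1(U\Sigma_{g,1})$. Since $\Sigma_{g,1}$ has nonempty boundary, the unit tangent bundle is trivial and there is a split central extension $1 \to \Z \to \pi_1(U\Sigma_{g,1}) \to \pi_1(\Sigma_{g,1}) \to 1$ with fiber generated by $T_{\beta}$. Theorem \ref{theorem:birmanexactsequencel2} provides the analogous split central extension $1 \to \Z \to \overline{K}_{g,1} \to K_{g,1} \to 1$ whose fiber is generated by $T_{\beta}^L$; the inclusion $\overline{K}_{g,1} \hookrightarrow \pi_1(U\Sigma_{g,1})$ respects both extensions, acting on fibers by multiplication by $L$ and on quotients as the standard inclusion $K_{g,1} \hookrightarrow \pi_1(\Sigma_{g,1})$. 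Because $T_{\beta}$ is central in the ambient mapping class group, both central $\Z$ subgroups are $\Mod_{g,1}(L)$-invariant, and so every map in sight is $\Mod_{g,1}(L)$-equivariant.

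Taking $\HH_1(\cdot;\Q)$ of these split central extensions then yields a commutative diagram of $\Mod_{g,1}(L)$-equivariant short exact sequences
$$\begin{CD}
0 @>>> \Q @>>> \HH_1(\overline{K}_{g,1};\Q) @>>> \HH_1(K_{g,1};\Q) @>>> 0 \\
@. @V{\cdot L}VV @VVV @VVV @.\\
0 @>>> \Q @>>> \HH_1(U\Sigma_{g,1};\Q) @>>> \HH_1(\Sigma_{g,1};\Q) @>>> 0
\end{CD}$$
in which the left vertical map is an isomorphism and the right vertical map is the canonical surjection whose kernel, by definition, is $C_{g,1}$. The snake lemma then delivers the advertised short exact sequence of $\Mod_{g,1}(L)$-modules.

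The associated long exact sequence in $\Mod_{g,1}(L)$-homology contains the segment
$$\HH_1(\Mod_{g,1}(L); C_{g,1}) \longrightarrow \HH_1(\Mod_{g,1}(L); \HH_1(\overline{K}_{g,1};\Q)) \longrightarrow \HH_1(\Mod_{g,1}(L); \HH_1(U\Sigma_{g,1};\Q)),$$
whose outer terms both vanish by the results cited above, forcing the middle term to vanish as desired. I expect the main conceptual obstacle to be simply choosing the right comparison: if one naively used the sequence $0 \to \Q \to \HH_1(\overline{K}_{g,1};\Q) \to \HH_1(K_{g,1};\Q) \to 0$ coming directly from the central extension structure of $\overline{K}_{g,1}$, then the quotient term would have nonvanishing $\HH_1(\Mod_{g,1}(L);\cdot)$ equal to $\Q$ by Lemmas \ref{lemma:vanishing} and \ref{lemma:h1modlh}. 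Comparing instead with $\pi_1(U\Sigma_{g,1})$ trades that nonvanishing term for the kernel $C_{g,1}$, for which \cite{PutmanVanishing} provides exactly the vanishing needed.
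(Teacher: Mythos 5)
Your proof is correct, and it takes a genuinely different route from the paper's, even though both begin with the same commutative diagram of short exact sequences obtained by abelianizing the central extensions for $\overline{K}_{g,1}$ and $\pi_1(U\Sigma_{g,1})$. The paper then compares the long exact sequences in $\Mod_{g,1}(L)$-homology (for the top row) and $\Mod_{g,1}$-homology (for the bottom row), and needs to argue that the map $f_1$ is an isomorphism by factoring it as $f_1'' \circ f_1'$, invoking Lemma~\ref{lemma:vanishing} for $f_1'$ and Theorem~\ref{theorem:hainh1h} together with the transfer (Lemma~\ref{lemma:transfer}) for $f_1''$. You instead apply the snake lemma to the coefficient diagram itself: since the left vertical map is multiplication by $L$ on $\Q$ (an isomorphism) and the right vertical map is surjective with kernel $C_{g,1}$ by definition, the snake lemma yields the $\Mod_{g,1}(L)$-equivariant short exact sequence $0 \to C_{g,1} \to \HH_1(\overline{K}_{g,1};\Q) \to \HH_1(U\Sigma_{g,1};\Q) \to 0$, and then a single application of the long exact sequence in $\Mod_{g,1}(L)$-homology gives the result from $\HH_1(\Mod_{g,1}(L);C_{g,1})=0$ and Lemma~\ref{lemma:h1modluh}. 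This buys you a cleaner argument: you never leave $\Mod_{g,1}(L)$-coefficient homology, you avoid the transfer entirely, and you replace the diagram-chase factorization of $f_1$ by a purely module-theoretic snake lemma. The two essential external inputs are identical in both arguments, namely the vanishing of $\HH_1(\Mod_{g,1}(L);C_{g,1})$ from \cite{PutmanVanishing} and the unit-tangent-bundle vanishing of Lemma~\ref{lemma:h1modluh}; the paper applies the latter with $L=1$ while you apply it with $L\geq 2$, but the lemma covers all $L \geq 1$. Your closing remark correctly explains why the naive use of only the top row fails: the quotient coefficients $\HH_1(K_{g,1};\Q)$ contribute a nonvanishing $\Q$ via Lemmas~\ref{lemma:vanishing} and \ref{lemma:h1modlh}, and the comparison with $\pi_1(U\Sigma_{g,1})$ is precisely what trades that $\Q$ away.
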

\begin{proof}
We have a commutative diagram of central extensions
$$\begin{CD}
1 @>>> \Z   @>>> \overline{K}_{g,1}               @>>> K_{g,1}                       @>>> 1  \\
@.     @VVV      @VVV                              @VVV                      @. \\
1 @>>> \Z   @>>> \pi_1(U\Sigma_{g,1})   @>>> \pi_1(\Sigma_{g,1}) @>>> 1
\end{CD}$$
Here the left hand vertical map $\Z \rightarrow \Z$ is multiplication by $L$ and all the vertical maps
are injections.  This induces a map between the associated 5-term exact sequences,
but since $\pi_1(\Sigma_{g,1})$ and $K_{g,1}$ are free
this degenerates into a commutative diagram
\begin{equation}
\label{eqn:h1kh1usdiagram}
\begin{CD}
0 @>>> \Q   @>>> \HH_1(\overline{K}_{g,1};\Q)             @>>> \HH_1(K_{g,1};\Q)                       @>>> 0  \\
@.     @VV{\cong}V      @VVV                                      @VVV                                     @. \\
0 @>>> \Q   @>>> \HH_1(U\Sigma_{g,1};\Q) @>>> \HH_1(\Sigma_{g,1};\Q) @>>> 0
\end{CD}
\end{equation}
Here the top row is a short exact sequence of $\Mod_{g,1}(L)$-modules, the bottom row is a short
exact sequence of $\Mod_{g,1}$-modules, and the vertical arrows are equivariant with respect to the
inclusion map $\Mod_{g,1}(L) \hookrightarrow \Mod_{g,1}$.  The top (resp. bottom) short exact sequence
in \eqref{eqn:h1kh1usdiagram} induces a long exact sequence in $\Mod_{g,1}(L)$ (resp. $\Mod_{g,1}$) homology, and we get
an induced map between these two long exact sequences.

Theorem \ref{theorem:hainh1} says that $\HH_1(\Mod_{g,1}(L);\Q) = 0$.  Lemma \ref{lemma:h1modluh} 
says that $\HH_1(\Mod_{g,1};\HH_1(U\Sigma_{g,1};\Q)) = 0$.
A portion of the map between long exact sequences arising from \eqref{eqn:h1kh1usdiagram} thus looks like the following.
$$\minCDarrowwidth19pt\begin{CD}
0 @>>> \HH_1(\Mod_{g,1}(L);\HH_1(\overline{K}_{g,1};\Q)) @>>> \HH_1(\Mod_{g,1}(L);\HH_1(K_{g,1};\Q))          @>{f_2}>> \Q \\
@.     @VVV                                                   @VV{f_1}V                                                 @VV{\cong}V\\
@.     0                                                 @>>> \HH_1(\Mod_{g,1};\HH_1(\Sigma_{g,1};\Q)) @>>>      \Q
\end{CD}$$
The $\Q$'s on the right hand side of this diagram come from the $H_0$ terms.

The map $f_1$ factors as
$$\HH_1(\Mod_{g,1}(L);\HH_1(K_{g,1};\Q)) \stackrel{f_1'}{\longrightarrow} \HH_1(\Mod_{g,1}(L);\HH_1(\Sigma_{g,1};\Q)) 
\stackrel{f_1''}{\longrightarrow} \HH_1(\Mod_{g,1};\HH_1(\Sigma_{g,1};\Q)).$$
Lemma \ref{lemma:vanishing} says that $f_1'$ is an isomorphism, and Theorem \ref{theorem:hainh1h} together
with Lemma \ref{lemma:transfer} implies
that $f_1''$ is an isomorphism.  We deduce that $f_1$ is an isomorphism.
This implies that $f_2$ is an injection, and hence that
$$\HH_1(\Mod_{g,1}(L);\HH_1(\overline{K}_{g,1};\Q))=0,$$ 
as desired.
\end{proof}

\noindent
We now commence with the proof of Proposition \ref{proposition:weakstability}.

\begin{proof}[{Proof of Proposition \ref{proposition:weakstability}}]
Let $\beta$, $K_{g-1,1}$ and $\overline{K}_{g-1,1}$ be as in Theorem \ref{theorem:birmanexactsequencel2}.  Then Theorem
\ref{theorem:birmanexactsequencel2} together with Lemmas \ref{lemma:weakstabilityvanish} and \ref{lemma:h2decomplemma}
imply that
$$\HH_2(\Mod_{g,\gamma}(L);\Q) \cong \HH_2(\Mod_{g-1,1}(L);\Q) \oplus X,$$
where $X = \Image(\HH_2(\overline{K}_{g-1,1};\Q) \rightarrow \HH_2(\Mod_{g,\gamma}(L);\Q))$.  We must
prove that $X = 0$.  Since $T_{\beta}^L$ is central in both $\overline{K}_{g-1,1}$ and $\Mod_{g,\gamma}(L)$, we
have a commutative diagram of central extensions
$$\begin{CD}
1  @>>> \Z @>>> \overline{K}_{g-1,1}           @>>> K_{g-1,1}                      @>>> 1 \\
@.      @|      @VVV                        @VVV                        @.\\
1  @>>> \Z @>>> \Mod_{g,\gamma}(L) @>>> (\Mod_{g,\gamma}(L))/{\Z} @>>> 1
\end{CD}$$
Here the central $\Z$'s are generated by $T_{\beta}^L$.  Since the map 
$\Mod_{g,\gamma}(L) \rightarrow (\Mod_{g,\gamma}(L))/{\Z}$ is a surjection, we have a surjection
$\HH_1(\Mod_{g,\gamma}(L);\Q) \longrightarrow \HH_1((\Mod_{g,\gamma}(L))/{\Z};\Q)$.
Thus Lemma \ref{lemma:h1modlstab} implies that $\HH_1((\Mod_{g,\gamma}(L))/{\Z};\Q)=0$.
Since $K_{g-1,1}$ is free, the map of Gysin sequences associated to the above commutative diagram of central extensions
contains the commutative diagram of exact sequences
$$\begin{CD}
\HH_1(K_{g-1,1};\Q) @>>> \HH_2(\overline{K}_{g-1,1};\Q) @>>> 0    \\
@VVV                     @VVV                                @VVV      \\
0                   @>>> \HH_2(\Mod_{g,\gamma}(L);\Q)   @>>> \HH_2((\Mod_{g,\gamma}(L))/{\Z};\Q)
\end{CD}$$
An easy diagram chase establishes that the map $\HH_2(\overline{K}_{g-1,1};\Q) \rightarrow \HH_2(\Mod_{g,\gamma}(L);\Q)$
is the zero map, i.e.\ that $X=0$, as desired.
\end{proof}

\noindent
Department of Mathematics; MIT, 2-306 \\
77 Massachusetts Avenue \\
Cambridge, MA 02139-4307 \\
E-mail: {\tt andyp@math.mit.edu}
\medskip


\begin{thebibliography}{}
\begin{small}
\setlength{\itemsep}{1pt}

\bibitem{AtiyahSpin}
M. F. Atiyah, 
Riemann surfaces and spin structures, 
Ann. Sci. \'Ecole Norm. Sup. (4) {\bf 4} (1971), 47--62.

\bibitem{BassMilnorSerre}
H. Bass, J. Milnor\ and\ J.-P. Serre,
Solution of the congruence subgroup problem
for ${\rm SL}\sb{n}\,(n\geq 3)$ and ${\rm Sp}\sb{2n}\,(n\geq 2)$,
Inst. Hautes \'Etudes Sci. Publ. Math. No. 33 (1967), 59--137.

\bibitem{BirmanSiegel}
J. S. Birman, 
On Siegel's modular group, 
Math. Ann. {\bf 191} (1971), 59--68.

\bibitem{BorelDensity}
A. Borel,
Density properties for certain subgroups of semi-simple groups without compact components,
Ann. of Math. (2) {\bf 72} (1960), 179--188.

\bibitem{BorelStability1}
A. Borel, 
Stable real cohomology of arithmetic groups, 
Ann. Sci. \'Ecole Norm. Sup. (4) {\bf 7} (1974), 235--272 (1975).

\bibitem{BorelStability2}
A. Borel, 
Stable real cohomology of arithmetic groups. II, 
in {\it Manifolds and Lie groups (Notre Dame, Ind., 1980)}, 
21--55, Progr. Math., 14, Birkh\"auser, Boston, Mass.

\bibitem{BrownCohomology}
K. S. Brown,
{\it Cohomology of groups},
Corrected reprint of the 1982 original, Springer, New York, 1994.

\bibitem{CharneyCongruence}
R. Charney, 
On the problem of homology stability for congruence subgroups, 
Comm. Algebra {\bf 12} (1984), no.~17-18, 2081--2123.

\bibitem{CharneyVogtmann}
R. Charney, 
A generalization of a theorem of Vogtmann, 
J. Pure Appl. Algebra {\bf 44} (1987), no.~1-3, 107--125.

\bibitem{FarbProblems}
B. Farb, 
Some problems on mapping class groups and moduli space, 
in {\it Problems on mapping class groups and related topics}, 
11--55, Proc. Sympos. Pure Math., 74, Amer. Math. Soc., Providence, RI.

\bibitem{FarbMargalitSurvey}
B. Farb\ and\ D. Margalit,
A primer on mapping class groups,
preprint.

\bibitem{Foisy}
J. Foisy, 
The second homology group of the level $2$ mapping class group and extended Torelli group of an orientable surface, 
Topology {\bf 38} (1999), no.~6, 1175--1207.

\bibitem{GalatiusSpin}
S. Galatius, 
Mod $2$ homology of the stable spin mapping class group, 
Math. Ann. {\bf 334} (2006), no.~2, 439--455. 

\bibitem{HainTorelli}
R. M. Hain, 
Torelli groups and geometry of moduli spaces of curves, 
in {\it Current topics in complex algebraic geometry (Berkeley, CA, 1992/93)}, 
97--143, Cambridge Univ. Press, Cambridge.

\bibitem{HarerH2}
J. Harer,
The second homology group of the mapping class group of an orientable surface,
Invent. Math. {\bf 72} (1983), no.~2, 221--239.

\bibitem{HarerStability}
J. L. Harer, 
Stability of the homology of the mapping class groups of orientable surfaces, 
Ann. of Math. (2) {\bf 121} (1985), no.~2, 215--249.

\bibitem{HarerStableSpin}
J. L. Harer, 
Stability of the homology of the moduli spaces of Riemann surfaces with spin structure, 
Math. Ann. {\bf 287} (1990), no.~2, 323--334. 

\bibitem{HarerH2Spin}
J. L. Harer, 
The rational Picard group of the moduli space of Riemann surfaces with spin structure, 
in {\it Mapping class groups and moduli spaces of Riemann surfaces (G\"ottingen, 1991/Seattle, WA, 1991)}, 
107--136, Contemp. Math., 150, Amer. Math. Soc., Providence, RI.

\bibitem{IvanovBook}
N. V. Ivanov, 
{\it Subgroups of Teichm\"uller modular groups}, 
Translated from the Russian by E. J. F. Primrose and revised by the author, 
Amer. Math. Soc., Providence, RI, 1992.

\bibitem{IvanovComm}
N. V. Ivanov, 
Automorphism of complexes of curves and of Teichm\"uller spaces, 
Internat. Math. Res. Notices {\bf 1997}, no.~14, 651--666.

\bibitem{IvanovSurvey}
N. V. Ivanov, 
Mapping class groups, 
in {\it Handbook of geometric topology}, 
523--633, North-Holland, Amsterdam.

\bibitem{JohnsonFirst}
D. L. Johnson, 
Homeomorphisms of a surface which act trivially on homology, 
Proc. Amer. Math. Soc. {\bf 75} (1979), no.~1, 119--125.

\bibitem{JohnsonSpin}
D. Johnson, 
Spin structures and quadratic forms on surfaces, 
J. London Math. Soc. (2) {\bf 22} (1980), no.~2, 365--373.

\bibitem{JohnsonFinite}
D. Johnson,
The structure of the Torelli group. I. A finite set of generators for ${\cal I}$,
Ann. of Math. (2) {\bf 118} (1983), no.~3, 423--442.

\bibitem{JohnsonAbel}
D. Johnson, 
The structure of the Torelli group. III. The abelianization of $\Torelli$, 
Topology {\bf 24} (1985), no.~2, 127--144.

\bibitem{KnudsonBook}
K. P. Knudson, 
{\it Homology of linear groups}, 
Progr. Math., 193, Birkh\"auser, Basel, 2001.

\bibitem{MadsenWeiss}
I. Madsen\ and\ M. Weiss, 
The stable moduli space of Riemann surfaces: Mumford's conjecture, 
Ann. of Math. (2) {\bf 165} (2007), no.~3, 843--941.

\bibitem{McCarthyCofinite}
J. D. McCarthy, 
On the first cohomology group of cofinite subgroups in surface mapping class groups, 
Topology {\bf 40} (2001), no.~2, 401--418.

\bibitem{MennickeCongruence}
J. Mennicke,
Zur Theorie der Siegelschen Modulgruppe,
Math. Ann. {\bf 159} (1965), 115--129.

\bibitem{PennerProblems}
R. C. Penner, 
Probing mapping class groups using arcs, 
in {\it Problems on mapping class groups and related topics}, 
97--114, Proc. Sympos. Pure Math., 74, Amer. Math. Soc., Providence, RI.

\bibitem{PowellTorelli}
J. Powell, 
Two theorems on the mapping class group of a surface, 
Proc. Amer. Math. Soc. {\bf 68} (1978), no.~3, 347--350.

\bibitem{PutmanCutPaste}
A. Putman, 
Cutting and pasting in the Torelli group, 
Geom. Topol. {\bf 11} (2007), 829--865.

\bibitem{PutmanInfinite}
A. Putman,
An infinite presentation of the Torelli group,
to appear in GAFA.

\bibitem{PutmanH1Level}
A. Putman,
The abelianization of the level $L$ mapping class group,
preprint 2008.

\bibitem{PutmanVanishing}
A. Putman,
Abelian covers of surfaces and the homology of the level $L$ mapping class group,
preprint 2009.

\bibitem{PutmanPicardGroup}
A. Putman,
The Picard group of the moduli space of curves with level structures,
preprint 2009.

\bibitem{RourkeSanderson}
C. P. Rourke\ and\ B. J. Sanderson,
{\it Introduction to piecewise-linear topology},
Reprint, Springer, Berlin, 1982.

\bibitem{Spanier}
E. H. Spanier,
{\it Algebraic topology},
Corrected reprint, Springer, New York, 1981.

\bibitem{TrappLinear}
R. Trapp, 
A linear representation of the mapping class group ${\mathcal M}$ and the theory of winding numbers, 
Topology Appl. {\bf 43} (1992), no.~1, 47--64.

\bibitem{VanDerKallen}
W. van der Kallen, 
Homology stability for linear groups, 
Invent. Math. {\bf 60} (1980), no.~3, 269--295.

\bibitem{ZeemanSpec}
E. C. Zeeman, 
A proof of the comparison theorem for spectral sequences, 
Proc. Cambridge Philos. Soc. {\bf 53} (1957), 57--62.

\bibitem{ZeemanSimp}
E. C. Zeeman,
Relative simplicial approximation,
Proc. Cambridge Philos. Soc. {\bf 60} (1964), 39--43.

\bibitem{ZimmerBook}
R. J. Zimmer, 
{\it Ergodic theory and semisimple groups}, 
Birkh\"auser, Basel, 1984.

\end{small}
\end{thebibliography}
\end{document}